\documentclass[a4paper,fleqn]{cas-sc}

\usepackage[utf8]{inputenc}
\usepackage[numbers,sort&compress]{natbib}
\usepackage{amsthm,mathtools}
\usepackage{enumitem}
\usepackage{needspace}
\usepackage{microtype}

\ExplSyntaxOn
\keys_set:nn { stm / mktitle } { nologo = true }
\ExplSyntaxOff

\hypersetup{
  pdftitle={Frobenius Profiles of Polynomial-Multiple MRD Codes},
  pdfauthor={Yangcheng Li and Pingzhi Yuan},
  pdfkeywords={MRD codes, Frobenius intersection profiles, polynomial-multiple codes, extension bounds},
  pdfdisplaydoctitle=true,
  pdflang={en-US},
  bookmarksnumbered=true,
  bookmarksopen=true,
  hidelinks
}
\numberwithin{equation}{section}
\allowdisplaybreaks
\newtheorem{theorem}{Theorem}[section]
\newtheorem{lemma}[theorem]{Lemma}
\newtheorem{proposition}[theorem]{Proposition}
\newtheorem{corollary}[theorem]{Corollary}
\theoremstyle{definition}

\newtheorem{example}[theorem]{Example}
\theoremstyle{remark}
\newtheorem{remark}[theorem]{Remark}
\newcommand{\F}{\mathbb F}

\newcommand{\GL}{\operatorname{GL}}
\newcommand{\coeff}{\operatorname{coeff}}
\newcommand{\polylcm}{\operatorname{lcm}}
\newcommand{\Gr}{\operatorname{Gr}}

\newcommand{\qbinom}[2]{\genfrac{[}{]}{0pt}{}{#1}{#2}_{q}}

\begin{document}
\let\WriteBookmarks\relax
\def\floatpagepagefraction{1}
\def\textpagefraction{.001}
\shorttitle{Frobenius Profiles of Polynomial-Multiple MRD Codes}
\shortauthors{Li and Yuan}
\title[mode=title]{Frobenius Profiles of Polynomial-Multiple MRD Codes}

\author[1]{Yangcheng Li}
\cormark[1]
\ead{liyc@m.scnu.edu.cn}
\author[1]{Pingzhi Yuan}
\ead{yuanpz@scnu.edu.cn}
\affiliation[1]{
  organization={School of Mathematical Sciences, South China Normal University},
  addressline={No. 55, West Zhongshan Avenue, Tianhe District},
  city={Guangzhou},
  postcode={510631},
  state={Guangdong},
  country={China}}
\cortext[1]{Corresponding author}

\begin{abstract}
	We study Frobenius intersection profiles of maximum rank
	distance (MRD) codes arising from ordinary polynomial
	multiples over finite fields. We prove that every numerically
	admissible profile is realized
	within this family over every sufficiently large extension,
	using a root-exponent pattern independent of the extension degree.
	A factor-orbit gap formula provides the underlying
	structural description, while effective extension-degree
	bounds guarantee realization. We also establish a lower
	bound valid for all polynomial generators and show that
	the upper and lower bounds differ by only
	$O_q(\log n)$ along an infinite family of parameters.
	Finally, we characterize the generalized Gabidulin
	members and obtain non-generalized-Gabidulin realizations
	of every profile with initial loss at least two over
	infinitely many extensions.
\end{abstract}

\begin{keywords}
	MRD codes \sep Frobenius intersection profiles
	\sep polynomial-multiple codes \sep extension bounds
{\raggedright
\MSC[2020]{Primary 94B05; Secondary 11T71, 11T06, 15B05}\par}
\end{keywords}
\maketitle
\hypersetup{
  pdfauthor={Yangcheng Li and Pingzhi Yuan},
  pdfsubject={Manuscript prepared for submission to Finite Fields and Their Applications}
}

\section{Introduction and main results}
\label{sec:introduction}

Rank-metric codes originated in the pioneering work of Delsarte and Gabidulin, whose fundamental contributions laid the groundwork for the subsequent development of rank-metric coding theory~\cite{Delsarte1978,Gabidulin1985}.
Frobenius intersections provide useful invariants for studying
the structure and equivalence of these codes.
Let $E=\mathbb F_{q^m}$ and let $C\le E^n$ be an $E$-linear
code. Write $C^{q^i}$ for the coordinatewise $q^i$-th
Frobenius image of $C$, and define
\[
d_j(C)=\dim_E\bigcap_{i=0}^j C^{q^i}
\qquad(j\ge0).
\]
The successive losses
$\delta_j=d_{j-1}(C)-d_j(C)$ measure the decrease
in dimension as further Frobenius conjugates are
included in the intersection.
These dimensions are invariant under semilinear
rank equivalence.

Neri, Puchinger and Horlemann-Trautmann
established general properties of these intersection
sequences~\cite{NPH2020}.
In particular, for a maximum rank distance (MRD)
code of dimension $k<n$ and codimension $r=n-k$,
the positive losses form a nonincreasing partition
of $k$ whose largest part is at most $r$.
This raises a natural realization question:
can every sequence satisfying these numerical
conditions occur as the Frobenius intersection
losses of an MRD code?

We answer this question within a concrete family
of polynomial-multiple codes.
For a monic polynomial $g\in E[X]$ of degree
$r<n$, put $k=n-r$ and define
\begin{equation}
	C_g(n)=
	\{\operatorname{coeff}_n(ug):
	u\in E[X],\ \deg u<k\},
	\label{eq:model-intro}
\end{equation}
where $\operatorname{coeff}_n$ records the
$n$ coefficients of a polynomial, padding
with zeros when necessary.
Multiplication is ordinary polynomial multiplication,
without reduction modulo an ambient polynomial.
Equivalently, $C_g(n)$ is the coefficient realization of
the degree-truncated principal ideal $(g)\cap E[X]_{<n}$.
Its generator matrix consists of successive shifts of the
coefficient vector of $g$ and therefore has a Toeplitz-type
structure. This combination of a concrete generator matrix
and explicit divisibility makes the family a natural setting
for Frobenius intersections. We show that it is rich enough
to realize every numerically admissible MRD profile.

Beyond existence, we investigate the extension
degree required for realization.
For fixed $q,n,r$, let
$M_{\mathrm{univ}}(q,n,r)$ be the least integer
$M\ge n$ such that every admissible profile
is realized by an MRD code $C_g(n)$ over
$\mathbb F_{q^m}$ for every integer $m\ge M$.
Here a profile is called \emph{admissible}
if it is determined by positive integers
$\delta_1,\ldots,\delta_h$ satisfying
\begin{equation}
	\begin{gathered}
		r\ge\delta_1\ge\cdots\ge\delta_h\ge1,
		\qquad
		\sum_{j=1}^h\delta_j=k,\\
		d_j=k-\sum_{i=1}^j\delta_i
		\qquad(0\le j\le h),
	\end{gathered}
	\label{eq:admissible-intro}
\end{equation}
with $d_j=0$ for $j>h$.
The quantifiers in the definition of $M_{\mathrm{univ}}$
are ordered as follows: for every $m\ge M$ and every
admissible loss sequence $\boldsymbol\delta$, there is a
monic $g_{\boldsymbol\delta,m}\in\mathbb F_{q^m}[X]$ of
degree $r$ such that $C_{g_{\boldsymbol\delta,m}}(n)$ is MRD
and has profile $\boldsymbol\delta$. Thus the generator may
depend on both the profile and $m$; what is uniform is the
threshold, not the generator.

Our first main result establishes complete
realization and an effective uniform threshold.

\begin{theorem}
	\label{thm:main-realization}
	Fix a prime power $q$ and integers $1\le r<n$,
	and put $k=n-r$.
	Every admissible profile is realized by an
	MRD polynomial-multiple code over every
	sufficiently large extension of $\mathbb F_q$.
	If $r,k\ge2$, then
	\begin{equation}
		M_{\mathrm{univ}}(q,n,r)
		\le
		\max\left\{
		n,\,
		rk+n-\left\lceil\frac{k}{r}\right\rceil
		+\left\lceil
		\log_q\left(\frac{8(n-1)}{q-1}\right)
		\right\rceil
		\right\}.
		\label{eq:main-upper-bound}
	\end{equation}
	If $\min\{r,k\}=1$, then
	$M_{\mathrm{univ}}(q,n,r)=n$.
\end{theorem}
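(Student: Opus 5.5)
The plan is to work entirely with roots of $g$ in an algebraic closure, taking $g$ separable with roots whose $q$-power Frobenius orbits are controlled by a fixed \emph{root-exponent pattern}.

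\textbf{Intersections via divisibility.} First we describe $C_g(n)^{q^i}$ and their intersections. Coordinatewise Frobenius sends $\operatorname{coeff}_n(ug)$ to $\operatorname{coeff}_n(u^{(q^i)}g^{(q^i)})$, where ${}^{(q^i)}$ acts on coefficients. Hence $C_g(n)^{q^i}=C_{g^{(q^i)}}(n)$, and
\[
\bigcap_{i=0}^{j}C_g(n)^{q^i}=\Bigl\{\operatorname{coeff}_n(f):\ \deg f<n,\ L_j\mid f\Bigr\},
\qquad
L_j=\polylcm\bigl(g,g^{(q)},\ldots,g^{(q^j)}\bigr).
\]
This gives $d_j=\max\{0,\,n-\deg L_j\}$. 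If the roots of $g$ are $\alpha_1,\dots,\alpha_r\in\overline{\F}_q$, then the roots of $g^{(q^i)}$ are $\alpha_t^{q^i}$. So $\deg L_j$ counts the distinct elements among $\{\alpha_t^{q^i}: t\le r,\ 0\le i\le j\}$. Grouping the roots by Frobenius orbit gives the factor-orbit gap formula. If a root orbit contains roots of $g$ at positions (exponents of $q$) with consecutive gaps $\gamma_1,\gamma_2,\dots$, then each step $j\to j+1$ adds one new element for every orbit ``chain end'' not yet absorbed. Concretely, the loss $\delta_{j+1}$ equals the number of roots $\alpha_t$ for which $\alpha_t^{q^{j+1}}$ is not among the previously listed elements.

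\textbf{Realizing a profile.} Given an admissible $\boldsymbol\delta$, take its conjugate partition. Choose $r$ roots arranged in chains $\beta,\beta^{q},\dots,\beta^{q^{\ell-1}}$ of consecutive Frobenius powers, one chain for each part, with the chain lengths determined by the conjugate partition. The count of new elements at step $j$ is then the number of chains, minus saturation, which forces $\delta_j$ to follow the prescribed pattern. Tuning the chain lengths and the number of chains reproduces any nonincreasing $\delta$ with $\delta_1\le r$ and $\sum\delta_j=k$. Truncation at $n$ handles the last steps. The pattern (chain lengths and exponents) is independent of $m$; only the base elements $\beta$ vary. The $\beta$ must be chosen with $g\in\F_{q^m}[X]$, so we take $\beta$ in $\F_{q^m}$ itself. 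Frobenius then acts inside $E$, and the orbits must have length larger than $n$, which requires $m\ge n$. Chains must not collide, meaning no $\beta_s^{q^i}=\beta_t^{q^{i'}}$ for small exponents. Excluding these collisions costs at most $O(r\cdot n\cdot q^{m-?})$ elements, a lower-order term.

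\textbf{MRD condition (main obstacle).} $C_g(n)$ is MRD iff no nonzero codeword has rank $<r+1$. Equivalently, for every $\F_q$-subspace $V\le\F_q^n$ of dimension $n-r$, the Toeplitz-type generator matrix restricted appropriately is nonsingular, i.e.\ a certain determinant, polynomial in the free parameters $\beta_s$, is nonzero. I would show each such determinant is a nonzero polynomial of degree at most $D\approx q^{\,rk+n-\lceil k/r\rceil}$ in the parameters by exhibiting a leading monomial. Then I would count the bad parameters with Schwartz--Zippel over the number $\qbinom{n}{r}\le (8(n-1)/(q-1))\,q^{rk}$-type of subspaces. Requiring that this count be less than the number of good $\beta\in\F_{q^m}$ yields exactly the condition $m\ge rk+n-\lceil k/r\rceil+\lceil\log_q(8(n-1)/(q-1))\rceil$. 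This degree/leading-term computation is where I expect the real difficulty, and I would first try the case of a single long chain to identify the dominant monomial.

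\textbf{Case $\min\{r,k\}=1$.} If $r=1$, the only profile is $\delta_j=1$ for all $j$. We take $g=X-\beta$ with $\beta$ of degree $\ge n$ over $\F_q$, which exists iff $m\ge n$. The code is MRD since its dual is generated by $(1,\beta,\dots,\beta^{n-1})$, a Gabidulin-type code of rank weight $n$. If $k=1$, the single codeword $\operatorname{coeff}_n(g)$ must have rank $n$ and $\delta_1=1$. Taking $g$ with $\F_q$-independent coefficients requires $m\ge n$. Since MRD codes with $n>m$ and these parameters do not occur in this shape, we get $M_{\mathrm{univ}}=n$.
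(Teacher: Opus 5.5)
Your description of the intersections via $\polylcm$ matches \eqref{eq:global-lcm}. The realization step, however, does not work as stated. By the count you write down yourself, $D_j-D_{j-1}$ is the number of roots of $g$ whose forward gap to the next root of $g$ in the same Frobenius orbit has length at least $j$.

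If the chains never run into one another, as you require, and the orbits have length at least $n$, then every gap has length at least $h$. So each chain produces one new conjugate at \emph{every} step before truncation, and $\delta_j$ equals the number of chains for all $j<h$. You therefore only obtain profiles of the form $(c,\dots,c,\rho)$, never, for example, $(3,2,1)$.

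The chain lengths play no role in the increments. In any case, the conjugate partition of $\boldsymbol\delta$ sums to $k$, not $r$, so it cannot prescribe chain lengths. Wrap-around in short orbits does not rescue this either: by Proposition~\ref{prop:orbit-completion-obstruction}, a single chain in an occupied orbit of an MRD generator leaves at least $k\ge h$ missing positions.

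The missing idea is that the drops in $\delta_j$ come precisely from deliberate short collisions inside one orbit. The paper takes $g_T=\prod_{e\in T}(X-\alpha^{q^{e}})$ with a single seed $\alpha$ of degree $m$. It chooses $T$ with $\delta_j-\delta_{j+1}$ internal gaps of length $j$ for $1\le j<h$, $\delta_h-1$ internal gaps of length $h$, and one external gap of length $m-n+h\ge h$. Then exactly $\delta_j$ gaps have length at least $j$. It is the \emph{gap} lengths that form the conjugate partition of $\boldsymbol\delta$. This construction forces $\max T=n-h-1$, which is where $\lceil k/r\rceil$ enters the bound.

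The MRD step is left open, and the bookkeeping you anticipate would not yield \eqref{eq:main-upper-bound}. With one seed, the parity-check matrix is $H=(\alpha^{jq^{e_i}})$. For $A\in\F_q^{n\times r}$ of rank $r$ with column polynomials $P_1,\dots,P_r\in\F_q[Z]_{<n}$, one gets $\det(HA)=F_U(\alpha)$, where $F_U(Z)=\det\bigl(P_j(Z)^{q^{e_i}}\bigr)$ is a \emph{univariate polynomial over $\F_q$}.

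Choose an echelon basis with leading degrees $d_1<\dots<d_r$, so that $d_i\le k+i-1$. The strict rearrangement inequality then makes the identity permutation the unique top-degree term. Hence $F_U\ne0$, and
\[
\deg F_U\le\sum_i(k+i-1)q^{e_i}<\frac{(n-1)q^{n-h}}{q-1}.
\]
Each of the $\qbinom{n}{r}<4q^{rk}$ subspaces therefore excludes fewer than $(n-1)q^{n-h}/(q-1)$ seeds. Comparing the total with $D_q(m)>q^m/2$ and using $h\ge\lceil k/r\rceil$ gives the stated bound.

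In your sketch the factor $q^{rk}$ appears both in the per-determinant degree and in the subspace count, which would roughly double the exponent. Your direct treatment of $\min\{r,k\}=1$ is correct. It is a different but valid route from the paper's observation that $T=\{0,\dots,r-1\}$ gives $Q_{T,U}=1$.
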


The realization is constructive at the
level of root exponents: for each admissible loss sequence,
the exponent set can be fixed independently of $m$.
Specifically, we construct
\[
T=\{0=e_1<\cdots<e_r=n-h-1\}
\]
and consider
\[
g_{T,\alpha}(X)
=\prod_{i=1}^r(X-\alpha^{q^{e_i}}).
\]
The cyclic gaps between the exponents
determine the required intersection profile.
The MRD property is then established by
generalized Moore determinant tests.
Refining these tests by removing their
ordinary Moore factors yields the stated
extension bound, together with quantitative
estimates for the successful parameters.
The detailed results appear in
Theorem~\ref{thm:universal-profile-realization}
and Corollaries~\ref{cor:polynomial-profile-bound}
and~\ref{cor:boundary-threshold}.

The next question is how close this upper
bound is to the actual threshold.
Our second main result gives a lower bound
that applies to the entire polynomial-multiple
family, rather than merely to the preceding
construction.

\begin{theorem}
	\label{thm:main-lower-bound}
	Fix a prime power $q$ and integers $1\le r<n$.
	If $e$ is an integer satisfying
	$2\le e\le r$ and $n\ge re+2$, then
	\begin{equation}
		M_{\mathrm{univ}}(q,n,r)\ge e(n-1)+1.
		\label{eq:main-lower-bound}
	\end{equation}
	In particular, for every integer $s\ge2$,
	\begin{align}
		s^3+s+1
		\le M_{\mathrm{univ}}(q,s^2+2,s)
		\le s^3+s+2+
		\left\lceil
		\log_q\left(\frac{8(s^2+1)}{q-1}\right)
		\right\rceil.
		\label{eq:main-near-sharp}
	\end{align}
	Thus, for fixed $q$, the upper and lower
	bounds differ by only $O_q(\log n)$
	along this parameter sequence.
\end{theorem}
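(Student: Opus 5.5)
The displayed bounds in \eqref{eq:main-near-sharp} are a specialization, so the real work is the general lower bound \eqref{eq:main-lower-bound}. To prove it I would exhibit, for $m=e(n-1)$ (and more generally for every $n\le m\le e(n-1)$), one admissible profile $\boldsymbol\delta^{(e)}$ such that no monic $g\in\F_{q^m}[X]$ of degree $r$ gives an MRD code $C_g(n)$ with that profile. This suffices because $M_{\mathrm{univ}}$ requires realizability for \emph{every} $m\ge M$. The natural candidate is a profile whose losses are as small as the parameters permit. I would try the profile with initial loss $\delta_1=e$, followed by the longest admissible continuation, for instance $(e,e,\dots,e,\text{remainder})$. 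The hypothesis $n\ge re+2$, equivalently $k\ge r(e-1)+2$, is what I expect to make this profile admissible and long enough, with $h$ of order $k/e$.

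The first step is translating the profile into structure of $g$. For polynomial multiples, $C_g(n)^{q^i}=C_{g^{(q^i)}}(n)$, where $g^{(q^i)}$ applies Frobenius to the coefficients. The intersection $\bigcap_{i\le j}C_{g^{(q^i)}}(n)$ is the truncated ideal generated by $L_j=\polylcm(g,g^{(q)},\dots,g^{(q^j)})$, so $d_j=\max\{0,\,n-\deg L_j\}$. This is exactly the content of the factor-orbit gap formula. A small loss $\delta_1=e$ forces $\deg\polylcm(g,g^{(q)})=r+e$, so $g$ and $g^{(q)}$ share $r-e$ roots counted with multiplicity. Hence the roots of $g$ organize into at most $e$ Frobenius chains $\beta_t,\beta_t^{q},\dots$. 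The later losses then pin down the chain lengths and gaps, and hence the exponents $e_i$ with $g=\prod(X-\beta^{q^{e_i}})$ on each orbit.

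The second and main step is to show that MRD forces the orbit to be long. MRD for $C_g(n)$ is equivalent to a generalized Moore determinant condition. In the form I would use, no nonzero $q$-linearized combination of the $n$ coordinate functionals vanishes on the root set, which is equivalent to nonvanishing of determinants of the shape $\det\bigl(\beta_i^{q^{j}}\bigr)$ with exponents in the support. Roughly, each chain of consecutive Frobenius exponents contributes a Moore block that behaves like a Gabidulin code. Gluing $e$ chains whose combined exponent span must exceed $e(n-1)$ requires the $\F_q$-span of certain powers to be of full dimension in $\F_{q^m}$.

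Concretely, I would produce a nonzero $q$-polynomial of $q$-degree at most $e(n-1)$ that annihilates the relevant roots when $m\le e(n-1)$. For example, use $X^{q^m}-X$ composed appropriately, or show that the exponents $e_i$ are forced to be distinct modulo $m$ across a range of length $e(n-1)+1$. Either route yields a vanishing Moore determinant, that is, a rank-deficient codeword. I expect this step to be the obstacle: making the profile-to-exponent-span argument rigorous for arbitrary $g$, including the case of non-distinct roots and roots spread over several orbits, rather than only for the $g_{T,\alpha}$ family. I would first attempt the pigeonhole version, showing that the $e$ chains wrap around an orbit of length $m$ and collide.

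Finally, \eqref{eq:main-near-sharp} is arithmetic. Take $r=e=s$ and $n=s^2+2$; then $n\ge re+2$ holds, and \eqref{eq:main-lower-bound} gives $s(s^2+1)+1=s^3+s+1$. For the upper bound, $k=s^2-s+2$ and $\lceil k/r\rceil=s$ for $s\ge2$. Therefore $rk+n-\lceil k/r\rceil=s^3-s^2+2s+s^2+2-s=s^3+s+2$, and Theorem~\ref{thm:main-realization} with $n-1=s^2+1$ yields the right-hand side. Since $\log_q(8(s^2+1)/(q-1))=O_q(\log n)$, the gap claim follows.
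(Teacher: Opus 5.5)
There is a genuine gap. Your reduction is sound: it suffices to exhibit one admissible profile that no polynomial-multiple MRD code realizes over $\F_{q^{e(n-1)}}$. Your arithmetic for \eqref{eq:main-near-sharp} is also correct. The problem is that the profile you choose is not obstructed, so the argument fails at its first step.

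For $(e,\dots,e,\rho)$, gap inversion gives $\nu_\ell=0$ for $\ell\le h-2$. So all $e$ weighted gaps have length at least $h-1$, which is exactly the range where the truncation $d_j=\max\{0,n-D_j\}$ hides their lengths. Moreover, $\sum_{\ell\ge h-1}\nu_\ell=e$ leaves room for up to $e$ independent chains in different orbits or levels. Nothing about the spacing is pinned down, and the profile is in fact realizable. Take $r=e=2$, $n=6=re+2$ and $m=10=e(n-1)$, so your profile is $(2,2)$. Let $g=(X-\alpha)(X-\alpha^{q^3})$ with $\alpha\in\F_{q^{10}}$ of degree $10$. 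The set $\{0,3\}\subseteq\mathbb Z/10\mathbb Z$ has gaps of lengths $2$ and $6$, so $(D_0,D_1,D_2)=(2,4,6)$ and the losses are $(2,2)$. Since $\gcd(3,10)=1$, $g$ is a $\sigma_3$-root chain, and $C_g(6)$ is MRD (generalized Gabidulin of step $3$) by Theorem~\ref{thm:root-classification}. The same construction over $\F_{q^7}$ (gaps $2$ and $3$) refutes your stronger claim for all $n\le m\le e(n-1)$. In fact no size or pigeonhole mechanism (``chains wrap around and collide'') can work. Even the paper's obstructed profile is realized by step-$e$ Gabidulin codes for every $m\ge n$ coprime to $e$, so the obstruction is arithmetic in $m$, not monotone.

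The missing ingredients are a rigid profile and a subfield codeword. The paper uses $e-1$ losses equal to $r$, followed by $a=k-r(e-1)\ge2$ losses equal to $1$; this is where $n\ge re+2$ enters. The two terminal unit losses give $D_{h-1}=n-1$ and $D_{h-1}-D_{h-2}=1$. By Corollary~\ref{cor:no-base-factor-before-vanishing}, $L_{h-1}$ contains no full orbit product, so every nonempty superlevel set has a gap of length at least $h$. Then $\sum_{\ell\ge h-1}\nu_\ell=1$ forces a single orbit of linear factors at a single multiplicity level. This disposes of non-squarefree and non-split $g$ without any Moore determinants. Since $e-1\le h-2$, gap inversion gives $\nu_{e-1}=r-1$. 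Hence $g=\prod_{i=0}^{r-1}(X-\beta^{q^{ie}})$ with $t=[\F_q(\beta):\F_q]\ge n$ and $t\mid m$ (Lemma~\ref{lem:ap-profile-all-points}). At $m=e(n-1)$, put $d=\gcd(e,t)$; then $t/d\mid n-1$. So the minimal polynomial of $\beta$ over $\F_{q^d}$, namely the product over the $q^e$-orbit of $\beta$, has degree $t/d\le n-1$ and is divisible by $g$. Its coefficient vector is a nonzero codeword of rank weight at most $d\le e\le r$, contradicting MRD. Your Step~2 never reaches this codeword. Your profile, having no short gaps of a fixed length, also cannot force the step-$e$ progression that the divisibility $t\mid e(n-1)$ turns into a short orbit over $\F_{q^{\gcd(e,t)}}$.
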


The lower bound follows from a rigidity property of a suitably chosen Frobenius intersection profile, which allows us to exclude its realization at a specific extension degree. This obstruction applies to the entire polynomial-multiple MRD family and rules out a uniform every-extension threshold of order $O_q(n)$.

\subsection{Methods and relation to earlier work}

The numerical restrictions on Frobenius
intersection sequences are known
\cite{NPH2020}.
Our principal contribution is to prove their
sufficiency within the polynomial family
\eqref{eq:model-intro}, together with effective
bounds on the required extension degree.

The structural tool is a factor-orbit gap
formula for the degrees
\[
D_j(g)=
\deg\operatorname{lcm}
(g,\sigma(g),\ldots,\sigma^j(g)),
\qquad \sigma(a)=a^q.
\]
It expresses these degrees through weighted
gaps in Frobenius orbits of irreducible factors,
including repeated and nonsplit factors.
The intersection dimensions are recovered by
\[
d_j(C_g)=\max\{0,n-D_j(g)\}.
\]
The truncation at zero is important:
the observable profile need not determine
the complete factor-orbit structure.

Our construction uses generalized Moore evaluations on the
short power-basis space
\[
W_\alpha=\langle1,\alpha,\ldots,\alpha^{n-1}
\rangle_{\mathbb F_q}.
\]
The evaluation framework is classical
\cite{Gabidulin1985,KshevetskiyGabidulin2005,NPH2020},
as is the quotient of a generalized Moore determinant by the
ordinary Moore determinant~\cite{BartoliZhou2020}.
The important distinction is the domain of the determinant tests.
The global Moore-exponent-set property requires nonvanishing on
\emph{every} independent tuple in $\mathbb F_{q^m}$; for $q>5$,
Bartoli and Zhou show that a fixed non-arithmetic-progression
exponent set eventually fails that global property as $m$ grows
\cite{BartoliZhou2020}.
Our MRD test requires nonvanishing only on independent tuples
\emph{in $W_\alpha$}, where $n=\dim_{\mathbb F_q}W_\alpha$
remains fixed while $m$ varies. Thus the realization theorem
does not assert, or require, the global Moore-exponent-set
property. The specialized determinant quotients give exact
degree formulas on this restricted space and lead to effective
extension bounds.

Neri et al. established quantitative abundance results
for MRD and Gabidulin codes using subspace-indexed
determinant tests and degree counts~\cite{NeriEtAl2018}.
Here we apply this viewpoint to prescribed Frobenius
root patterns on short power-basis spaces, where specialized
Moore quotients provide exact degree bounds and effective
extension estimates.

Generalized Gabidulin codes
\cite{Gabidulin1985,KshevetskiyGabidulin2005}
provide the natural comparison class; non-Gabidulin MRD
constructions are also well established~\cite{Sheekey2016}.
Using the intersection criterion of
Horlemann-Trautmann and Marshall~\cite{HTM},
we characterize the Gabidulin members by
their Frobenius root chains and prove that
every admissible profile with
$\delta_1\ge2$ has a non-generalized-Gabidulin
realization over infinitely many extensions.
Moreover, writing $p_{\le r}(k)$ for the
number of partitions of $k$ with largest
part at most $r$, complete realization
gives exactly $p_{\le r}(k)$ attainable
profiles and at least that many semilinear
rank-equivalence classes.
This is a lower bound on equivalence classes,
not a complete classification.

The paper is organized as follows.
Section~\ref{sec:models} introduces the
polynomial model and the necessary profile
conditions.
Section~\ref{sec:global-hierarchy} develops
the factor-orbit gap formula.
Sections~\ref{sec:profile-realization}
and~\ref{sec:profile-field-bounds} establish
complete realization and effective extension
bounds.
Section~\ref{sec:profile-lower-bounds}
proves the lower bound, while
Section~\ref{sec:classification} treats
the Gabidulin comparison.

\section{Polynomial codes and Frobenius profiles}
\label{sec:models}

Throughout, let $q$ be a prime power, $E=\mathbb F_{q^m}$, and
\[
1\le k<n\le m,\qquad r=n-k.
\]
Unless otherwise stated, all code dimensions are taken over $E$.
For a vector $c=(c_0,\ldots,c_{n-1})\in E^n$, its rank weight
and the associated rank distance are defined by
\[
\operatorname{wt}_q(c)
=\dim_{\mathbb F_q}
\langle c_0,\ldots,c_{n-1}\rangle_{\mathbb F_q},
\qquad
d_q(c,c')=\operatorname{wt}_q(c-c').
\]
The minimum rank distance of a nonzero linear code
$C\le E^n$ is
\[
d_q(C)=\min_{0\ne c\in C}\operatorname{wt}_q(c).
\]
Since rank weight does not exceed Hamming weight, the
classical Singleton bound implies
\[
d_q(C)\le n-\dim_E C+1.
\]
A code attaining equality is called a
\emph{maximum rank distance} (MRD) code.
In particular, since $\dim_E C_g(n)=k$, the polynomial code
$C_g(n)$ is MRD precisely when $d_q(C_g(n))=r+1$.

For $f(X)=\sum_{i=0}^{n-1}f_iX^i\in E[X]_{<n}$, write
\[
\operatorname{coeff}_n(f)=(f_0,\ldots,f_{n-1})\in E^n,
\]
where missing coefficients are understood to be zero.
If $g\in E[X]$ is monic of degree $r$, then
\[
\operatorname{coeff}_n(g),\,
\operatorname{coeff}_n(Xg),\,\ldots,\,
\operatorname{coeff}_n(X^{k-1}g)
\]
form an $E$-basis of $C_g(n)$.
Indeed, multiplication by the nonzero polynomial $g$ is
injective on $E[X]_{<k}$, and every resulting product has
degree less than $n$. Equivalently,
\[
C_g(n)
=\operatorname{coeff}_n\bigl((g)\cap E[X]_{<n}\bigr),
\]
where $(g)$ denotes the principal ideal generated by $g$
in $E[X]$. Thus the construction uses ordinary polynomial
multiplication with a degree restriction, without reduction
modulo an ambient polynomial.

Finally, for $s\ge0$, let
\[
\sigma_s(a)=a^{q^s}\qquad(a\in E),
\qquad \sigma=\sigma_1.
\]
These Frobenius maps act coordinatewise on $E^n$ and
coefficientwise on $E[X]$; in particular, the indeterminate
$X$ is fixed.

\subsection{Intersections and the MRD test}

\begin{lemma}
\label{lem:intersection}
If \(g,h\) are monic of degree \(r<n\), then
\begin{equation}
 \dim_E(C_g(n)\cap C_h(n))
       =\max\{0,n-2r+\deg\gcd(g,h)\}.
 \label{eq:gcd-intersection}
\end{equation}
Moreover, \(C_g(n)=C_h(n)\) implies \(g=h\).
\end{lemma}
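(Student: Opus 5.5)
We identify $C_g(n)$ with the $E$-subspace $(g)\cap E[X]_{<n}$ of $E[X]_{<n}$. This identification is legitimate because $\operatorname{coeff}_n$ is an $E$-linear isomorphism from $E[X]_{<n}$ onto $E^n$. Under it, the intersection $C_g(n)\cap C_h(n)$ corresponds to $(g)\cap(h)\cap E[X]_{<n}$.

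Since $E[X]$ is a principal ideal domain, $(g)\cap(h)=(\ell)$ with $\ell=\polylcm(g,h)$. Because $g$ and $h$ are monic, $\deg\ell=\deg g+\deg h-\deg\gcd(g,h)=2r-\deg\gcd(g,h)$. We then need $\dim_E\bigl((\ell)\cap E[X]_{<n}\bigr)$. If $\deg\ell\ge n$, every nonzero multiple of $\ell$ has degree at least $n$, so the space is zero. Otherwise, the space equals $\{u\ell:\deg u<n-\deg\ell\}$. Multiplication by $\ell$ is injective, and $u\ell$ lies in $E[X]_{<n}$ exactly when $\deg u<n-\deg\ell$, so the dimension is $n-\deg\ell$. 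In both cases the dimension is $\max\{0,n-\deg\ell\}=\max\{0,n-2r+\deg\gcd(g,h)\}$, which is \eqref{eq:gcd-intersection}.

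For the second assertion, suppose $C_g(n)=C_h(n)$. Then the intersection has dimension $k=n-r\ge1$. Since this is positive, the formula gives $n-2r+\deg\gcd(g,h)=n-r$, so $\deg\gcd(g,h)=r$. A monic divisor of $g$ of degree $r=\deg g$ equals $g$, and likewise it equals $h$; hence $g=h$. An alternative argument: $g$ is the unique monic element of minimal degree in $(g)\cap E[X]_{<n}$, which is nonempty because $r<n$. No step is a real obstacle; the only care needed is the case split according to whether $\deg\ell<n$, which produces the truncation at zero.
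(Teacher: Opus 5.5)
Your proof is correct and follows essentially the same route as the paper: you identify the intersection with the truncated ideal $(\operatorname{lcm}(g,h))\cap E[X]_{<n}$ and count its dimension as $\max\{0,n-\deg\operatorname{lcm}(g,h)\}$. For the uniqueness claim, the paper argues directly that $\operatorname{coeff}_n(g)\in C_h(n)$ forces $h\mid g$, whereas you read off $\deg\gcd(g,h)=r$ from the formula; both arguments are valid.
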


\begin{proof}
Let $L=\polylcm(g,h)$. A polynomial of degree less than $n$
belongs to both codes precisely when it is divisible by $L$, so
\[
C_g(n)\cap C_h(n)
=\coeff_n\bigl((L)\cap E[X]_{<n}\bigr).
\]
This space has dimension $\max\{0,n-\deg L\}$.
Since $\deg L=2r-\deg\gcd(g,h)$, this proves
\eqref{eq:gcd-intersection}.
If $C_g(n)=C_h(n)$, then $\coeff_n(g)\in C_h(n)$, hence $h\mid g$.
Both polynomials are monic of degree $r$, so $g=h$.
\end{proof}

We recall the standard parity-check criterion for MRD codes
\cite[Proposition~2.11]{HTM}.

\begin{proposition}\label{prop:mrd-criterion}
	Let $H\in E^{r\times n}$ have rank $r$, and put $C=\ker H$.
	Then $C$ is MRD if and only if
	\[
	\det(HA)\ne0
	\qquad
	\text{for every }A\in\F_q^{n\times r}
	\text{ of rank }r.
	\]
	It suffices to choose one basis matrix for each
	$r$-dimensional subspace of $\F_q^n$.
\end{proposition}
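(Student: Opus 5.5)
We prove Proposition~\ref{prop:mrd-criterion} by translating rank weight into a statement about $\F_q$-linear column operations. A vector $c\in E^n$ has $\operatorname{wt}_q(c)\le n-r$ if and only if there is a matrix $A\in\F_q^{n\times r}$ of rank $r$ with $cA=0$. Indeed, write $c=\sum_{t=1}^m \beta_t c^{(t)}$ with $c^{(t)}\in\F_q^n$, where $\beta_1,\ldots,\beta_m$ is an $\F_q$-basis of $E$. Then $\operatorname{wt}_q(c)$ equals the $\F_q$-rank of the $m\times n$ matrix with rows $c^{(t)}$. The right kernel of this matrix over $\F_q$ is exactly $\{a\in\F_q^n: c\cdot a=0\}$, and its dimension is $n-\operatorname{wt}_q(c)$. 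So the weight is at most $n-r$ precisely when this kernel contains an $r$-dimensional subspace $U$, and we take $A$ to be a basis matrix of $U$.

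Since $\dim_E C=k=n-r$, the code is MRD if and only if no nonzero $c\in C$ has weight at most $n-r$. By the previous paragraph, this fails if and only if some rank-$r$ matrix $A$ and some nonzero $c\in C$ satisfy $cA=0$. We now use the parity-check description. The map $E^n\to E^r$, $x\mapsto Hx^{\top}$, is surjective with kernel $C$. The $E$-space $\ker(A^{\top}\colon E^n\to E^r)$ has dimension $n-r$, because $A$ has rank $r$ over $\F_q$ and hence over $E$. Consequently $C\cap\{c: cA=0\}\neq0$ if and only if $H$ restricted to $\ker A^{\top}$ is not injective.

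To make this concrete, compare with $HA$. The map $E^r\to E^n$, $y\mapsto Ay$, is injective, and $HA$ is the composite $E^r\to E^n\to E^r$. We claim that $HA$ is singular exactly when $C$ meets $\ker A^{\top}$ nontrivially. The cleanest route is duality. The dual code $C^\perp$ is the row space of $H$, and it is MRD exactly when $C$ is, by the standard Delsarte duality for MRD codes. A nonzero $yH\in C^\perp$ satisfies $yHA=0$ if and only if $\det(HA)=0$, since $H$ has full row rank, so $yH\neq0$ for $y\neq0$. By the first paragraph applied to $C^\perp$, which has dimension $r$, such a codeword has weight at most $n-r$, violating the MRD bound $d_q(C^\perp)=n-r+1$. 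Conversely, a codeword of weight at most $n-r$ yields such an $A$. Hence $C^\perp$, and therefore $C$, is MRD if and only if $\det(HA)\neq0$ for every rank-$r$ matrix $A$.

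The last sentence of the proposition follows because replacing $A$ by $AP$ with $P\in\GL_r(\F_q)$ multiplies $\det(HA)$ by $\det P\neq0$. The only nonroutine ingredient is the duality invariance of the MRD property. I expect that step to be the main point to cite rather than reprove; it is classical from Delsarte and Gabidulin, and the cited source~\cite{HTM} also relies on it.
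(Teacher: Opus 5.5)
Your proof goes through, but only along the duality route of your third paragraph. The preceding direct attempt contains two false statements, and you should delete them rather than leave them in.

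\begin{itemize}
\item Since $\dim_E C=k$, the MRD condition is $d_q(C)=n-k+1=r+1$. This means no nonzero codeword of weight at most $r$, not at most $n-r=k$. When $k>r$, an MRD code has codewords of weight $r+1\le k$.
\item The claim that $HA$ is singular iff $C\cap\ker A^{\top}\ne0$ is false. If $k>r$, the two $(n-r)$-dimensional subspaces $C$ and $\ker A^{\top}$ of $E^n$ always meet. Yet for $n=3$, $r=1$, $H=(1,\alpha,\alpha^2)$ with $\alpha\in E$ of degree $3$ over $\F_q$, one has $HA=a_0+a_1\alpha+a_2\alpha^2\ne0$ for every nonzero $A$. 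What is true is that $HA$ is singular iff $C$ meets the column space $\{Ay:y\in E^r\}$ nontrivially.
\end{itemize}

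The argument you actually finish with is correct. It applies your weight lemma to $C^\perp$, the $r$-dimensional row space of $H$, and then invokes MRD duality.

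The paper gives no proof here; it only cites \cite[Proposition~2.11]{HTM}. The corrected observation yields the standard direct proof, with no duality needed. For rank-$r$ $A$, $\det(HA)=0$ iff $H(Ay)=0$ for some $y\ne0$. Equivalently, $C$ contains the nonzero vector $Ay$, whose coordinates lie in $\langle y_1,\ldots,y_r\rangle_{\F_q}$, so $\operatorname{wt}_q(Ay)\le r$. Conversely, take a nonzero codeword of weight $w\le r$. Write it as $A\gamma$, where $\gamma$ is an $\F_q$-basis of its coordinate span and $A\in\F_q^{n\times w}$ has rank $w$. Padding $A$ to rank $r$, and $\gamma$ by zeros, gives a singular $HA$. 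This is the rank analogue of ``every $d-1$ columns of $H$ are independent.''

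Your route replaces this two-line argument by the MRD duality theorem, which is a strictly heavier ingredient. In fact, the direct criterion combined with your third paragraph \emph{proves} duality. Invoking duality here therefore inverts the usual logic, and it is non-circular only because duality also has an independent proof (Delsarte's). What your argument does establish on its own is that the determinant condition is equivalent to $C^\perp$ being MRD. That is exactly the half that, together with the direct argument, yields duality.
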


\subsection{Admissible loss sequences}

For an $E=\mathbb F_{q^m}$-linear code $C\le E^n$ and a generator
$\sigma\in\operatorname{Gal}(E/\mathbb F_q)$, write
\[
 I_j(C,\sigma)=\bigcap_{t=0}^j\sigma^tC,\qquad
 d_j(C,\sigma)=\dim_E I_j(C,\sigma)\qquad(j\ge0).
\]
All Frobenius maps act coordinatewise.

The following proposition specializes the general properties
of Frobenius intersection sequences
\cite[Proposition~3.6]{NPH2020} to MRD codes.
We include a proof for completeness, since these numerical
conditions form the starting point of our realization theorem.

\begin{proposition}
	\label{prop:necessary-profile}
	Suppose $1\le k<n\le m$ and $C$ is an $E$-linear MRD code
	of dimension $k$, with $r=n-k$.
	Then there is an integer $h$, $1\le h\le k$, such that
	\[
	k=d_0>d_1>\cdots>d_{h-1}>d_h=0,
	\qquad d_j=0\quad(j\ge h).
	\]
	Its positive losses $\delta_j=d_{j-1}-d_j$ satisfy
	\begin{equation}\label{eq:admissible-losses}
		r\ge\delta_1\ge\delta_2\ge\cdots\ge\delta_h\ge1,
		\qquad \sum_{j=1}^h\delta_j=k.
	\end{equation}
\end{proposition}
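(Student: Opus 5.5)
We have to prove Proposition~\ref{prop:necessary-profile}: the Frobenius intersection dimensions of an MRD code strictly decrease until they reach zero, and the losses are nonincreasing, bounded by $r$, and sum to $k$. Write $I_j=I_j(C,\sigma)$ and $d_j=\dim_E I_j$. Each $I_j$ is $E$-linear, since Frobenius images of $E$-subspaces are $E$-subspaces. Also $I_{j+1}=I_j\cap\sigma I_j$, because $\sigma I_j=\bigcap_{t=1}^{j+1}\sigma^tC$. The argument combines two general facts with one MRD-specific fact.

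\textbf{Step 1 (stabilisation forces zero).} If $d_{j+1}=d_j$, then $I_j\subseteq\sigma I_j$. Equal dimensions turn this into $I_j=\sigma I_j$. A $\sigma$-invariant $E$-subspace of $E^n$ is Galois-closed, so it has an $E$-basis of vectors in $\F_q^n$ (descent along $E/\F_q$). Any nonzero such vector has rank weight $1$. If $I_j\ne0$, this gives a codeword of $C$ of rank weight $1<r+1=d_q(C)$, contradicting the MRD property. Hence once the sequence stops decreasing it is zero. So there is a first $h$ with $d_h=0$, and the sequence strictly decreases before it. Since $d_0=k$ and each strict drop is at least $1$, we get $h\le k$. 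We also get $h\ge1$, since $k\ge1$.

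\textbf{Step 2 (monotonicity of losses).} By Grassmann's formula,
\[
\delta_{j+1}=d_j-d_{j+1}=\dim(I_j+\sigma I_j)-\dim I_j=\dim\bigl((I_j+\sigma I_j)/I_j\bigr).
\]
Also $I_{j}+\sigma I_{j}\subseteq I_{j-1}\cap\sigma I_{j-1}$? That inclusion fails in general, so I would use the standard argument instead. Consider the $\sigma$-semilinear map
\[
\sigma I_{j-1}/ \sigma I_j\ \longrightarrow\ \text{nothing}.
\]
Cleaner is the map $I_{j}/I_{j+1}\to (I_{j-1}+\sigma I_{j-1})/I_{j-1}$? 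The route I would actually follow is to compare sums. We have $I_j+\sigma I_j\subseteq \sigma I_{j-1}$, and $\sigma I_j\cap I_j= I_{j+1}$. The map $x\mapsto\sigma x$ from $I_j/I_{j+1}$ has image in $\sigma I_j/\sigma I_{j+1}$. Then the natural inclusion $\sigma I_j/(\sigma I_j\cap I_j)\hookrightarrow \sigma I_{j-1}/(\sigma I_{j-1}\cap I_{j-1})$ holds, because $\sigma I_j\cap I_{j-1}\cap\sigma I_{j-1}\ldots$. Concretely, $\sigma I_j\cap I_{j-1}=\sigma I_j\cap I_j$, since $\sigma I_j\subseteq\sigma^{t}C$ for $1\le t\le j+1$ and $I_{j-1}\subseteq C$. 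Hence
\[
\delta_{j+1}=\dim\sigma I_j-\dim(\sigma I_j\cap I_{j-1})=\dim\bigl((\sigma I_j+I_{j-1})/I_{j-1}\bigr)\le\dim\bigl((\sigma I_{j-1}+I_{j-1})/I_{j-1}\bigr)=\delta_j.
\]
The middle equality uses $\dim\sigma I_j=\dim I_j$ and Grassmann's formula. The final equality uses $\delta_j=\dim I_{j-1}-\dim(I_{j-1}\cap\sigma I_{j-1})$. This is the main bookkeeping step, and I expect it to be the only delicate point.

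\textbf{Step 3 (bound $\delta_1\le r$).} We have $\delta_1=\dim(C+\sigma C)-k\le n-k=r$. Finally, $\sum_j\delta_j=d_0-d_h=k$ by telescoping. This gives \eqref{eq:admissible-losses}.

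The only place MRD is used is Step 1, and there only through $d_q(C)>1$. That step relies on the classical Galois-descent lemma: a subspace $V\le E^n$ with $\sigma V=V$ satisfies $V=(V\cap\F_q^n)\otimes E$. I would cite this, or prove it quickly by taking the reduced row echelon basis of $V$, which is unique and hence fixed by $\sigma$, so its entries lie in $\F_q$.
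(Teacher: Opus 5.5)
Your proof is correct and follows essentially the paper's route. Step~1 is the same reduced-row-echelon descent, Step~3 is the same estimate $d_1\ge 2k-n$, and your final display in Step~2 is a Grassmann-formula counterpart of the paper's semilinear injection $I_j/I_{j+1}\hookrightarrow I_{j-1}/I_j$, $x\mapsto\sigma^{-1}x$; that display rests on the valid identity $\sigma I_j\cap I_{j-1}=I_{j+1}=\sigma I_j\cap I_j$. In the write-up, delete the abandoned attempts that precede that display, since only the last computation is needed.
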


\begin{proof}
	We have $I_{j+1}=I_j\cap\sigma I_j$.
	For $j\ge1$, consider the map
	\[
	I_j/I_{j+1}\longrightarrow I_{j-1}/I_j,
	\qquad
	x+I_{j+1}\longmapsto\sigma^{-1}x+I_j.
	\]
	This map is well-defined, injective, and
	$\sigma^{-1}$-semilinear.
	Indeed, $\sigma^{-1}I_j\subseteq I_{j-1}$, and
	for $x\in I_j$, the condition $\sigma^{-1}x\in I_j$
	is equivalent to
	$x\in I_j\cap\sigma I_j=I_{j+1}$.
	Comparing the dimensions of the two quotient spaces gives
	\[
	d_j-d_{j+1}\le d_{j-1}-d_j.
	\]
	Thus the successive losses are nonincreasing.
	
	Next, suppose that $I_j=I_{j+1}\ne0$ for some $j\ge0$.
	Since $I_{j+1}=I_j\cap\sigma I_j$ and
	$\dim_E(\sigma I_j)=\dim_E I_j$, we obtain
	$I_j=\sigma I_j$.
	Let $R$ be the unique reduced-row-echelon generator matrix
	of $I_j$.
	Since $\sigma I_j=I_j$, applying $\sigma$ entrywise to $R$
	gives another reduced-row-echelon matrix with the same
	row space. By uniqueness, $\sigma R=R$.
	As the fixed field of $\sigma$ is $\F_q$, the nonzero
	rows of $R$ belong to $\F_q^n$.
	Consequently, $I_j\subseteq C$ contains a nonzero
	rank-one codeword, contradicting the MRD minimum distance
	$r+1\ge2$.
	Hence the intersection dimensions decrease strictly
	until they reach zero. Since $d_0=k$, this occurs
	at some index $h\le k$.
	
	Finally, the dimension inequality
	\[
	d_1=\dim_E(C\cap\sigma C)
	\ge 2k-n=k-r
	\]
	implies $\delta_1=k-d_1\le r$.
	Since $d_0=k$ and $d_h=0$, telescoping gives
	\[
	\sum_{j=1}^h\delta_j
	=\sum_{j=1}^h(d_{j-1}-d_j)=k.
	\]
	This completes the proof.
\end{proof}

\subsection{Semilinear rank equivalence}
\label{sec:rank-equivalence}

Two $E$-linear codes $C,D\le E^n$ are \emph{semilinearly rank equivalent}
if $D=\lambda\tau(C)B$, where $\lambda\in E^\times$,
$\tau\in\operatorname{Aut}(E)$ acts coordinatewise, and
$B\in\GL_n(q)$ \cite[Theorem~2.5]{NPH2020}.
Since $C$ is $E$-linear, the scalar $\lambda$ may be omitted.
Finite-field automorphisms commute, and $\sigma(B)=B$ for every
$\sigma\in\operatorname{Gal}(E/\mathbb F_q)$. Hence
\[
\bigcap_{i=0}^j\sigma^i(D)
=\tau\left(\bigcap_{i=0}^j\sigma^i(C)\right)B
\qquad(j\ge0).
\]
Thus each Frobenius intersection dimension is invariant under
semilinear rank equivalence \cite[Lemma~3.1]{NPH2020}.
We use this invariant in Section~\ref{sec:classification} to distinguish
codes with different profiles.

\section{The factor-orbit gap formula}
\label{sec:global-hierarchy}

In this section, $\sigma$ may be any generator of
$\operatorname{Gal}(E/\mathbb F_q)$; we suppress it from
$d_j(C,\sigma)$ when no confusion arises.

\subsection{Factor orbits and the gap formula}

For $j\ge0$, define the monic least common multiple
\[
L_j(g)=\operatorname{lcm}
\bigl(g,\sigma(g),\ldots,\sigma^j(g)\bigr),
\qquad D_j(g)=\deg L_j(g).
\]
Since $\sigma^i$ preserves polynomial multiplication and is
a bijection on $E[X]_{<k}$, we have
$\sigma^i(C_g(n))=C_{\sigma^i(g)}(n)$.
A polynomial belongs to all these codes precisely when
it is divisible by $L_j(g)$. Consequently,
\begin{equation}\label{eq:global-lcm}
	\begin{aligned}
		\bigcap_{i=0}^{j}\sigma^i(C_g(n))
		&=\bigl\{\coeff_n(L_j(g)u):
		u\in E[X],\ \deg(L_j(g)u)<n\bigr\},\\
		d_j(C_g)&=\max\{0,n-D_j(g)\}.
	\end{aligned}
\end{equation}
In particular, the intersection vanishes once $D_j(g)\ge n$.

The basic gap mechanism is already visible when $g$ is
squarefree and all its roots lie in one Frobenius orbit.
If $g(X)=\prod_{i\in T}(X-\sigma^i(\alpha))$, where
$\alpha$ has $\sigma$-orbit length $t$ and
$T\subseteq\mathbb Z/t\mathbb Z$, then
$D_j(g)=|T+\{0,\ldots,j\}|$.
A block of $\ell$ missing orbit positions contributes
$\min\{j,\ell\}$ new roots after the first $j$ shifts.
For general $g$, the same count is made at each
multiplicity level of each irreducible-factor orbit,
with contributions weighted by factor degrees.

To compute $D_j(g)$, factor $g$ into monic irreducible
polynomials over $E$ and retain each distinct $\sigma$-orbit
containing a factor of $g$ once. The full orbits are needed
because a Frobenius conjugate of $g$ may contain irreducible
factors that do not occur in $g$ itself. These orbits are
finite, since $\sigma$ has finite order.
For such an orbit, choose
$P_0\mid g$ and write
\[
\mathcal O=(P_0,\ldots,P_{t-1}),
\qquad P_i=\sigma^i(P_0)
\quad(i\in\mathbb Z/t\mathbb Z).
\]
These are distinct irreducible polynomials of a common
degree, with $\sigma(P_{t-1})=P_0$.
We record the multiplicity at every position, including
those where the factor is absent from $g$. Set
\[
e_{\mathcal O}=\deg P_i,
\qquad a_i=v_{P_i}(g),
\]
where $a_i=0$ when $P_i\nmid g$. Thus
\[
g=\prod_{\mathcal O}\prod_{i=0}^{t-1}P_i^{a_i},
\qquad
r=\sum_{\mathcal O}e_{\mathcal O}
\sum_{i=0}^{t-1}a_i.
\]
This description allows repeated and nonsplit factors,
as well as conjugate factors absent from $g$.
Applying $\sigma$ shifts the multiplicities one position
around each orbit. Thus the same orbit data describe the
factorizations of all the polynomials entering $L_j(g)$.

For each orbit and each multiplicity level
$1\le u\le\max_i a_i$, define the \emph{superlevel set}
\[
S_{\mathcal O,u}
=\{i\in\mathbb Z/t\mathbb Z:a_i\ge u\}.
\]
This is the set of positions where the factor occurs at
least $u$ times. A position of multiplicity $a_i$ belongs
to precisely $a_i$ such sets, so summing their cardinalities
recovers the total multiplicity in that orbit.

For a nonempty proper subset $S$ of $\mathbb Z/t\mathbb Z$,
a \emph{cyclic gap} is a maximal nonempty string of
consecutive indices outside $S$. Its length is the number
of missing positions between two successive elements of
$S$. The full orbit has no gaps.

For $\ell\ge1$, let $\nu_\ell$ count all gaps of length
$\ell$ in all sets $S_{\mathcal O,u}$, assigning each gap
the weight $e_{\mathcal O}$. Distinct gaps of the same
length are counted separately; a gap occurring at several
multiplicity levels contributes once at each level.
Full superlevel sets contribute zero.
For example, if $S_{\mathcal O,u}$ has $b$ gaps of length
$\ell$, it contributes $b e_{\mathcal O}$ to $\nu_\ell$.
The weighting reflects the degree contributed by each
newly occupied factor position; counting the levels separately
reflects its multiplicity.

\begin{theorem}
	\label{thm:orbit-gap-formula}
	For every $j\ge0$,
	\begin{equation}
		D_j(g)=r+\sum_{\ell\ge1}\nu_\ell\min\{j,\ell\}.
		\label{eq:gap-hierarchy}
	\end{equation}
	Consequently,
	\begin{align}
		D_j-D_{j-1}
		&=\sum_{\ell\ge j}\nu_\ell
		\qquad(j\ge1),
		\label{eq:gap-increments}\\
		\nu_j
		&=2D_j-D_{j-1}-D_{j+1}
		\qquad(j\ge1).
		\label{eq:gap-inversion}
	\end{align}
	Thus the full degree hierarchy determines the
	weighted gap-length distribution, but not necessarily
	the positions of the gaps, their factor orbits,
	or the nesting of the superlevel sets.
\end{theorem}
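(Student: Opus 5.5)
The plan is to compute $L_j(g)$ orbit by orbit and level by level, and then reduce everything to a count on a cyclic set.

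\textbf{Step 1 (orbit decomposition).} Distinct orbits involve pairwise coprime irreducible polynomials. So $L_j(g)$ factors as a product over orbits, and $D_j(g)$ is the sum of the orbit contributions. Fix an orbit $\mathcal O$ with multiplicities $(a_i)_{i\in\mathbb Z/t\mathbb Z}$. Since $\sigma^s(g)$ has multiplicity $a_{i-s}$ at $P_i$, the exponent of $P_i$ in $L_j(g)$ is
\[
b_i=\max_{0\le s\le j}a_{i-s}.
\]
Hence this orbit contributes $e_{\mathcal O}\sum_i b_i$.

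\textbf{Step 2 (layer-cake).} Write $b_i=\sum_{u\ge1}[b_i\ge u]$. We have $b_i\ge u$ exactly when $i\in S_{\mathcal O,u}+\{0,\ldots,j\}$. Therefore
\[
\sum_i b_i=\sum_u\bigl|S_{\mathcal O,u}+\{0,\ldots,j\}\bigr|.
\]
This reduces everything to the single-orbit squarefree count described before the theorem.

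\textbf{Step 3 (cyclic count).} For a nonempty $S\subseteq\mathbb Z/t\mathbb Z$, the complement of $S$ is the disjoint union of its cyclic gaps. Take a gap of length $\ell$ following an element $x\in S$, so that the element of $S$ after the gap is $x+\ell+1$. The positions covered by $S+\{0,\ldots,j\}$ inside this gap are $x+1,\ldots,x+\min\{j,\ell\}$. Other elements of $S$ cannot cover more of this gap: an element of $S$ that precedes $x$ cyclically, shifted by at most $j$, lands inside the gap only at positions already covered from $x$. Making this cyclic-order argument clean is the only real obstacle. The case $j\ge t$ is handled because $\ell\le t-1$. Hence
\[
|S+\{0,\ldots,j\}|=|S|+\sum_{\text{gaps}}\min\{j,\ell\},
\]
and when $S$ is the full orbit the gap sum is empty. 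Summing over $u$ and over orbits with weight $e_{\mathcal O}$ turns the $|S|$ terms into $r$, since $\sum_u|S_{\mathcal O,u}|=\sum_i a_i$. The gap terms become $\sum_\ell\nu_\ell\min\{j,\ell\}$, which is \eqref{eq:gap-hierarchy}.

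\textbf{Step 4 (consequences).} We have $\min\{j,\ell\}-\min\{j-1,\ell\}=[\ell\ge j]$, which gives \eqref{eq:gap-increments}. Differencing once more, the increments give $(D_j-D_{j-1})-(D_{j+1}-D_j)=\nu_j$, which is \eqref{eq:gap-inversion}.

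For the final remark, exhibit two polynomials with the same weighted gap distribution but different gap positions or orbits. For example, use two squarefree single-orbit sets $T$ with the same multiset of gap lengths in different cyclic orders.
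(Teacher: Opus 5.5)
Your proposal is correct and follows the paper's proof essentially step for step. It uses the same per-orbit formula $b_i(j)=\max_{0\le s\le j}a_{i-s}$, the same level-by-level reduction to $|S_{\mathcal O,u}+\{0,\ldots,j\}|$, the same gap-filling count $|S|+\sum\min\{j,\ell\}$, and the same differencing. Your closing example (equal gap multisets in different cyclic orders) is exactly the paper's Example~\ref{ex:same-profile-inequivalent}; note that it illustrates only that gap positions are not determined, not that factor orbits or the nesting of superlevel sets are not determined.
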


\begin{proof}
	Fix an orbit $\mathcal O=(P_0,\ldots,P_{t-1})$.
	The multiplicity of $P_i$ in $\sigma^s(g)$ is
	$a_{i-s}$, with indices modulo $t$. Hence
	\[
	b_i(j):=v_{P_i}(L_j(g))
	=\max_{0\le s\le j}a_{i-s}.
	\]
	At multiplicity level $u$, this gives
	\[
	b_i(j)\ge u
	\iff i\in S_{\mathcal O,u}+\{0,\ldots,j\},
	\]
	where addition takes place in $\mathbb Z/t\mathbb Z$.
	Indeed, $b_i(j)\ge u$ means that $a_{i-s}\ge u$
	for at least one $s\in\{0,\ldots,j\}$. Thus the
	$u$-th superlevel set of $L_j(g)$ is the union of the
	first $j+1$ shifts of $S_{\mathcal O,u}$.

	For a nonempty proper subset $S$ of this cyclic orbit,
	enlargement to $S+\{0,\ldots,j\}$ fills the first
	$\min\{j,\ell\}$ positions of each gap of length $\ell$,
	starting from its preceding occupied position.
	More explicitly, a gap
	$\{a+1,\ldots,a+\ell\}$ with endpoints
	$a,a+\ell+1\in S$ retains exactly
	$\max\{\ell-j,0\}$ missing positions.
	Since the gaps partition the complement of $S$,
	\[
	|S+\{0,\ldots,j\}|
	=|S|+\sum_{\text{gaps of }S}\min\{j,\ell\}.
	\]
	The identity also holds for the full orbit, with an
	empty gap sum. Summing over all multiplicity levels
	and weighting by the factor degrees yields
	\[
	\begin{aligned}
		D_j(g)
		&=\sum_{\mathcal O}e_{\mathcal O}
		\sum_{u=1}^{\max_i a_i}
		\bigl|S_{\mathcal O,u}+\{0,\ldots,j\}\bigr|\\
		&=r+\sum_{\ell\ge1}\nu_\ell\min\{j,\ell\}.
	\end{aligned}
	\]
	Here the initial contribution is
	$\sum_{\mathcal O}e_{\mathcal O}
	\sum_u|S_{\mathcal O,u}|=\deg g=r$;
	hence full superlevel sets are also accounted for.
	Taking first and second differences proves
	\eqref{eq:gap-increments} and \eqref{eq:gap-inversion}.
\end{proof}

The intersection profile does not necessarily determine
the full degree hierarchy. Indeed, if $h$ is the first
index with $d_h(C_g)=0$, then
\[
D_j(g)=n-d_j(C_g)\quad(0\le j<h),
\]
while for $j\ge h$ only $D_j(g)\ge n$ is known.
Consequently, the profile determines $\nu_j$ through
\eqref{eq:gap-inversion} for $1\le j\le h-2$,
but not necessarily beyond this range.
Even the complete degree hierarchy associated with a
polynomial generator does not necessarily determine the
semilinear rank-equivalence class within the polynomial-multiple
family; see Example~\ref{ex:same-profile-inequivalent}.

\begin{example}
	\label{ex:gap-count}
	Let $n=9$, $m\ge9$, and choose
	$\alpha\in\mathbb F_{q^m}$ of degree $m$ over $\mathbb F_q$.
	Consider the root-exponent set $T=\{0,2,5\}$ and
	\[
	g(X)=(X-\alpha)(X-\alpha^{q^2})(X-\alpha^{q^5}).
	\]
	The three linear factors belong to a single Frobenius
	orbit of length $m$. Since $g$ is squarefree,
	there is only one nonempty multiplicity level,
	whose superlevel set is $S=T\subseteq\mathbb Z/m\mathbb Z$.
	
	The three cyclic gaps of $S$ are
	\[
	\{1\},\qquad
	\{3,4\},\qquad
	\{6,7,\ldots,m-1\},
	\]
	with lengths $1$, $2$, and $m-6$, respectively.
	Since the factors are linear, each gap has weight one.
	Thus
	\[
	\nu_1=\nu_2=\nu_{m-6}=1,
	\]
	and all other weighted gap counts vanish.
	
	By Theorem~\ref{thm:orbit-gap-formula},
	\[
	D_j(g)
	=3+\min\{j,1\}+\min\{j,2\}
	+\min\{j,m-6\}.
	\]
	In particular, for $0\le j\le3$,
	\[
	(D_0,D_1,D_2,D_3)=(3,6,8,9).
	\]
	Since $n=9$, formula~\eqref{eq:global-lcm} gives
	\[
	(d_0,d_1,d_2,d_3)=(6,3,1,0),
	\]
	with positive losses $(3,2,1)$.
	
	This calculation determines the Frobenius intersection
	profile but does not by itself establish the MRD property.
	In Example~\ref{ex:profile-construction}, we return to
	the same exponent pattern and give sufficient conditions
	ensuring that the resulting code is MRD.
\end{example}

\subsection{Multichains and base-field obstructions}

The multiplicity superlevel sets also provide a canonical
decomposition into consecutive factor chains.

\begin{corollary}
	\label{cor:canonical-multichains}
	Let $g_{\mathrm{fix}}$ be the greatest monic divisor
	of $g$ belonging to $\mathbb F_q[X]$.
	Then $g$ admits a factorization
	\[
	g=g_{\mathrm{fix}}
	\prod_{\lambda}
	\left(
	P_\lambda\,\sigma(P_\lambda)\cdots
	\sigma^{b_\lambda-1}(P_\lambda)
	\right),
	\]
	where each $P_\lambda$ is monic and irreducible
	over $E$, and each displayed chain is shorter
	than the full $\sigma$-orbit of $P_\lambda$.

	The chains are determined by the maximal consecutive
	components of the multiplicity superlevel sets,
	after removing the Frobenius-fixed part.
	This gives a canonical multiset of chains,
	unique up to reordering, and
	\[
	\sum_\lambda\deg P_\lambda
	=
	r-\deg\gcd(g,\sigma(g)).
	\]
	Moreover, if $C_g(n)$ is MRD, then
	$g_{\mathrm{fix}}=1$.
\end{corollary}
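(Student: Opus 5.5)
We work orbit by orbit with the superlevel-set data of Theorem~\ref{thm:orbit-gap-formula}. Fix an irreducible-factor orbit $\mathcal O=(P_0,\ldots,P_{t-1})$ with multiplicities $a_i$, and a level $u\ge1$.

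\emph{Splitting off the fixed part.} If $S_{\mathcal O,u}=\mathbb Z/t\mathbb Z$, the level contributes $\prod_{i}P_i$, which is $\sigma$-invariant. Since the fixed field of $\sigma$ is $\mathbb F_q$, this product lies in $\mathbb F_q[X]$. Conversely, a monic divisor $f\mid g$ lying in $\mathbb F_q[X]$ satisfies $v_{P_i}(f)=v_{P_{i+1}}(f)$ for all $i$. Hence $v_{P_i}(f)\le\min_i a_i$, and this number is exactly the number of full levels. So $g_{\mathrm{fix}}$ is the product, over orbits, of $\bigl(\prod_iP_i\bigr)^{\min_i a_i}$, and removing it leaves precisely the proper superlevel sets.

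\emph{Defining the chains.} Each proper nonempty $S=S_{\mathcal O,u}$ decomposes uniquely into maximal cyclic runs $\{c,c+1,\ldots,c+b-1\}$, each with $b<t$. Such a run gives the chain $P_c\,\sigma(P_c)\cdots\sigma^{b-1}(P_c)$, so we set $P_\lambda=P_c$ and $b_\lambda=b$. Taking the product over all runs and all levels recovers $g/g_{\mathrm{fix}}$, because position $i$ appears in exactly $a_i-\min a$ proper levels. Uniqueness up to reordering is automatic: the superlevel sets are determined by $g$, and the decomposition of a cyclic set into maximal runs is unique. The only care needed is when $S$ is cyclically connected across $t-1,0$, and taking runs modulo $t$ handles this.

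\emph{Degree identity.} For each proper level, the number of maximal runs equals the number of cyclic gaps, and each run carries weight $e_{\mathcal O}$. Therefore
\[
\sum_\lambda\deg P_\lambda=\sum_{\ell\ge1}\nu_\ell .
\]
By \eqref{eq:gap-increments} with $j=1$, this equals $D_1-D_0=\deg\operatorname{lcm}(g,\sigma g)-r$. Since $\deg\operatorname{lcm}(g,\sigma g)=2r-\deg\gcd(g,\sigma g)$, the sum equals $r-\deg\gcd(g,\sigma(g))$.

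\emph{MRD forces $g_{\mathrm{fix}}=1$.} This is the step needing an actual argument. Suppose $f=g_{\mathrm{fix}}\ne1$, so $f\in\mathbb F_q[X]$ with $\deg f\ge1$. Then $\sigma^i(g)$ is divisible by $f$ for every $i$. Write $g=fw$. Consider the codeword $\operatorname{coeff}_n(X^{s}g)$ for a suitable $s$. It need not have $\mathbb F_q$-coefficients, so instead we use the structure of the intersections. The code $C_f(n)\supseteq C_g(n)$ is defined over $\mathbb F_q$, and the $\sigma$-stable part of $\bigcap_i\sigma^iC_g$ equals the coefficient space of multiples of $L_j(g)$, whose degree stays at most $\deg f+\operatorname{lcm}$ of the shifts of $w$. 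This route is not obviously decisive.

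The cleaner route is the dual/parity check criterion. The parity-check space of $C_g(n)$ contains the $E$-span of the parity checks of $C_f(n)$, and these are defined over $\mathbb F_q$ and have rank $\deg f$. I would choose $H$ with its first $\deg f$ rows in $\mathbb F_q^{n}$, namely rows spanning $C_f(n)^\perp$. There is then an $A\in\mathbb F_q^{n\times r}$ of rank $r$ whose first $\deg f$ columns lie in the $\mathbb F_q$-kernel of those rows. This is possible since that kernel has $\mathbb F_q$-dimension $n-\deg f\ge1$. It makes $HA$ have a zero column, so $\det(HA)=0$, and Proposition~\ref{prop:mrd-criterion} shows $C_g(n)$ is not MRD.

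Equivalently, and more directly: $C_f(n)$ is $\mathbb F_q$-rational of dimension $n-\deg f$. It therefore meets $C_g(n)$, which has dimension $n-r$, in a way that yields a nonzero rank-one vector of $E\otimes C_f$. I would settle on the parity-check argument as the main obstacle to get exactly right.
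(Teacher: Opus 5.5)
Your treatment of $g_{\mathrm{fix}}$ is correct and is the same argument the paper gives. The same holds for the decomposition of the proper superlevel sets into maximal cyclic runs and for the degree identity via $\sum_\ell\nu_\ell=D_1-D_0$. The gap is the last step, MRD $\Rightarrow g_{\mathrm{fix}}=1$. You abandon the intersection route yourself, and the two remaining routes do not work as written.

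Let $F=\deg f$ and let $H_0\in\mathbb F_q^{F\times n}$ be the rows spanning $C_f(n)^\perp$. Placing the first $F$ columns of $A$ in $\ker_{\mathbb F_q}H_0$ only kills the entries of those columns in the top $F$ rows of $HA$. You get an $F\times F$ zero block, not a zero column, and such a block forces $\det(HA)=0$ only when $2F>r$. A genuine zero column would need a nonzero $a\in\mathbb F_q^n$ with $Ha=0$, i.e.\ a rank-one codeword. Your ``more direct'' reformulation asserts exactly this, and it is false in general. For instance, let $\alpha$ have degree $m\ge n$ and put $g=(X-1)(X-\alpha)$. Then $g_{\mathrm{fix}}=X-1$, but every $V\in\mathbb F_q[X]$ divisible by $g$ is divisible by $(X-1)M_\alpha$, which has degree $m+1>n$. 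So $C_g(n)$ has no rank-one codeword. Here $F=1$ and $r=2$, so your block argument yields nothing.

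The missing observation is that weight one is more than you need: any nonzero codeword of rank weight at most $r$ already contradicts $d_q(C_g(n))=r+1$. Write $g=fw$ with $f\in\mathbb F_q[X]$ and $\deg f\ge1$. Each coefficient $[X^s]g=\sum_{i+j=s}f_jw_i$ lies in $\langle w_0,\ldots,w_{\deg w}\rangle_{\mathbb F_q}$. Hence $\operatorname{wt}_q(\operatorname{coeff}_n(g))\le\deg w+1=r-\deg f+1\le r$. This is the paper's proof, and it uses precisely the codeword you considered and discarded because its coefficients are not in $\mathbb F_q$. In the example above, $\operatorname{coeff}_n(g)=(\alpha,-1-\alpha,1,0,\ldots,0)$ has weight $2=r$.

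Your parity-check idea can also be repaired. Since $\dim_{\mathbb F_q}\ker H_0=n-F\ge r-F+1$, take $r-F+1$ independent columns of $A$ in $\ker H_0$ and complete $A$ to rank $r$. Those columns of $HA$ are supported on the bottom $r-F$ rows, hence are $E$-linearly dependent, so $\det(HA)=0$.
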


\begin{proof}
	For each orbit $\mathcal O=(P_0,\ldots,P_{t-1})$,
	put $c_{\mathcal O}=\min_i a_i$. Since $\sigma$
	cyclically permutes the factors in each orbit,
	the product
	\[
	g_{\mathrm{fix}}
	=\prod_{\mathcal O}
	\left(\prod_{i=0}^{t-1}P_i\right)^{c_{\mathcal O}}
	\]
	is fixed by $\sigma$ and hence belongs to
	$\mathbb F_q[X]$.
	Conversely, every monic base-field divisor of
	$g$ is $\sigma$-invariant, so its multiplicity
	is constant along each orbit and cannot exceed
	$c_{\mathcal O}$. Thus $g_{\mathrm{fix}}$ is
	the greatest monic base-field divisor of $g$.
	
	Now subtract $c_{\mathcal O}$ from each
	multiplicity in $\mathcal O$. Since at least
	one resulting multiplicity is zero, every
	remaining nonempty superlevel set is a proper
	subset of the cyclic orbit. Decompose each
	such set into its maximal consecutive components.
	A component starting at $P_\lambda$ and
	containing $b_\lambda$ positions gives the chain
	\[
	P_\lambda\sigma(P_\lambda)\cdots
	\sigma^{b_\lambda-1}(P_\lambda).
	\]
	Each remaining factor occurrence is accounted
	for at exactly one component at each of its
	multiplicity levels. Hence the product of all
	these chains is $g/g_{\mathrm{fix}}$.
	Since the superlevel sets and their maximal
	components are uniquely determined, this gives
	the asserted canonical decomposition, unique
	up to reordering.
	
	Every nonempty proper subset of a cyclic orbit
	has as many maximal occupied components as
	complementary gaps. Each component contributes
	one chain whose initial irreducible factor has
	degree $e_{\mathcal O}$. Thus, weighting both
	counts by this degree and summing over all
	orbits and multiplicity levels, we obtain
	\[
	\sum_\lambda\deg P_\lambda
	=\sum_{\ell\ge1}\nu_\ell
	=D_1(g)-r
	=r-\deg\gcd(g,\sigma(g)),
	\]
	where the last two equalities follow from
	\eqref{eq:gap-increments} and
	$D_1(g)=2r-\deg\gcd(g,\sigma(g))$.
	
	Finally, suppose that $C_g(n)$ is MRD.
	If $\deg g_{\mathrm{fix}}>0$, write
	$g=g_{\mathrm{fix}}v$, where
	\[
	g_{\mathrm{fix}}=\sum_j h_jX^j,
	\qquad
	v=\sum_{i=0}^{\deg v}v_iX^i,
	\qquad h_j\in\F_q.
	\]
	For every $s$, the coefficient
	\[
	[X^s]g=\sum_{i+j=s}h_jv_i
	\]
	belongs to
	$\langle v_0,\ldots,v_{\deg v}\rangle_{\F_q}$.
	Consequently, the nonzero codeword
	$\coeff_n(g)$ satisfies
	\[
	\operatorname{wt}_q(\coeff_n(g))
	\le\deg v+1
	=r-\deg g_{\mathrm{fix}}+1
	\le r.
	\]
	This contradicts the MRD minimum distance
	$r+1$.
\end{proof}

Completing factors of $g$ to a polynomial over the base
field gives a further obstruction to the MRD property.

\begin{proposition}
	\label{prop:orbit-completion-obstruction}
	Let $F\in\mathbb F_q[X]$ be monic, and put
	$a=\deg\gcd(g,F)$. If $a>0$ and
	$\deg F-a<k$, then
	\[
	d_q(C_g(n))\le r-a+1.
	\]
	Consequently, if $C_g(n)$ is MRD, every monic
	irreducible polynomial $P\in\mathbb F_q[X]$
	satisfying $\gcd(g,P)\ne1$ must have
	\begin{equation}
		\deg P-\deg\gcd(g,P)\ge k.
		\label{eq:orbit-completion}
	\end{equation}
	
	In terms of factor orbits, this condition has
	the following equivalent form.
	Let $\mathcal O=(P_0,\ldots,P_{t-1})$ be any
	$\sigma$-orbit of monic irreducible factors
	over $E$ containing at least one factor of $g$.
	With $e_{\mathcal O}=\deg P_i$ and
	$a_i=v_{P_i}(g)$, we have
	\[
	e_{\mathcal O}
	\#\{i\in\mathbb Z/t\mathbb Z:a_i=0\}\ge k.
	\]
	Thus every occupied orbit must contain missing
	factors whose total degree is at least $k$,
	counting distinct factors without multiplicities.
\end{proposition}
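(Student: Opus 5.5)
The plan is to exhibit an explicit low-weight codeword. Put $G=\gcd(g,F)$, so $\deg G=a$, and write $g=Gv$ with $\deg v=r-a$. Since $F$ is monic over $\F_q$ and $G\mid F$, we have $F=GH$ with $H$ monic of degree $\deg F-a$.

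\textbf{The codeword.} Consider $c=vF=gH$. The hypothesis $\deg H=\deg F-a<k$ gives $\deg(gH)<r+k=n$, so $c=\coeff_n(gH)\in C_g(n)$, and $c\ne0$. As in the last step of the proof of Corollary~\ref{cor:canonical-multichains}, writing $F=\sum_j f_jX^j$ with $f_j\in\F_q$, each coefficient $[X^s](vF)=\sum_{i+j=s}f_jv_i$ lies in $\langle v_0,\ldots,v_{r-a}\rangle_{\F_q}$. Hence $\operatorname{wt}_q(c)\le r-a+1$, which is the first claim.

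\textbf{The MRD consequence.} If $C_g(n)$ is MRD, its minimum distance is $r+1>r-a+1$ whenever $a>0$. So every monic $F\in\F_q[X]$ with $a=\deg\gcd(g,F)>0$ must satisfy $\deg F-a\ge k$. Taking $F=P$ irreducible with $\gcd(g,P)\ne1$ gives \eqref{eq:orbit-completion}.

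\textbf{The orbit form.} Let $P\in\F_q[X]$ be monic irreducible. Its factorization over $E$ is a product of distinct irreducibles forming a single $\sigma$-orbit $\mathcal O=(P_0,\ldots,P_{t-1})$. Indeed, $P$ is $\sigma$-invariant and squarefree, and the product of one $\sigma$-orbit of its $E$-factors is already $\sigma$-invariant, hence lies in $\F_q[X]$; irreducibility of $P$ then forces $P$ to equal that product. Conversely, every $\sigma$-orbit of monic irreducibles over $E$ has product equal to such a $P$.

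For this orbit, $\deg P=te_{\mathcal O}$ and $\deg\gcd(g,P)=e_{\mathcal O}\#\{i:a_i\ge1\}$, since $P$ is squarefree. Therefore
\[
\deg P-\deg\gcd(g,P)=e_{\mathcal O}\,\#\{i:a_i=0\},
\]
and the condition $\gcd(g,P)\ne1$ is exactly that the orbit contains a factor of $g$. This gives the equivalence between \eqref{eq:orbit-completion} and the orbit inequality.

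The only step needing care is this identification of $\F_q$-irreducibles with $\sigma$-orbit products over $E$. It is a standard Galois-descent fact: a polynomial over $E$ fixed by $\sigma$ has coefficients in $\F_q$. The remaining steps are routine.
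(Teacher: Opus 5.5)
Your proposal is correct and follows the paper's proof essentially step for step. You use the same low-weight codeword $(g/\gcd(g,F))\,F = g\,(F/\gcd(g,F))$, whose coefficients lie in the $\F_q$-span of the coefficients of $g/\gcd(g,F)$. You also use the same identification of monic $\F_q$-irreducibles with $\sigma$-orbit products over $E$ to turn \eqref{eq:orbit-completion} into the missing-factor inequality.

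The converse you state without proof, that every orbit product is $\F_q$-irreducible, does follow in one line from your first direction: apply it to an $\F_q$-irreducible factor of the orbit product that contains $P_0$. The paper instead proves it directly from the transitivity of $\sigma$ on the orbit.
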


\begin{proof}
	Let $h=\gcd(g,F)$. The polynomial
	\[
	f=\frac{g}{h}F=g\frac{F}{h}
	\]
	is a nonzero multiple of $g$ of degree
	$r+\deg F-a<n$, so $\coeff_n(f)\in C_g(n)$.
	As $F\in\mathbb F_q[X]$, every coefficient of $f$ lies
	in the $\mathbb F_q$-span of the coefficients of $g/h$.
	Therefore
	\[
	\operatorname{wt}_q(\coeff_n(f))
	\le\deg(g/h)+1=r-a+1.
	\]
	For an MRD code, this forces
	$\deg F-\deg\gcd(g,F)\ge k$ whenever the gcd has
	positive degree. Taking $F=P$ proves
	\eqref{eq:orbit-completion}.

	For an occupied factor orbit, the product
	\[
	P_{\mathcal O}=\prod_{i=0}^{t-1}P_i
	\]
	is fixed by $\sigma$ and lies in $\mathbb F_q[X]$.
	It is irreducible over $\mathbb F_q$: a nonconstant
	proper base-field factor would determine a nonempty
	proper $\sigma$-invariant subset of
	$\{P_0,\ldots,P_{t-1}\}$, contrary to transitivity.
	Since $P_{\mathcal O}$ is squarefree over $E$,
	\[
	\deg P_{\mathcal O}=t e_{\mathcal O},
	\qquad
	\deg\gcd(g,P_{\mathcal O})
	=e_{\mathcal O}\#\{i:a_i>0\}.
	\]
	Substitution in \eqref{eq:orbit-completion} gives
	the stated missing-factor inequality.
	Conversely, if an irreducible $P\in\mathbb F_q[X]$
	meets $g$, the orbit product of an irreducible
	$E$-factor of $\gcd(g,P)$ belongs to $\mathbb F_q[X]$
	and divides $P$, so it equals $P$. This proves the
	equivalence of the two formulations.
\end{proof}

\begin{corollary}
	\label{cor:no-base-factor-before-vanishing}
	Suppose that $C_g(n)$ is MRD. If $D_j(g)<n$,
	then $L_j(g)$ has no nonconstant factor in
	$\mathbb F_q[X]$.
\end{corollary}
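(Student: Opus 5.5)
We argue by contradiction, using the argument of Proposition~\ref{prop:orbit-completion-obstruction} with $F$ replaced by a base-field factor of $L_j(g)$. Suppose $C_g(n)$ is MRD, $D_j(g)<n$, and $F\in\mathbb F_q[X]$ is a monic nonconstant divisor of $L_j(g)$. Replacing $F$ by one of its irreducible factors, we may assume that $F$ is irreducible over $\mathbb F_q$.

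First, $F$ meets $g$. Over $E$, $F$ factors as the product of a full $\sigma$-orbit $\mathcal O=(P_0,\ldots,P_{t-1})$ of distinct irreducibles; this is the converse part of the proof of Proposition~\ref{prop:orbit-completion-obstruction}. Since $F\mid L_j(g)$, some $P_i$ divides some $\sigma^s(g)$ with $0\le s\le j$. Then $\sigma^{-s}(P_i)=P_{i-s}$ divides $g$. Hence $\mathcal O$ is an occupied orbit and $a=\deg\gcd(g,F)>0$.

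Next, we bound the missing part of the orbit. The missing-factor inequality of Proposition~\ref{prop:orbit-completion-obstruction} requires
\[
e_{\mathcal O}\#\{i:a_i=0\}=\deg F-a\ge k.
\]
Because $F$ is squarefree and $F\mid L_j(g)$, every $P_i$ has $v_{P_i}(L_j(g))\ge1$. Comparing with $g$, each missing position $a_i=0$ contributes at least $e_{\mathcal O}$ to the excess $\deg L_j(g)-\deg g$, since the multiplicity there rises from $0$ to at least $1$, and multiplicities never decrease. Therefore
\[
D_j(g)-r\ \ge\ e_{\mathcal O}\#\{i:a_i=0\}=\deg F-a\ \ge\ k.
\]
This gives $D_j(g)\ge r+k=n$, contradicting $D_j(g)<n$.

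Equivalently, the polynomial $f=(g/\gcd(g,F))F$ divides $L_j(g)$, so $\deg f\le D_j(g)<n$. Hence $\coeff_n(f)$ is a codeword of rank weight at most $r-a+1\le r$, which contradicts the MRD property directly.

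The only step needing care is the excess count, namely that $\deg L_j(g)\ge\deg\operatorname{lcm}(g,F)=r+\deg F-a$. This holds because both $g$ and $F$ divide $L_j(g)$, and it makes the final contradiction immediate.
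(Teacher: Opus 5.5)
Your proof is correct and follows the paper's argument. Both reduce to a monic irreducible $P\in\mathbb F_q[X]$ dividing $L_j(g)$, pull back an $E$-irreducible factor by $\sigma^{-s}$ to get $\deg\gcd(g,P)>0$, and then combine $D_j(g)\ge\deg\operatorname{lcm}(g,P)=r+\deg P-a$ with the orbit-completion inequality $\deg P-a\ge k$. Your orbit-position excess count and your direct codeword argument with $f=\operatorname{lcm}(g,F)$ are simply two unpackings of that same degree inequality.
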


\begin{proof}
	Suppose, to the contrary, that a monic irreducible
	polynomial $P\in\mathbb F_q[X]$ divides $L_j(g)$.
	We first show that
	\[
	a:=\deg\gcd(g,P)>0.
	\]
	Choose an irreducible factor $Q$ of $P$ over $E$.
	Since $P\mid L_j(g)$, we have $Q\mid L_j(g)$,
	so $Q$ divides $\sigma^i(g)$ for some $0\le i\le j$.
	Applying $\sigma^{-i}$, and using $\sigma(P)=P$,
	shows that $\sigma^{-i}(Q)$ divides both $g$ and $P$.
	Hence $a>0$.
	
	Now both $g$ and $P$ divide $L_j(g)$, so
	\[
	D_j(g)\ge\deg\operatorname{lcm}(g,P)
	=r+\deg P-a.
	\]
	By \eqref{eq:orbit-completion}, the MRD property gives
	$\deg P-a\ge k$. Consequently,
	\[
	D_j(g)\ge r+k=n,
	\]
	contrary to the assumption.
\end{proof}

\section{Complete realization of admissible profiles}
\label{sec:profile-realization}

We now realize every sequence satisfying \eqref{eq:admissible-losses}.
The construction first prescribes the Frobenius root pattern; a
degree argument then ensures the MRD property.

\begin{lemma}
\label{lem:fixed-pattern-mrd}
Fix $q$, integers $1\le r<n$, and nonnegative integers
$e_1<\cdots<e_r$.  Put $k=n-r$ and
\[
 B_T=\sum_{i=1}^r(k+i-1)q^{e_i}.
\]
For every integer $m>B_T$ and every
$\alpha\in\mathbb F_{q^m}$ of degree $m$ over $\mathbb F_q$, the
ordinary polynomial
\[
 g_T(X)=\prod_{i=1}^r(X-\alpha^{q^{e_i}})
\]
defines an MRD code
\[
 C_{g_T}(n)=\{\coeff_n(u g_T):\deg u<k\}.
\]
\end{lemma}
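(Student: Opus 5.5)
We apply Proposition~\ref{prop:mrd-criterion} to an explicit parity-check matrix of $C_{g_T}(n)$ and show that every determinant it produces is a nonzero polynomial in $\alpha$ of degree at most $B_T$. Since $\alpha$ has degree $m>B_T$, none of these determinants can vanish.

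\emph{Parity checks.} A polynomial $f$ of degree less than $n$ lies in $C_{g_T}(n)$ exactly when $f(\beta_i)=0$ for all roots $\beta_i=\alpha^{q^{e_i}}$. These roots are pairwise distinct because $e_i<e_r\le B_T<m$. Hence the $r\times n$ matrix $H$ with entries $H_{i,s}=\beta_i^{\,s}$ ($0\le s\le n-1$) has $C_{g_T}(n)=\ker H$. It has rank $r$, since its first $r$ columns form a Vandermonde matrix in distinct nodes.

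\emph{Reduction to one determinant.} Let $A\in\F_q^{n\times r}$ have rank $r$, with columns $a^{(1)},\ldots,a^{(r)}$. Put $x_j=\sum_s a^{(j)}_s\alpha^s$ for each column. Because the coefficients are in $\F_q$, we have $\beta_i^{\,s}=(\alpha^s)^{q^{e_i}}$, and therefore
\[
(HA)_{i,j}=\sum_s a^{(j)}_s(\alpha^{s})^{q^{e_i}}=x_j^{\,q^{e_i}}.
\]
So $\det(HA)$ is the generalized Moore determinant $\det\bigl(x_j^{q^{e_i}}\bigr)$.

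\emph{The polynomial $\Phi_A$.} Treat $\alpha$ as an indeterminate $Y$ and write $x_j(Y)=\sum_s a^{(j)}_sY^s\in\F_q[Y]$. Since coefficients are fixed by Frobenius, $x_j(Y)^{q^{e}}=x_j(Y^{q^{e}})$. This gives
\[
\Phi_A(Y)=\det\bigl(x_j(Y)^{q^{e_i}}\bigr)_{i,j}\in\F_q[Y],
\qquad \det(HA)=\Phi_A(\alpha).
\]

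\emph{Degree bound.} Column operations over $\F_q$ on $A$ change $\Phi_A$ only by a nonzero scalar in $\F_q$. Using Gaussian elimination by leading degree, we may assume the degrees $\deg x_j=c_j$ are distinct, and after reordering $c_1>c_2>\cdots>c_r$. Each $c_j$ is at most $n-1$ and the $c_j$ are distinct, so $c_j\le n-j$. Expanding the determinant, every term has degree at most
\[
\max_{\pi}\sum_i c_{\pi(i)}q^{e_i}.
\]
By the rearrangement inequality, this maximum is attained when the largest $c$ is paired with the largest $q^{e_i}$, i.e.\ $c_{r+1-i}$ with $q^{e_i}$. Its value is
\[
\sum_i c_{r+1-i}q^{e_i}\le\sum_i (n-r-1+i)q^{e_i}=\sum_i(k+i-1)q^{e_i}=B_T.
\]

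\emph{Nonvanishing of $\Phi_A$ (main obstacle).} The key step is to show $\Phi_A\ne0$ as a polynomial. My first approach is to show the top pairing is achieved by a unique permutation, so it cannot cancel. The leading-degree monomial of $\Phi_A$ is then $\prod_i Y^{c_{r+1-i}q^{e_i}}$ with a coefficient $\pm1$ times a product of leading coefficients, which is nonzero. Uniqueness holds because, when both the $c_j$ and the $q^{e_i}$ are strictly ordered, the strict rearrangement inequality says any other permutation gives a strictly smaller sum. The only subtlety is that exponents could collide modulo nothing, since we work in $\F_q[Y]$ and not modulo $Y^{q^m}-Y$, so there is no collision issue. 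Hence $\Phi_A$ is a nonzero polynomial of degree at most $B_T$.

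\emph{Conclusion.} Suppose $\Phi_A(\alpha)=0$. Then the minimal polynomial of $\alpha$, which has degree $m$, divides $\Phi_A$. This forces $m\le B_T$, contradicting $m>B_T$. Therefore $\det(HA)\ne0$ for every rank-$r$ matrix $A$, and Proposition~\ref{prop:mrd-criterion} shows $C_{g_T}(n)$ is MRD.
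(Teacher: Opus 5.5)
Your proposal is correct and follows essentially the same route as the paper. Both arguments use the evaluation parity-check matrix and reduce, via Proposition~\ref{prop:mrd-criterion}, to the polynomial $\det\bigl(x_j(Y)^{q^{e_i}}\bigr)\in\F_q[Y]$; both then take an echelon basis with distinct leading degrees (your decreasing ordering with $c_j\le n-j$ is the paper's $d_i\le k+i-1$ reindexed), apply the strict rearrangement inequality to get a unique top term of degree at most $B_T<m$, and finish with the degree-$m$ minimal polynomial of $\alpha$. The only point you leave implicit is $m\ge n$, the standing hypothesis under which Proposition~\ref{prop:mrd-criterion} is stated, and it follows at once from $m>B_T\ge(n-1)q^{e_r}$, as the paper notes.
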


\begin{proof}
	Since
	\[
	m>B_T\ge(n-1)q^{e_r}
	\ge\max\{n-1,e_r\},
	\]
	we have $m\ge n$ and $e_r<m$. As $\alpha$
	has degree $m$ over $\F_q$, the roots
	$\alpha^{q^{e_1}},\ldots,\alpha^{q^{e_r}}$
	of $g_T$ are distinct. A polynomial of degree
	less than $n$ is divisible by $g_T$ precisely
	when it vanishes at all these roots. Thus
	evaluation gives a parity-check matrix
	\[
	H=(\alpha^{j q^{e_i}})_{1\le i\le r,\ 0\le j<n},
	\]
	whose first $r$ columns form a nonsingular
	Vandermonde matrix. By
	Proposition~\ref{prop:mrd-criterion},
	it remains to prove $\det(HA)\ne0$ for every
	rank-$r$ matrix $A\in\F_q^{n\times r}$.
	
	Associate to the columns of $A=(a_{ij})$
	the linearly independent polynomials
	\[
	P_j(Z)=\sum_{i=0}^{n-1}a_{ij}Z^i
	\qquad (1\le j\le r),
	\]
	and put
	\[
	F(Z)=\det(P_j(Z)^{q^{e_i}})_{i,j=1}^r.
	\]
	Since the coefficients of the $P_j$ lie
	in $\F_q$, we have
	\[
	(HA)_{ij}=P_j(\alpha)^{q^{e_i}},
	\]
	and hence $\det(HA)=F(\alpha)$.
	
	An invertible $\F_q$-change of basis
	multiplies $F$ by a nonzero scalar.
	We may therefore choose an echelon basis
	with leading degrees
	\[
	0\le d_1<\cdots<d_r<n,
	\qquad d_i=\deg P_i.
	\]
	There must be $r-i$ distinct pivot degrees
	greater than $d_i$ but smaller than $n$.
	Consequently,
	\[
	d_i+(r-i)\le n-1,
	\qquad\text{so}\qquad d_i\le k+i-1.
	\]
	
	In the determinant expansion, the term
	indexed by $\pi\in S_r$ has degree
	\[
	\sum_{i=1}^r d_{\pi(i)}q^{e_i}.
	\]
	Both $(d_i)$ and $(q^{e_i})$ are strictly
	increasing, so the strict rearrangement
	inequality makes the identity permutation
	the unique maximizer. If $c_i\ne0$ is the
	leading coefficient of $P_i$, this term
	has leading coefficient
	$\prod_i c_i^{q^{e_i}}\ne0$, while every
	other permutation has strictly smaller
	degree. Therefore no cancellation of
	the highest-degree term is possible, and
	\[
	F\ne0,\qquad
	\deg F=\sum_{i=1}^r d_iq^{e_i}
	\le\sum_{i=1}^r(k+i-1)q^{e_i}
	=B_T<m.
	\]
	Since the minimal polynomial of $\alpha$
	over $\F_q$ has degree $m$, the nonzero
	polynomial $F\in\F_q[Z]$ cannot vanish
	at $\alpha$. Hence $\det(HA)=F(\alpha)\ne0$
	for every rank-$r$ matrix $A$, completing
	the MRD test.
\end{proof}

\begin{remark}
	The parity-check matrix in the preceding proof
	also gives a linearized-polynomial evaluation
	description of the dual code.
	Indeed, its rows are the evaluations of
	$Z^{q^{e_1}},\ldots,Z^{q^{e_r}}$ at
	$1,\alpha,\ldots,\alpha^{n-1}$. Hence
	\[
	C_{g_T}(n)^\perp
	=
	\left\{
	\bigl(f(1),f(\alpha),\ldots,f(\alpha^{n-1})\bigr):
	f(Z)=\sum_{i=1}^r b_i Z^{q^{e_i}},
	\quad b_i\in E
	\right\},
	\]
	where $\perp$ denotes the usual Euclidean dual.
	
	This is a generalized Moore evaluation model
	on the $\F_q$-subspace
	\[
	W=\langle 1,\alpha,\ldots,\alpha^{n-1}\rangle_{\F_q}
	\subseteq E,
	\]
	which has dimension $n$ because $\alpha$ has
	degree at least $n$ over $\F_q$.
	The MRD argument establishes the required
	determinant nonvanishing for every
	$\F_q$-independent $r$-tuple in $W$.
	It does not assert the same property for
	arbitrary $\F_q$-independent $r$-tuples in $E$.
	See \cite{NPH2020} for the general evaluation framework.
\end{remark}

\begin{theorem}
	\label{thm:universal-profile-realization}
	Fix a prime power $q$ and integers $1\le k<n$, and
	put $r=n-k$. Let
	\[
	k=d_0>d_1>\cdots>d_{h-1}>d_h=0
	\]
	be any integer sequence whose losses
	$\delta_j=d_{j-1}-d_j$ satisfy
	\eqref{eq:admissible-losses}.
	
	Then there exists a set of nonnegative integers
	\[
	T=\{0=e_1<e_2<\cdots<e_r\},
	\qquad e_r=n-h-1,
	\]
	depending only on the prescribed sequence, with
	the following property: for every integer $m>B_T$
	and every $\alpha\in\mathbb F_{q^m}$ of degree $m$
	over $\mathbb F_q$, the polynomial $g_T$ defined
	in Lemma~\ref{lem:fixed-pattern-mrd} generates
	an MRD code $C_{g_T}(n)$ satisfying
	\[
	d_j(C_{g_T}(n),\sigma)=
	\begin{cases}
		d_j, & 0\le j\le h,\\
		0,   & j>h,
	\end{cases}
	\qquad \sigma(x)=x^q.
	\]
	Here $B_T$ is the degree bound from
	Lemma~\ref{lem:fixed-pattern-mrd}.
	When $r=1$, the set is simply $T=\{0\}$.
	
	Consequently, the conditions in
	Proposition~\ref{prop:necessary-profile}
	are both necessary and sufficient for the
	realization of Frobenius intersection profiles
	of MRD polynomial-multiple codes over
	sufficiently large extensions.
\end{theorem}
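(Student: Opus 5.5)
The plan is to combine the factor-orbit gap formula of Theorem~\ref{thm:orbit-gap-formula} with the MRD criterion of Lemma~\ref{lem:fixed-pattern-mrd}. We prescribe the cyclic gaps of $T$ so that \eqref{eq:gap-increments} reproduces the prescribed losses. The MRD property then comes for free from Lemma~\ref{lem:fixed-pattern-mrd}, because $m>B_T$.

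\emph{Step 1: the exponent set.} Let $\lambda_1\ge\cdots\ge\lambda_{\delta_1}\ge1$ be the conjugate partition of $\boldsymbol\delta$. Thus $\#\{i:\lambda_i\ge j\}=\delta_j$ for $j\ge1$, $\lambda_1=h$, and $\sum_i\lambda_i=k$. Since $\delta_1\le r$, we can split the $r$ exponents into $\delta_1$ nonempty runs of consecutive integers. We start at $e_1=0$ and separate consecutive runs by internal gaps of lengths $\lambda_2,\ldots,\lambda_{\delta_1}$, in any order. Then
\[
e_r=(r-1)+\sum_{i\ge2}\lambda_i=r-1+k-h=n-h-1,
\]
as the statement requires. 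For $r=1$ we have $\delta_1=1$ and $h=k$, so $T=\{0\}$ and $e_r=n-k-1=0$. The set $T$ depends only on $\boldsymbol\delta$.

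\emph{Step 2: computing the profile.} Take $m>B_T$ and $\alpha$ of degree $m$. As in the proof of Lemma~\ref{lem:fixed-pattern-mrd}, this gives $m\ge n>e_r$. Hence the roots $\alpha^{q^{e_i}}$ are distinct linear factors lying in a single $\sigma$-orbit of length $m$, and $g_T$ is squarefree. The only superlevel set is $S=T\subseteq\mathbb Z/m\mathbb Z$, with all weights equal to one. Its cyclic gaps are the $\delta_1-1$ internal gaps together with the wrap-around gap of length $m-1-e_r=m-n+h\ge h=\lambda_1$. Consequently, for every $1\le j\le h$, the number of gaps of length at least $j$ equals $\#\{i:\lambda_i\ge j\}=\delta_j$. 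Then \eqref{eq:gap-increments} gives $D_j-D_{j-1}=\delta_j$ for $1\le j\le h$, and $D_0=r$. So $D_j=r+\sum_{i\le j}\delta_i=n-d_j$ for $0\le j\le h$. In particular $D_h=n$. Since $D_j$ is nondecreasing, $D_j\ge n$ for $j>h$. Formula \eqref{eq:global-lcm} then yields $d_j(C_{g_T}(n),\sigma)=d_j$ for $j\le h$ and $0$ for $j>h$.

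\emph{Step 3: MRD and conclusion.} Lemma~\ref{lem:fixed-pattern-mrd} applies directly because $m>B_T$, so $C_{g_T}(n)$ is MRD. Necessity of the conditions is Proposition~\ref{prop:necessary-profile}, which gives the final equivalence.

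The only delicate point is the bookkeeping in Step 2. The wrap-around gap must play the role of the longest part $\lambda_1=h$, and its length $m-n+h$ must be at least $h$. This is exactly where $m\ge n$ and the choice $e_r=n-h-1$ are used. At index $j=h$ only the inequality $D_h\ge n$ is needed, and in fact equality holds.
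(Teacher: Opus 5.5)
Your proof is correct and follows the paper's argument. Your internal gaps $\lambda_2,\ldots,\lambda_{\delta_1}$ are exactly the paper's multiset: $\delta_j-\delta_{j+1}$ gaps of length $j$ for $j<h$, plus $\delta_h-1$ gaps of length $h$. The wrap-around gap of length $m-n+h\ge h$ plays the role of the remaining part $\lambda_1=h$, and the MRD property comes directly from Lemma~\ref{lem:fixed-pattern-mrd}, as in the paper. The conjugate-partition phrasing is just cleaner bookkeeping, but to make $T$ depend only on the sequence, fix one choice of run lengths and gap order, as the paper does.
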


\begin{proof}
	We construct a set of $r$ occupied positions
	on the nonnegative integer line. The internal
	gaps, together with one external gap of length
	at least $h$, will give exactly $\delta_j$
	gaps of length at least $j$ for $1\le j\le h$.
	
	\emph{Step 1: Prescribing the internal gaps.}
	We reserve one contribution to each gap count
	for the external gap. For each $1\le j<h$,
	take $\delta_j-\delta_{j+1}$ internal gaps
	of length $j$, and take an additional
	$\delta_h-1$ internal gaps of length $h$.
	These numbers are nonnegative by
	\eqref{eq:admissible-losses}.
	
	The total number of internal gaps is
	\[
	\sum_{j=1}^{h-1}
	(\delta_j-\delta_{j+1})+(\delta_h-1)
	=\delta_1-1,
	\]
	and their total length is
	\begin{equation}\label{eq:internal-gap-total}
		\begin{aligned}
			&\sum_{j=1}^{h-1}
			j(\delta_j-\delta_{j+1})
			+h(\delta_h-1)\\
			&\qquad=\sum_{j=1}^h\delta_j-h
			=k-h.
		\end{aligned}
	\end{equation}
	
	\emph{Step 2: Choosing the occupied runs.}
	The $\delta_1-1$ internal gaps separate
	$\delta_1$ occupied runs. Since
	$\delta_1\le r$, we may choose their lengths as
	\[
	t_1=r-\delta_1+1,
	\qquad
	t_2=\cdots=t_{\delta_1}=1,
	\]
	with only the first run when $\delta_1=1$.
	These lengths are positive and sum to $r$.
	
	\emph{Step 3: Constructing the exponent set.}
	Starting at position $0$, place these runs
	successively, inserting the prescribed internal
	gaps in nondecreasing order of length between
	successive runs. Their order does not affect
	the required gap-length counts.
	Let $T$ be the resulting set of occupied
	positions.
	
	By construction, $T$ contains $r$ elements,
	starts at $0$, and has $k-h$ missing positions
	between its endpoints. Hence its largest
	element is
	\[
	L=(r-1)+(k-h)=n-h-1.
	\]
	Thus $T$ has the required form and depends
	only on the prescribed loss sequence.
	The construction also covers $r=1$, in which
	case all losses equal $1$ and $T=\{0\}$.
	
	Now let $m>B_T$ and choose
	$\alpha\in\mathbb F_{q^m}$ of degree $m$.
	By Lemma~\ref{lem:fixed-pattern-mrd},
	the resulting code $C_{g_T}(n)$ is MRD,
	and $m\ge n$.
	
	Write
	\[
	D_j=\deg\operatorname{lcm}
	(g_T,\sigma(g_T),\ldots,\sigma^j(g_T)).
	\]
	The roots of $g_T$ lie in a single Frobenius
	orbit of length $m$, and $g_T$ has one
	multiplicity level. By
	Theorem~\ref{thm:orbit-gap-formula},
	$D_j-D_{j-1}$ equals the number of cyclic
	gaps whose lengths are at least $j$.
	
	Since $L<n\le m$, the exponents in $T$
	represent distinct positions in
	$\mathbb Z/m\mathbb Z$. Besides the prescribed
	internal gaps, there is exactly one external
	gap, running cyclically from $L$ back to $0$.
	Its length is
	\[
	m-L-1=m-n+h\ge h.
	\]
	Thus it contributes one to each increment
	$D_j-D_{j-1}$ for $1\le j\le h$.
	
	Combining the internal and external
	contributions, we obtain, for $1\le j\le h$,
	\[
	\begin{aligned}
		D_j-D_{j-1}
		&=1+\sum_{a=j}^{h-1}
		(\delta_a-\delta_{a+1})+(\delta_h-1)\\
		&=\delta_j.
	\end{aligned}
	\]
	Since $D_0=r$, summing these increments gives
	\[
	D_j
	=r+\sum_{a=1}^j\delta_a
	=r+k-d_j
	=n-d_j
	\qquad(0\le j\le h).
	\]
	
	Finally, formula~\eqref{eq:global-lcm}
	yields
	\[
	d_j(C_{g_T}(n),\sigma)
	=\max\{0,n-D_j\}
	=d_j
	\qquad(0\le j\le h).
	\]
	In particular, the intersection vanishes at
	$j=h$. Since the intersection spaces form
	a decreasing sequence, they remain zero for
	all $j>h$. This proves the prescribed profile.
\end{proof}

Since $e_i\le n-2$ and $k+i-1\le n-1$, the construction also
gives the preliminary uniform bound
\[
B_T\le(n-1)\sum_{e=0}^{n-2}q^e
=(n-1)\frac{q^{n-1}-1}{q-1}.
\]
Every extension degree larger than this bound realizes every
admissible profile. Section~\ref{sec:profile-field-bounds}
substantially improves it by refining the determinant argument.

\begin{example}
	\label{ex:profile-construction}
	Let $r=3$, $k=6$, and $n=9$. For losses $(3,2,1)$,
	Theorem~\ref{thm:universal-profile-realization} prescribes
	two internal gaps of lengths $1$ and $2$ between
	three singleton occupied runs. This gives the exponent
	set $T=\{0,2,5\}$ from Example~\ref{ex:gap-count}, with
	\[
	B_T=6+7q^2+8q^5.
	\]
	For every integer $m>B_T$ and every degree-$m$ parameter
	$\alpha$, the resulting code $C_{g_T}(9)$ is an $[9,6,4]$
	MRD code over $\mathbb F_{q^m}$, where the parameters
	denote length, dimension, and minimum rank distance.
	Its Frobenius intersection profile is $(6,3,1,0)$.
	Smaller extension bounds for this example are obtained
	in Section~\ref{sec:profile-field-bounds}.
\end{example}

\begin{remark}
	The necessary numerical conditions in
	Proposition~\ref{prop:necessary-profile} remain valid
	for any generator of
	$\operatorname{Gal}(\mathbb F_{q^m}/\mathbb F_q)$.
	In particular, $\sigma_s(x)=x^{q^s}$ is such a
	generator whenever $\gcd(s,m)=1$.
	
	The realization construction also extends to these
	automorphisms. Fix a positive integer $s$ and an
	admissible profile, and let
	$T=\{e_1<\cdots<e_r\}$ be the exponent set
	constructed in
	Theorem~\ref{thm:universal-profile-realization}.
	Define
	\[
	g_{T,s,\alpha}(X)
	=\prod_{i=1}^r(X-\alpha^{q^{s e_i}}),
	\qquad
	B_{T,s}=\sum_{i=1}^r(k+i-1)q^{s e_i}.
	\]
	For every $m>B_{T,s}$ with $\gcd(s,m)=1$,
	and every $\alpha\in\mathbb F_{q^m}$ of degree $m$,
	the argument of Lemma~\ref{lem:fixed-pattern-mrd}
	shows that $C_{g_{T,s,\alpha}}(n)$ is MRD.
	
	Moreover, $\sigma_s$ acts on the roots by advancing
	one position in the exponent pattern $T$.
	Thus the same gap-count argument gives the prescribed
	intersection profile with respect to $\sigma_s$.
	Consequently, every admissible profile is realized
	under $\sigma_s$ over all sufficiently large extensions
	whose degrees are coprime to $s$.
	
	The quantitative bounds in Section~\ref{sec:profile-field-bounds} and the
	every-extension threshold considered below are
	formulated for the ordinary $q$-Frobenius
	$\sigma=\sigma_1$.
\end{remark}

\section{Effective extension bounds and parameter density}
\label{sec:profile-field-bounds}

Fix a prime power $q$, integers $1\le r<n$, and an
exponent set
\[
T=\{0=e_1<\cdots<e_r=L\}\subseteq\mathbb Z_{\ge0},
\qquad k=n-r.
\]
We seek two types of bounds: an \emph{all-parameter bound},
ensuring that every degree-$m$ seed yields an MRD code,
and an \emph{existence bound}, ensuring that at least
one such seed works.

Following the subspace-indexed determinant approach
of \cite[Section~4, Proposition~4.2]{NeriEtAl2018},
we remove the ordinary Moore factor, which is nonzero
on the test subspaces when $m\ge n$. This improves
the direct degree bound $B_T$ to $\Delta_T$ and leads
to the counting budgets $S_T$ and $S_T(m)$.
Under $m\ge n$ and $m>L$, the sufficient conditions
are $m>\Delta_T$ for every degree-$m$ seed and
$D_q(m)>S_T(m)$ for at least one such seed.
The latter yields the uniform extension bounds below.

\subsection{Moore quotients and exact degrees}

To bound these degrees, define
\begin{equation}
	w_i=q^{e_i}-q^{i-1},
	\qquad
	\Delta_T=\sum_{i=1}^r(k+i-1)w_i.
	\label{eq:profile-quotient-degree}
\end{equation}
Since $e_i\ge i-1$, we have $w_i\ge0$.
The lemma below shows that $\Delta_T$ bounds the
degrees of the specialized Moore quotients.

The underlying Moore divisibility is classical
\cite{BartoliZhou2020}. Here we obtain an exact
degree formula in terms of the leading degrees
of the polynomial subspaces. We use reduced
echelon bases $P_1,\ldots,P_r$ with
$d_1=\deg P_1<\cdots<d_r=\deg P_r$,
where each $P_i$ is monic and has zero
coefficient at $Z^{d_j}$ whenever $j\ne i$.

\begin{lemma}
\label{lem:remove-moore-factor}
Let $U$ be an $r$-dimensional $\mathbb F_q$-subspace of
$\mathbb F_q[Z]_{<n}$. For any ordered basis $P_1,\ldots,P_r$ of $U$,
the quotient
\begin{equation}
 Q_{T,U}(Z)=
 \frac{\det(P_j(Z)^{q^{e_i}})_{i,j=1}^r}
      {\det(P_j(Z)^{q^{i-1}})_{i,j=1}^r}
 \label{eq:moore-specialized-quotient}
\end{equation}
is a nonzero polynomial in $\mathbb F_q[Z]$, independent of the
choice of basis. If $d_1<\cdots<d_r$ are the successive leading
degrees in an echelon basis of $U$, then
\begin{equation}
 \deg Q_{T,U}=\sum_{i=1}^r d_iw_i\leq\Delta_T.
 \label{eq:exact-quotient-degree}
\end{equation}
If $\alpha$ has degree at least $n$ over $\mathbb F_q$, the
denominator in \eqref{eq:moore-specialized-quotient} is nonzero at
$\alpha$.
\end{lemma}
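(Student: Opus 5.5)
The argument has four parts: divisibility of the numerator by the Moore determinant, independence of the basis, an exact degree count, and nonvanishing of the denominator at $\alpha$.

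\textbf{Divisibility.} Write $N(Z)=\det(P_j(Z)^{q^{e_i}})$ and $M(Z)=\det(P_j(Z)^{q^{i-1}})$. Since the $P_j$ have coefficients in $\F_q$, we have $P_j(Z)^{q^s}=P_j(Z^{q^s})$, so both determinants are specializations at $x_j=P_j(Z)$ of the generic determinants
\[
\det(x_j^{q^{e_i}}),\qquad \det(x_j^{q^{i-1}})
\]
in $\F_q[x_1,\ldots,x_r]$. The generic Moore determinant factors as a nonzero constant times the product of all monic-leading $\F_q$-linear forms $x_j+\sum_{l>j}c_lx_l$. Each such form divides the generic generalized determinant. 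Indeed, substituting $x_j=-\sum_{l>j}c_lx_l$ makes column $j$ an $\F_q$-combination of the other columns, because the map $x\mapsto x^{q^{e}}$ is $\F_q$-linear. These linear forms are pairwise non-associate irreducibles, so their product divides. The generic quotient is therefore a polynomial in $\F_q[x]$, and specializing gives $N=M\cdot Q$ with $Q\in\F_q[Z]$, provided $M\ne0$.

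\textbf{Basis independence.} A change of basis by $G\in\GL_r(q)$ multiplies both $N$ and $M$ by $\det G$, because $G$ is fixed by Frobenius. Hence the quotient does not depend on the chosen basis.

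\textbf{Degrees.} I pass to the reduced echelon basis. The argument from Lemma~\ref{lem:fixed-pattern-mrd} then applies verbatim: both $(d_i)$ and $(q^{e_i})$ are strictly increasing, so strict rearrangement singles out the identity permutation as the unique top term. This gives
\[
\deg N=\sum_i d_iq^{e_i},\qquad \deg M=\sum_i d_iq^{i-1}.
\]
In particular $N$ and $M$ are nonzero, with monic leading terms because the $P_i$ are monic. Subtracting, $\deg Q=\sum_i d_i w_i$. The bound $\le\Delta_T$ follows from $d_i\le k+i-1$, as shown earlier, together with $w_i\ge0$.

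\textbf{Denominator at $\alpha$.} Suppose $\alpha$ has degree at least $n$. Then $P_1(\alpha),\ldots,P_r(\alpha)$ are $\F_q$-independent, since a nonzero $\F_q$-combination of the $P_j$ is a nonzero polynomial of degree $<n$ and cannot vanish at $\alpha$. The classical Moore criterion then gives $M(\alpha)\ne0$.

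The main obstacle is the divisibility step: it needs the generic factorization of the Moore determinant and the pairwise non-associativity of the linear forms to conclude that their product divides. Once those are in place, everything else is degree bookkeeping already done in Lemma~\ref{lem:fixed-pattern-mrd}.
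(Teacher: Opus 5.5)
Your proposal is correct and matches the paper's proof essentially step for step: generic divisibility of the generalized determinant by the Moore determinant via $\F_q$-linear forms, then specialization at $P_1,\ldots,P_r$, basis independence under $\GL_r(q)$, the strict-rearrangement degree count on an echelon basis, and injectivity of evaluation at $\alpha$. The only difference is that you quote the classical factorization of the generic Moore determinant into the $(q^r-1)/(q-1)$ projectively distinct linear forms. The paper instead derives this factorization by noting that this count equals the total degree of $M$ and that $M\ne0$.
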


\begin{proof}
	We first establish the polynomial divisibility.
	Introduce independent variables $X_1,\ldots,X_r$ and write
	\[
	G_T(\mathbf X)
	=\det(X_j^{q^{e_i}})_{i,j=1}^r,
	\qquad
	M(\mathbf X)
	=\det(X_j^{q^{i-1}})_{i,j=1}^r.
	\]
	Let $\ell=\sum_{j=1}^r c_jX_j$ be a nonzero
	$\F_q$-linear form. Modulo $\ell$, the variables
	$X_1,\ldots,X_r$ satisfy a nontrivial
	$\F_q$-linear relation. Since Frobenius powers
	preserve such relations, both determinants
	vanish modulo $\ell$. Thus $\ell$ divides
	both $G_T$ and $M$.
	
	Up to nonzero scalar multiples, there are
	\[
	\frac{q^r-1}{q-1}
	=1+q+\cdots+q^{r-1}
	\]
	distinct nonzero $\F_q$-linear forms.
	They are pairwise relatively prime, and their
	number equals the total degree of $M$.
	Moreover, $M$ is nonzero, since its determinant
	expansion has a monomial arising from a unique
	permutation. Hence their product is a nonzero
	scalar multiple of $M$. As each of these
	forms also divides $G_T$, we conclude that
	$M$ divides $G_T$ in
	$\F_q[X_1,\ldots,X_r]$.
	
	Now choose an echelon basis
	$P_1,\ldots,P_r$ of $U$ with leading degrees
	$0\le d_1<\cdots<d_r<n$.
	By the strict leading-degree argument of
	Lemma~\ref{lem:fixed-pattern-mrd},
	both specialized determinants are nonzero,
	and
	\[
	\begin{aligned}
		\deg G_T(P_1(Z),\ldots,P_r(Z))
		&=\sum_{i=1}^r d_iq^{e_i},\\
		\deg M(P_1(Z),\ldots,P_r(Z))
		&=\sum_{i=1}^r d_iq^{i-1}.
	\end{aligned}
	\]
	In particular, the denominator in
	\eqref{eq:moore-specialized-quotient} is not identically
	zero. Specializing the polynomial identity
	$G_T=MR_T$, where
	$R_T\in\F_q[X_1,\ldots,X_r]$, therefore gives
	\[
	Q_{T,U}(Z)
	=R_T(P_1(Z),\ldots,P_r(Z))
	\in\F_q[Z].
	\]
	Subtracting the two determinant degrees yields
	\[
	\deg Q_{T,U}
	=\sum_{i=1}^r d_i
	\bigl(q^{e_i}-q^{i-1}\bigr)
	=\sum_{i=1}^r d_iw_i
	\le\Delta_T,
	\]
	where the inequality follows from
	$d_i\le k+i-1$ and $w_i\ge0$.
	
	The quotient is independent of the chosen
	basis of $U$. Indeed, a change of basis
	by a matrix in $\GL_r(q)$ multiplies
	both determinants by the same nonzero
	determinant, leaving their quotient unchanged.
	
	Finally, if $\alpha$ has degree at least
	$n$ over $\F_q$, evaluation at $\alpha$
	is injective on $\F_q[Z]_{<n}$.
	Thus $P_1(\alpha),\ldots,P_r(\alpha)$
	are $\F_q$-linearly independent, and their
	ordinary Moore determinant is nonzero.
	This proves the final assertion.
\end{proof}

\subsection{Counting successful parameters}

Grouping determinant degrees by subspace data already appears in
\cite[Lemma~4.5 and Theorem~4.6]{NeriEtAl2018}.
We group the specialized Moore quotients by the leading degrees
of their polynomial bases, using the exact degrees in
Lemma~\ref{lem:remove-moore-factor}.
For each sequence
\[
\boldsymbol d=(d_1,\ldots,d_r),
\qquad 0\le d_1<\cdots<d_r<n,
\]
define
\[
c(\boldsymbol d)
=q^{\sum_{i=1}^r(d_i-i+1)},
\qquad
b_T(\boldsymbol d)=\sum_{i=1}^r d_iw_i.
\]
The quantity $c(\boldsymbol d)$ counts the
$r$-dimensional subspaces whose reduced echelon
basis has leading degrees $d_1,\ldots,d_r$.
Indeed, below its leading term the $i$-th basis polynomial
has $d_i$ coefficient positions, of which the $i-1$ earlier
pivot positions are fixed at zero. Its $d_i-i+1$ remaining
coefficients are free.
By Lemma~\ref{lem:remove-moore-factor},
$b_T(\boldsymbol d)$ is the exact degree of
the determinant quotient $Q_{T,U}$ for every
subspace with this leading-degree sequence.

We introduce two degree budgets:
\begin{align}
	S_T
	&=\sum_{0\le d_1<\cdots<d_r<n}
	c(\boldsymbol d)b_T(\boldsymbol d),
	\label{eq:schubert-budget}\\
	S_T(m)
	&=m\sum_{0\le d_1<\cdots<d_r<n}
	c(\boldsymbol d)
	\left\lfloor\frac{b_T(\boldsymbol d)}m\right\rfloor.
	\label{eq:degree-m-budget}
\end{align}
The first budget sums the degrees of all
determinant quotients, one for each
$r$-dimensional subspace of $\F_q^n$.

The second budget accounts only for roots of degree exactly
$m$ over $\F_q$: a nonzero polynomial of degree $b$ over $\F_q$
has at most $m\lfloor b/m\rfloor$ such roots. Thus it gives a potentially
smaller bound on the parameters excluded by the determinant tests.

By construction,
\begin{equation}
	S_T(m)\le S_T
	\le\qbinom{n}{r}\Delta_T,
	\label{eq:budget-upper-bounds}
\end{equation}
where $\qbinom{n}{r}$ denotes the number of
$r$-dimensional subspaces of $\F_q^n$.

Finally, the number of elements of degree
exactly $m$ over $\F_q$ in $\F_{q^m}$ is
\[
D_q(m)=\sum_{d\mid m}\mu(d)q^{m/d},
\]
where $\mu$ is the number-theoretic
M\"obius function.
These are the candidate parameters for
the construction. The next theorem compares
their number with the degree budgets to
guarantee the existence of MRD realizations.

\begin{theorem}
\label{thm:pattern-proportion}
Assume $m\ge n$ and $m>L$. Of the $D_q(m)$ elements
$\alpha\in\mathbb F_{q^m}$ having degree $m$ over $\mathbb F_q$,
at least
\begin{equation}
 \max\{0,D_q(m)-S_T(m)\}
 \label{eq:good-alpha-count}
\end{equation}
generate an MRD code through
\[
 g_{T,\alpha}(X)=\prod_{i=1}^r(X-\alpha^{q^{e_i}}).
\]
Consequently, their proportion is at least
\begin{equation}
 \max\left\{0,1-\frac{S_T(m)}{D_q(m)}\right\}
 \geq
 \max\left\{0,1-\frac{S_T}{D_q(m)}\right\}.
 \label{eq:good-alpha-proportion}
\end{equation}
In particular, \emph{at least one} degree-$m$ parameter works if
\begin{equation}
 D_q(m)>S_T(m).
 \label{eq:exact-degree-existence}
\end{equation}
The simpler conditions $D_q(m)>S_T$ or
$D_q(m)>\qbinom{n}{r}\Delta_T$ are also sufficient.
In contrast, \emph{every} degree-$m$ parameter works if
\begin{equation}
 m\ge n,\qquad m>L,\qquad m>\Delta_T.
 \label{eq:all-alpha-quotient-bound}
\end{equation}
Here $\Delta_T=B_T-\sum_i(k+i-1)q^{i-1}$ improves the
all-parameter bound of Lemma~\ref{lem:fixed-pattern-mrd}, while
the counting criterion can require a much smaller extension degree.
\end{theorem}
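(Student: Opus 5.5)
The proof reduces the MRD property to nonvanishing of one polynomial per subspace, then counts bad parameters. Fix $m\ge n$, $m>L$, and $\alpha$ of degree $m$ over $\F_q$. Since $L<m$, the roots $\alpha^{q^{e_i}}$ are distinct. As in Lemma~\ref{lem:fixed-pattern-mrd}, the matrix $H=(\alpha^{jq^{e_i}})$ is then a parity-check matrix of $C_{g_{T,\alpha}}(n)$ of rank $r$. By Proposition~\ref{prop:mrd-criterion}, the code is MRD if and only if $\det(HA)\ne0$ for one basis matrix $A$ of each $r$-dimensional subspace $U\le\F_q^n$. Identify $U$ with a subspace of $\F_q[Z]_{<n}$. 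Then $\det(HA)$ is the evaluation at $\alpha$ of $\det(P_j(Z)^{q^{e_i}})$. By Lemma~\ref{lem:remove-moore-factor}, this determinant factors as $Q_{T,U}(Z)$ times the ordinary Moore determinant, and the Moore factor is nonzero at $\alpha$ because $\deg_{\F_q}\alpha=m\ge n$. Hence the code is MRD if and only if $Q_{T,U}(\alpha)\ne0$ for every $U$.

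Next I count the bad parameters. Each $Q_{T,U}$ is a nonzero polynomial in $\F_q[Z]$ of exact degree $b_T(\boldsymbol d)$, where $\boldsymbol d$ is the leading-degree sequence of $U$. Its roots of degree exactly $m$ over $\F_q$ come in full Galois orbits of size $m$, and each orbit contributes a factor of degree $m$ (the minimal polynomial). So such a polynomial has at most $m\lfloor b_T(\boldsymbol d)/m\rfloor$ roots of degree $m$. Grouping the subspaces by their leading-degree sequence, with $c(\boldsymbol d)$ subspaces in each group, shows that at most $S_T(m)$ degree-$m$ elements are bad. This gives the count \eqref{eq:good-alpha-count} and the existence criterion \eqref{eq:exact-degree-existence}.

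The remaining inequalities follow directly. Since $m\lfloor b/m\rfloor\le b$, we get $S_T(m)\le S_T$, and combined with \eqref{eq:budget-upper-bounds} this gives the weaker sufficient conditions and the second inequality in \eqref{eq:good-alpha-proportion}. For the all-parameter statement, if $m>\Delta_T\ge b_T(\boldsymbol d)$, then every floor $\lfloor b_T(\boldsymbol d)/m\rfloor$ vanishes. Hence $S_T(m)=0$ and every degree-$m$ $\alpha$ works; equivalently, a nonzero polynomial of degree less than $m$ cannot vanish at an element of degree $m$. The identity $\Delta_T=B_T-\sum_i(k+i-1)q^{i-1}$ is immediate from $w_i=q^{e_i}-q^{i-1}$.

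The main obstacle is the first step: the equivalence between MRD and nonvanishing of the quotients. It needs two facts: $H$ has full rank and defines exactly the code, which uses $L<m$ so that the roots are distinct, and the Moore factor is nonzero at $\alpha$. Both are supplied by earlier results, so the rest is bookkeeping of the counts.
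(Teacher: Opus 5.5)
Your proposal is correct and follows the paper's proof essentially step for step. You reduce the MRD test, via the parity-check matrix and Proposition~\ref{prop:mrd-criterion}, to the nonvanishing of one Moore quotient $Q_{T,U}(\alpha)$ per subspace, using Lemma~\ref{lem:remove-moore-factor}; you bound the degree-$m$ roots of each quotient by $m\lfloor b_T(\boldsymbol d)/m\rfloor$; and you take a union over the $c(\boldsymbol d)$ subspaces in each leading-degree class. The only cosmetic difference is that you obtain the all-parameter case from $S_T(m)=0$ when $m>\Delta_T$, whereas the paper argues directly that a nonzero polynomial of degree less than $m$ has no roots of degree $m$ (which you also note as equivalent).
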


\begin{proof}
	Let $\alpha\in\F_{q^m}$ have degree $m$ over
	$\F_q$. Since $m>L$, the roots of $g_{T,\alpha}$
	are distinct, and evaluation at these roots
	gives a rank-$r$ parity-check matrix $H$, as in
	Lemma~\ref{lem:fixed-pattern-mrd}.
	
	For any rank-$r$ matrix $A\in\F_q^{n\times r}$,
	let $P_1,\ldots,P_r$ be the independent
	polynomials corresponding to its columns, and put
	$U=\langle P_1,\ldots,P_r\rangle_{\F_q}$.
	Then
	\[
	\det(HA)
	=\det(P_j(\alpha)^{q^{e_i}})_{i,j=1}^r.
	\]
	Since $m\ge n$, Lemma~\ref{lem:remove-moore-factor}
	ensures that the ordinary Moore determinant
	$\det(P_j(\alpha)^{q^{i-1}})_{i,j=1}^r$
	is nonzero. Consequently,
	\[
	\det(HA)=0
	\quad\Longleftrightarrow\quad
	Q_{T,U}(\alpha)=0.
	\]
	As $Q_{T,U}$ is independent of the chosen basis
	of $U$, the MRD criterion reduces to testing
	one quotient for each $r$-dimensional subspace
	$U\le\F_q[Z]_{<n}$.
	
	We now count the parameters excluded by these
	tests. For each leading-degree sequence
	$\boldsymbol d$, there are $c(\boldsymbol d)$
	subspaces with that sequence, and the
	corresponding quotients have degree
	$b_T(\boldsymbol d)$. Hence
	\[
	S_T
	=\sum_{U\in\Gr(r,\F_q^n)}\deg Q_{T,U}
	\le\qbinom{n}{r}\Delta_T.
	\]
	
	More precisely, a nonzero polynomial
	$Q\in\F_q[Z]$ of degree $b$ has at most
	$\lfloor b/m\rfloor$ distinct irreducible
	factors of degree $m$. Each such factor has
	exactly $m$ roots in $\F_{q^m}$, so $Q$
	vanishes at no more than
	$m\lfloor b/m\rfloor$ elements of degree $m$.
	Applying this estimate to all the quotients
	and taking their union, we find that at most
	$S_T(m)$ degree-$m$ parameters fail the MRD
	test. Possible overlaps among their root sets
	only improve this bound.
	
	Since there are $D_q(m)$ elements of degree
	$m$ over $\F_q$, at least
	\[
	\max\{0,D_q(m)-S_T(m)\}
	\]
	of them yield MRD codes. This proves
	\eqref{eq:good-alpha-count}--\eqref{eq:exact-degree-existence},
	and \eqref{eq:budget-upper-bounds}
	gives the simpler sufficient conditions.
	
	Finally, if $m>\Delta_T$, every quotient
	$Q_{T,U}$ is a nonzero polynomial of degree
	less than $m$. It therefore cannot vanish at
	any element of degree $m$. Thus every
	degree-$m$ parameter yields an MRD code,
	proving \eqref{eq:all-alpha-quotient-bound}.
\end{proof}

\begin{remark}
	Theorem~\ref{thm:pattern-proportion} gives lower bounds
	on the number and proportion of successful degree-$m$
	parameters, rather than counting distinct codes or
	rank-equivalence classes. Its tests concern the chosen short power-basis
	space; no Moore property on arbitrary independent tuples
	in the extension field is assumed.
\end{remark}

\subsection{Uniform extension bounds and density}

\begin{corollary}
\label{cor:polynomial-profile-bound}
Let $r,k\ge2$, $n=r+k$, and prescribe any admissible loss sequence
\[
 r\ge\delta_1\ge\cdots\ge\delta_h\ge1,
 \qquad \sum_{j=1}^h\delta_j=k.
\]
Choose an exponent set $T$ by the root-run construction of
Theorem~\ref{thm:universal-profile-realization}, so that $e_1=0$
and its largest exponent is $L=n-h-1$.
For this \emph{particular} profile and exponent set, there is an MRD
realization over every $\mathbb F_{q^m}$ satisfying
\begin{equation}
 m\ge\max\left\{n,
       \left\lceil\log_q(2S_T)\right\rceil\right\}.
 \label{eq:logarithmic-profile-bound}
\end{equation}
When $S_T=0$, the condition is understood simply as $m\ge n$.

We also give bounds that do not depend on the profile or the
choice of $T$. Set
\[
 h_0=\left\lceil\frac{k}{r}\right\rceil,\qquad
 H_{q,n,r}=\sum_{i=2}^r(k+i-1)q^{i-1},\qquad
 A_{q,n,r}=(q^{k-h_0}-1)H_{q,n,r}.
\]
Here $h_0$ is a lower bound for the number $h$ of positive losses.
Every admissible profile is realized over \emph{each} extension with
\begin{equation}
 m\ge\max\left\{n,
 \left\lceil\log_q\left(2\qbinom{n}{r} A_{q,n,r}\right)\right\rceil
 \right\}.
 \label{eq:uniform-logarithmic-bound}
\end{equation}
In particular, the simpler condition
\begin{equation}
 m\ge\max\left\{n,
 rk+n-h_0+
 \left\lceil\log_q\left(\frac{8(n-1)}{q-1}\right)\right\rceil
 \right\}
 \label{eq:polynomial-extension-bound}
\end{equation}
is sufficient. For fixed $q$, this is an extension degree of
order $O(rk+n+\log n)$, in contrast to the exponential
all-parameter bound of Section~\ref{sec:profile-realization}.
\end{corollary}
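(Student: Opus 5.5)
The plan is to deduce everything from the counting criterion \eqref{eq:exact-degree-existence} of Theorem~\ref{thm:pattern-proportion}, in the weaker form $D_q(m)>S_T$. Two facts are needed. First, a lower bound for $D_q(m)$: the elements of $\mathbb F_{q^m}$ of degree less than $m$ lie in the proper subfields $\mathbb F_{q^{m/p}}$, so
\[
D_q(m)\ge q^m-\sum_{d\le m/2}q^d\ge q^m-2q^{m/2}\ge q^m/2
\]
for $m\ge2$. Hence $q^m\ge 2S_T$ gives $D_q(m)\ge q^m/2\ge S_T$. If $S_T>0$, this must be sharpened to a strict inequality. I would use $D_q(m)>q^m/2$, which holds because $2q^{m/2}<q^m/2$ once $q^{m/2}>4$. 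The small cases $q^m\le16$ can be handled directly, or absorbed, since $m\ge n\ge4$ already forces $q^m\ge16$. If $S_T=0$, every quotient is constant and every seed works. The hypotheses $m\ge n$ and $m>L=n-h-1$ hold automatically. The profile itself comes from the gap computation in Theorem~\ref{thm:universal-profile-realization}, which needs only $m\ge n$, so it is independent of which degree-$m$ seed is chosen. This proves \eqref{eq:logarithmic-profile-bound}.

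For the uniform bound \eqref{eq:uniform-logarithmic-bound}, I would apply \eqref{eq:budget-upper-bounds}, namely $S_T\le\qbinom{n}{r}\Delta_T$, and then bound $\Delta_T$ independently of the profile. In the root-run construction the first run is $\{0,\dots,r-\delta_1\}$, so $e_i=i-1$ and $w_i=0$ for $i\le r-\delta_1+1$. In all cases $w_1=0$. Since $e_r=L=n-h-1\le n-h_0-1$ and the $e_i$ increase, we have $e_i\le L-(r-i)$. This gives
\[
w_i\le q^{i-1}\bigl(q^{L-r+1}-1\bigr)\le q^{i-1}\bigl(q^{k-h_0}-1\bigr).
\]
Summing over $2\le i\le r$ with the weights $k+i-1$ yields $\Delta_T\le (q^{k-h_0}-1)H_{q,n,r}=A_{q,n,r}$. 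Every admissible profile therefore works once $q^m\ge 2\qbinom{n}{r}A_{q,n,r}$.

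For the simplified bound \eqref{eq:polynomial-extension-bound}, I would estimate the two factors separately:
\begin{itemize}
\item $\qbinom{n}{r}\le c_q\,q^{rk}$ with $c_q=\prod_{i\ge1}(1-q^{-i})^{-1}\le4$, using the standard product bound;
\item $H_{q,n,r}\le(n-1)\sum_{i=1}^{r-1}q^{i}\le (n-1)\,q^{r}/(q-1)$.
\end{itemize}
Together with $q^{k-h_0}-1<q^{k-h_0}$, these give
\[
2\qbinom{n}{r}A_{q,n,r}< 8\,q^{rk}\,q^{k-h_0}\,q^{r}\,\frac{n-1}{q-1}=q^{rk+n-h_0}\,\frac{8(n-1)}{q-1},
\]
so taking $\log_q$ produces exactly the stated expression. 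The main obstacle is getting the constant $4$ in the Gaussian binomial bound cleanly. If the infinite product is awkward to cite, I would instead prove $\qbinom{n}{r}\le q^{rk}\prod_{i=1}^{r}(1-q^{-i})^{-1}$ directly and bound the product by $4$ for $q\ge2$, using $\prod_{i\ge1}(1-2^{-i})^{-1}\approx3.46$.
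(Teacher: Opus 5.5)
Your proposal is correct and follows the paper's proof: the counting criterion $D_q(m)>S_T\ge S_T(m)$ with $D_q(m)>\frac12q^m$ for $m\ge n\ge4$, the chain $S_T\le\qbinom{n}{r}\Delta_T\le\qbinom{n}{r}A_{q,n,r}$ via $e_i\le k-h+i-1$, and the estimate $\qbinom{n}{r}<4q^{rk}$. The only variation is that you derive \eqref{eq:polynomial-extension-bound} by bounding $A_{q,n,r}$ itself, so it visibly implies \eqref{eq:uniform-logarithmic-bound}, whereas the paper bounds $\Delta_T<(n-1)q^{L+1}/(q-1)$ directly and then replaces $h$ by $h_0$. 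Note also that $\sum_{d\le m/2}q^d<2q^{m/2}$ holds strictly, so $q^{m/2}\ge4$ already gives $D_q(m)>\frac12q^m$ and the case $q^m=16$ needs no separate treatment.
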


\begin{proof}
	\emph{The bound for a fixed profile.}
	For the exponent set $T$ constructed in
	Theorem~\ref{thm:universal-profile-realization},
	the gap calculation gives the prescribed
	intersection profile for every $m\ge n$:
	the external gap has length $m-n+h\ge h$.
	It remains to ensure the MRD property.
	
	Since $r,k\ge2$, we have $m\ge n\ge4$.
	Every element of $\F_{q^m}$ of degree less
	than $m$ lies in a proper subfield, whose
	degree over $\F_q$ is at most
	$\lfloor m/2\rfloor$. Consequently,
	\[
	\begin{aligned}
		q^m-D_q(m)
		&\le \sum_{d=1}^{\lfloor m/2\rfloor}q^d\\
		&<\frac{q^{\lfloor m/2\rfloor+1}}{q-1}\\
		&\le\frac{q^{m-1}}{q-1}
		\le\frac12q^m.
	\end{aligned}
	\]
	Hence
	\begin{equation}
		D_q(m)>\frac12q^m.
		\label{eq:full-degree-half}
	\end{equation}
	
	If $S_T>0$ and
	$m\ge\lceil\log_q(2S_T)\rceil$, then
	\[
	D_q(m)>\frac12q^m\ge S_T\ge S_T(m).
	\]
	Theorem~\ref{thm:pattern-proportion}
	therefore guarantees a degree-$m$ parameter
	yielding an MRD code. If $S_T=0$, all
	determinant quotients are nonzero constants,
	so every degree-$m$ parameter works.
	This proves
	\eqref{eq:logarithmic-profile-bound}.
	
	\emph{A bound uniform in the profile.}
	Since the positive losses are at most $r$
	and sum to $k$,
	\[
	k=\sum_{j=1}^h\delta_j\le rh,
	\qquad
	h\ge h_0=\left\lceil\frac{k}{r}\right\rceil.
	\]
	The exponent set
	$T=\{e_1<\cdots<e_r\}$ ends at
	$L=n-h-1$. Since there are $r-i$
	distinct exponents greater than $e_i$,
	\[
	e_i\le L-(r-i)=k-h+i-1.
	\]
	Thus $w_1=0$, while for $i\ge2$,
	\[
	w_i=q^{e_i}-q^{i-1}
	\le(q^{k-h}-1)q^{i-1}.
	\]
	It follows that
	\[
	\begin{aligned}
		\Delta_T
		&=\sum_{i=1}^r(k+i-1)w_i\\
		&\le(q^{k-h}-1)H_{q,n,r}\\
		&\le(q^{k-h_0}-1)H_{q,n,r}
		=A_{q,n,r}.
	\end{aligned}
	\]
	Consequently,
	\[
	S_T
	\le\qbinom{n}{r}\Delta_T
	\le\qbinom{n}{r}A_{q,n,r}.
	\]
	Substituting this profile-independent
	estimate into the fixed-profile bound proves
	\eqref{eq:uniform-logarithmic-bound}.
	
	\emph{The closed-form estimate.}
	Since $w_i\le q^{e_i}$,
	$k+i-1\le n-1$, and the exponents are
	distinct elements of $\{0,\ldots,L\}$,
	\[
	\begin{aligned}
		\Delta_T
		&=\sum_{i=1}^r(k+i-1)w_i\\
		&<(n-1)\sum_{j=0}^{L}q^j\\
		&<(n-1)\frac{q^{L+1}}{q-1}.
	\end{aligned}
	\]
	Using the Gaussian binomial estimate
	$\qbinom{n}{r}<4q^{rk}$, justified below,
	and $L+1=n-h$, we obtain
	\[
	\begin{aligned}
		2S_T
		&\le2\qbinom{n}{r}\Delta_T\\
		&<\frac{8(n-1)}{q-1}q^{rk+n-h}\\
		&\le\frac{8(n-1)}{q-1}q^{rk+n-h_0}.
	\end{aligned}
	\]
	Condition~\eqref{eq:polynomial-extension-bound}
	ensures that $q^m$ is at least the final
	upper bound. Since the preceding inequality
	is strict, we have $2S_T<q^m$, and hence
	\[
	D_q(m)>\frac12q^m>S_T\ge S_T(m).
	\]
	Theorem~\ref{thm:pattern-proportion}
	therefore gives an MRD realization.
	
	Finally, the Gaussian estimate follows
	from its product formula. For $q\ge2$,
	\[
	\begin{aligned}
		\prod_{j\ge1}(1-q^{-j})
		&\ge\prod_{j\ge1}(1-2^{-j})\\
		&\ge\frac12
		\left(1-\sum_{j\ge2}2^{-j}\right)
		=\frac14.
	\end{aligned}
	\]
	Since the numerator in the Gaussian
	binomial product formula is strictly
	less than $1$, this gives
	$\qbinom{n}{r}<4q^{rk}$, completing
	the proof.
\end{proof}

\begin{corollary}
	\label{cor:profile-parameter-density}
	Under the hypotheses of
	Corollary~\ref{cor:polynomial-profile-bound},
	let $\rho_{T,m}$ denote the proportion of elements
	$\alpha\in\mathbb F_{q^m}$ of degree $m$ over
	$\mathbb F_q$ for which $C_{g_{T,\alpha}}(n)$
	is MRD and has the prescribed profile.
	Then, for every $m\ge n$,
	\begin{equation}
		\begin{aligned}
			\rho_{T,m}
			&\ge
			\max\left\{0,1-\frac{S_T(m)}{D_q(m)}\right\}\\
			&\ge
			\max\left\{0,1-
			\frac{8(n-1)}{q-1}q^{rk+n-h-m}\right\}.
		\end{aligned}
		\label{eq:profile-density-explicit}
	\end{equation}
	
	In particular, for any $0<\varepsilon<1$,
	the condition
	\[
	m\ge
	\max\left\{
	n,\,
	rk+n-h+
	\left\lceil
	\log_q\left(
	\frac{8(n-1)}{(q-1)\varepsilon}
	\right)
	\right\rceil
	\right\}
	\]
	ensures that $\rho_{T,m}\ge1-\varepsilon$.
	Replacing $h$ by $h_0=\lceil k/r\rceil$
	gives a sufficient bound uniform over all
	admissible profiles. The parameter $\alpha$
	may be chosen separately for each profile.
\end{corollary}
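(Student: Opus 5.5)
The argument has three parts. First, the profile holds for every degree-$m$ parameter. Then the first density inequality comes from Theorem~\ref{thm:pattern-proportion}, and the explicit second inequality comes from the estimates already made in the proof of Corollary~\ref{cor:polynomial-profile-bound}.

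\emph{Step 1: the profile is automatic.} Fix the exponent set $T$ built in Theorem~\ref{thm:universal-profile-realization}, and let $m\ge n$. Since $L=n-h-1<n\le m$, the exponents in $T$ are distinct in $\mathbb Z/m\mathbb Z$. The external gap has length $m-n+h\ge h$. The gap computation in the proof of Theorem~\ref{thm:universal-profile-realization}, via Theorem~\ref{thm:orbit-gap-formula} and \eqref{eq:global-lcm}, then gives the prescribed profile for \emph{every} $\alpha$ of degree $m$. Therefore $\rho_{T,m}$ equals the proportion of degree-$m$ parameters $\alpha$ for which $C_{g_{T,\alpha}}(n)$ is MRD. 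Since $m\ge n>L$, the hypotheses of Theorem~\ref{thm:pattern-proportion} hold, and \eqref{eq:good-alpha-proportion} gives the first inequality in \eqref{eq:profile-density-explicit}.

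\emph{Step 2: the explicit bound.} We have $S_T(m)\le S_T\le\qbinom{n}{r}\Delta_T$. The proof of Corollary~\ref{cor:polynomial-profile-bound} showed
\[
\Delta_T<\frac{(n-1)q^{L+1}}{q-1}
\qquad\text{and}\qquad
\qbinom{n}{r}<4q^{rk},
\]
with $L+1=n-h$. Together these give $S_T<\frac{4(n-1)}{q-1}q^{rk+n-h}$. Since $r,k\ge2$ we have $m\ge4$, and \eqref{eq:full-degree-half} gives $D_q(m)>\tfrac12q^m$. Dividing,
\[
\frac{S_T(m)}{D_q(m)}<\frac{8(n-1)}{q-1}\,q^{rk+n-h-m},
\]
which is the second inequality. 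Because of the $\max\{0,\cdot\}$, the case where this ratio exceeds $1$ needs no separate treatment.

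\emph{Step 3: the $\varepsilon$-statement and uniformity.} Under the stated lower bound on $m$,
\[
q^{m-(rk+n-h)}\ge\frac{8(n-1)}{(q-1)\varepsilon},
\]
so the explicit term is at most $\varepsilon$ and $\rho_{T,m}\ge1-\varepsilon$. Since $h\ge h_0$, replacing $h$ by $h_0$ only enlarges the required $m$, which gives a bound uniform over all admissible profiles. The parameter $\alpha$ depends on $T$, so it may be chosen separately for each profile.

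There is no serious obstacle. The only point needing care is Step 1: the proportion counts both the MRD property and the profile, and we must confirm that the profile is guaranteed for all $m\ge n$, not merely for $m>B_T$. This holds because the gap argument only used $m\ge n$.
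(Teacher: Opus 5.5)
Your proposal is correct and follows essentially the same route as the paper: the first inequality comes from Theorem~\ref{thm:pattern-proportion}, and the second from $S_T(m)\le S_T$, the estimate $2S_T<\frac{8(n-1)}{q-1}q^{rk+n-h}$ taken from the proof of Corollary~\ref{cor:polynomial-profile-bound}, and $D_q(m)>\frac12q^m$. Your Step~1 checks that the gap argument gives the prescribed profile for every degree-$m$ parameter once $m\ge n$, so that $\rho_{T,m}$ is just the MRD proportion; this makes explicit a point the paper's proof uses only implicitly.
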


\begin{proof}
	Theorem~\ref{thm:pattern-proportion} gives the
	first inequality in \eqref{eq:profile-density-explicit}.
	For $m\ge n\ge4$, the estimate
	$D_q(m)>\frac12q^m$ from
	\eqref{eq:full-degree-half}, together with
	$S_T(m)\le S_T$ and Corollary~
	\ref{cor:polynomial-profile-bound}, yields
	\[
	\frac{S_T(m)}{D_q(m)}
	\le\frac{S_T}{D_q(m)}
	<
	\frac{8(n-1)}{q-1}q^{rk+n-h-m}.
	\]
	This proves the second inequality in
	\eqref{eq:profile-density-explicit}.
	
	Under the stated condition on $m$, the
	last expression is at most $\varepsilon$.
	Consequently,
	\[
	\rho_{T,m}
	\ge 1-\frac{S_T(m)}{D_q(m)}
	\ge 1-\varepsilon.
	\]
	Finally, since $h\ge h_0$, replacing $h$
	by $h_0$ gives a sufficient condition
	uniform over all admissible profiles.
\end{proof}

\begin{example}
	\label{ex:binary-profile-budget}
	For $T=\{0,2,5\}$, $n=9$, $r=3$, $k=6$, and $q=2$
	from Example~\ref{ex:profile-construction}, the degree
	bounds and budgets are
	\begin{center}
	\begin{tabular}{@{}lr@{}}
	\hline
	Quantity & Value\\
	\hline
	$B_T=6+7\cdot4+8\cdot32$ & $290$\\
	$\Delta_T$ & $238$\\
	$\genfrac{[}{]}{0pt}{}{9}{3}_{2}$ & $788035$\\
	$S_T$ & $183724894$\\
	$D_2(28)$ & $268419060$\\
	$S_T(28)$ & $173424944$\\
	\hline
	\end{tabular}
	\end{center}
	The budgets $S_T$ and $S_T(28)$ follow by summing
	\eqref{eq:schubert-budget}--\eqref{eq:degree-m-budget}
	over leading-degree triples. Lemma~\ref{lem:fixed-pattern-mrd}
	guarantees every degree-$m$ parameter for $m\ge291$;
	\eqref{eq:all-alpha-quotient-bound} improves this to $m\ge239$.
	At $m=28$, Theorem~\ref{thm:pattern-proportion} guarantees
	$94994116$ successful parameters, giving
	\[
	\rho_{T,28}\ge\frac{94994116}{268419060}>0.35.
	\]
	This is an existence and positive-density bound over
	$\mathbb F_{2^{28}}$, not a claim that every degree-$28$
	parameter works.
\end{example}

\begin{corollary}
	\label{cor:boundary-threshold}
	If $\min\{r,k\}=1$, then
	\[
	M_{\mathrm{univ}}(q,n,r)=n.
	\]
\end{corollary}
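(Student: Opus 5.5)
Since $M_{\mathrm{univ}}(q,n,r)\ge n$ holds by definition, it suffices to show the reverse inequality. That is, for every $m\ge n$ and every admissible loss sequence we must produce an MRD code $C_g(n)$ over $\F_{q^m}$ with that profile. When $\min\{r,k\}=1$ there is exactly one admissible profile. Indeed, if $r=1$ then \eqref{eq:admissible-losses} forces $\delta_1=\cdots=\delta_k=1$ with $h=k$. If $k=1$ then $h=1$ and $\delta_1=1$. So in each case I will take the exponent set $T$ of Theorem~\ref{thm:universal-profile-realization}, fix any $\alpha\in\F_{q^m}$ of degree $m$ over $\F_q$ (for instance a primitive element, so $D_q(m)>0$), and check that the extension-degree requirements collapse to $m\ge n$.

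\emph{Case $r=1$.} Here $T=\{0\}$ and $g_T=X-\alpha$. The degree bound of Lemma~\ref{lem:fixed-pattern-mrd} is $B_T=(k+0)q^0=k=n-1$. Hence every $m\ge n$ satisfies $m>B_T$, and the lemma gives the MRD property. Concretely, $\det(HA)=P(\alpha)$ for a nonzero $P\in\F_q[Z]_{<n}$, which cannot vanish because $\deg P<n\le m$. Theorem~\ref{thm:universal-profile-realization} then applies with the hypothesis $m>B_T$ and yields the profile $d_j=k-j$. Equivalently, by Theorem~\ref{thm:orbit-gap-formula} the single external gap has length $m-1\ge n-1=h$, so $D_j=1+j$ for $j\le h$.

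\emph{Case $k=1$.} The construction gives $L=e_r=n-h-1=r-1$, so $T=\{0,1,\ldots,r-1\}$ is consecutive. Then $w_i=q^{e_i}-q^{i-1}=0$ for all $i$, hence $\Delta_T=0$. The all-parameter criterion \eqref{eq:all-alpha-quotient-bound} of Theorem~\ref{thm:pattern-proportion} requires only $m\ge n$, $m>L=r-1$ and $m>0$, all implied by $m\ge n$. Therefore every degree-$m$ parameter yields an MRD code; in fact $g_T$ is a Gabidulin-type generator, and all Moore quotients are constants. For the profile, the one occupied run has a single external gap of length $m-r\ge1=h$. So $D_1=r+1=n$, giving $d_1=0$ as required.

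The only point needing care is the second case. There Lemma~\ref{lem:fixed-pattern-mrd} alone is too weak, since $B_T$ is exponential in $r$, so one must pass through the Moore-quotient refinement of Lemma~\ref{lem:remove-moore-factor}, which makes $\Delta_T=0$. One must also check that the profile computation in Theorem~\ref{thm:universal-profile-realization} needs only that the external gap length $m-n+h$ be at least $h$, which holds for all $m\ge n$, rather than the stronger hypothesis $m>B_T$. Combining both cases gives $M_{\mathrm{univ}}(q,n,r)\le n$, hence equality.
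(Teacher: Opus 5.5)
Your proof is correct and follows essentially the same route as the paper: in both cases you take the unique all-ones profile and $T=\{0,1,\ldots,r-1\}$, use $w_i=0$ (constant Moore quotients) so every degree-$m$ seed gives an MRD code for $m\ge n$, and check that the gap count needs only $m\ge n$. The only difference is your separate appeal to $B_T=n-1$ when $r=1$; this is harmless but unnecessary, since $T=\{0\}$ also has $w_1=0$, which lets the paper treat both boundary cases at once.
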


\begin{proof}
	In either boundary case, the unique admissible loss sequence
	consists of $k$ ones, giving $T=\{0,1,\ldots,r-1\}$.
	Thus $e_i=i-1$ and $w_i=0$ for every $i$: the generalized
	and ordinary Moore determinants coincide, so $Q_{T,U}=1$.
	Theorem~\ref{thm:pattern-proportion} gives an MRD realization
	with the prescribed profile for every degree-$m$ parameter
	when $m\ge n$. Since $M_{\mathrm{univ}}$ is defined to be
	at least $n$, equality follows.
\end{proof}

The next section establishes lower bounds
for the every-extension threshold.
In particular,
Theorem~\ref{thm:profile-uniform-bound-near-sharp}
shows that, along $r=s$ and $n=s^2+2$,
the upper and lower bounds differ by only
$O_q(\log n)$.

\section{A near-sharp obstruction for every-extension bounds}
\label{sec:profile-lower-bounds}

To bound the every-extension threshold from below, we
must find an extension degree at which some admissible
profile has no polynomial-multiple MRD realization.
Excluding only a particular exponent set or parameter
would not suffice. The following lemma determines all
MRD generators with a specially chosen profile, without
assuming that they are squarefree or split over $E$.

\begin{lemma}
	\label{lem:ap-profile-all-points}
	Let $r,k\ge2$, put $n=r+k$, and choose an integer
	$e\ge2$ such that $n\ge re+2$. Define
	\[
	a=k-r(e-1)\ge2,
	\qquad
	h=e-1+a=k-(r-1)(e-1).
	\]
	Let $E=\mathbb F_{q^m}$ with $m\ge n$, and suppose
	that a monic polynomial $g\in E[X]$ of degree $r$
	generates an MRD code $C_g(n)$ whose positive
	$q$-Frobenius losses are
	\begin{equation}
		(\delta_1,\ldots,\delta_h)
		=
		\bigl(
		\underbrace{r,\ldots,r}_{e-1\text{ entries}},
		\underbrace{1,\ldots,1}_{a\text{ entries}}
		\bigr).
		\label{eq:ap-profile-obstructed-losses}
	\end{equation}
	Then there exists a unique $\beta\in E$ such that
	\begin{equation}
		g(X)=\prod_{i=0}^{r-1}
		(X-\beta^{q^{ie}}).
		\label{eq:ap-profile-seed-representation}
	\end{equation}
	Moreover, its degree
	$t=[\mathbb F_q(\beta):\mathbb F_q]$
	satisfies $t\mid m$ and $t\ge n$.
	In particular, every MRD generator with this
	profile splits over $E$ and is squarefree.
	
	Conversely, if $\beta\in E$ has degree $t\ge n$,
	then the polynomial in
	\eqref{eq:ap-profile-seed-representation}
	has the prescribed intersection profile.
	This converse concerns the profile only;
	it does not assert the MRD property.
\end{lemma}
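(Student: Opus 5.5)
The plan is to read the factor-orbit data of $g$ off the profile using Theorem~\ref{thm:orbit-gap-formula}, and then show that this data is rigid enough to force \eqref{eq:ap-profile-seed-representation}. Let $h$ be the length of the prescribed loss sequence. For $0\le j<h$ we have $D_j(g)=n-d_j$, so by \eqref{eq:gap-increments}
\[
\sum_{\ell\ge j}\nu_\ell=\delta_j=r\quad(1\le j\le e-1),\qquad
\sum_{\ell\ge j}\nu_\ell=1\quad(e\le j\le h-1).
\]
Here $e\le h-1$ because $a\ge2$, so both ranges are nonempty. Taking $j=1$ gives $\sum_\ell\nu_\ell=r$, that is, $D_1=2r$ and $\gcd(g,\sigma(g))=1$.

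Since $C_g(n)$ is MRD, Corollary~\ref{cor:canonical-multichains} gives $g_{\mathrm{fix}}=1$. Its chain decomposition then satisfies
\[
\sum_\lambda\deg P_\lambda=r-\deg\gcd(g,\sigma(g))=r=\deg g=\sum_\lambda b_\lambda\deg P_\lambda .
\]
Hence every $b_\lambda=1$, so every maximal component of every superlevel set is a singleton. Because $g_{\mathrm{fix}}=1$, every nonempty superlevel set is a proper subset of its cyclic orbit and therefore has at least one gap.

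Next I compare the equations at $j=1$ and $j=e-1$. Together they force $\nu_\ell=0$ for $\ell<e-1$, so every gap has length at least $e-1$. The equations for $e\le j\le h-1$ say that the total weight of gaps of length at least $e$ is exactly $1$, and that this one gap has length at least $h-1$. A gap of weight $1$ must lie in an orbit of linear factors, that is, $e_{\mathcal O}=1$, and it must occur at a single level.

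The key counting step, and the one I expect to need the most care, is the following. Every superlevel set $S_{\mathcal O,u}$ has at least one gap. If it had all of its gaps of length exactly $e-1$ and singleton components, it would be a full arithmetic progression filling its orbit, with no long gap. So every superlevel set must contain a gap of length at least $e$. I need to verify this carefully, since a set with singleton components and all gaps of length $e-1$ would cover the whole cycle of length divisible by $e$, and that cannot be excluded merely from properness. I will argue it directly: such a set would have a constant, $\sigma^e$-invariant pattern, so the product of $\gcd$'s along the progression is a $\sigma^e$-orbit. Alternatively, and more simply, such a set would give a superlevel set whose gaps contribute weight to $\nu_{e-1}$ only; then summing $D_j$ up to $j=h$ stays below $t$, and I will try to derive a contradiction from $D_h\ge n$ combined with Corollary~\ref{cor:no-base-factor-before-vanishing}. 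Granting this step, there is exactly one superlevel set in total. So $g$ is squarefree and split, and its roots form one Frobenius orbit $\{\gamma^{q^i}\}$ of length $t=[\mathbb F_q(\gamma):\mathbb F_q]$.

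The occupied positions are singletons separated by $r-1$ gaps of length $e-1$ and one gap of length at least $h-1\ge e$. They are therefore $s,s+e,\ldots,s+(r-1)e$, and setting $\beta=\gamma^{q^s}$ gives \eqref{eq:ap-profile-seed-representation}. Uniqueness of $\beta$ follows because $\beta$ is the only root that is immediately preceded by the long gap; this uses $h-1>e-1$. Since $\beta\in E$, we get $t\mid m$. Finally, $L_h(g)$ divides $\prod_{i<t}(X-\beta^{q^i})$, so $t\ge D_h\ge n$.

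For the converse, let $\beta$ have degree $t\ge n$. Theorem~\ref{thm:orbit-gap-formula} applies with gaps of length $e-1$ occurring $r-1$ times and one external gap of length $t-r-(r-1)(e-1)\ge h$. This gives
\[
D_j=r+(r-1)\min\{j,e-1\}+\min\{j,h\},
\]
and so $D_j=n-d_j$ for $j\le h$, with $D_h=r+(r-1)(e-1)+h=n$. Formula \eqref{eq:global-lcm} then yields the losses \eqref{eq:ap-profile-obstructed-losses}.
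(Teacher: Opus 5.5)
Your proposal has a genuine gap at the step you flag and then grant. You need to exclude a superlevel set whose components are singletons and whose gaps all have length $e-1$, that is, a full step-$e$ coset filling its orbit. This is the crux of the lemma, because the profile alone does not exclude it. For example, take $r=3$, $e=2$, $k=5$ (so $n=8$, $a=2$, $h=3$) and $g=(X-\beta)(X-\gamma)(X-\gamma^{q^2})$ with $[\mathbb F_q(\gamma):\mathbb F_q]=4$ and $[\mathbb F_q(\beta):\mathbb F_q]\ge8$. Then $\nu_1=2$ and there is one further gap of length at least $7$. Hence $(D_0,D_1,D_2,D_3)=(3,6,7,8)$ and the losses are $(3,1,1)$, exactly as for $T=\{0,2,4\}$, yet $g$ is not of the form \eqref{eq:ap-profile-seed-representation}. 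Only the MRD hypothesis rules it out: the $\mathbb F_q$-polynomial $\prod_{i=0}^{3}(X-\gamma^{q^i})$ divides $L_1(g)$ while $D_1<n$. Neither of your sketches supplies this. The $\sigma^e$-invariance of the coset product only places its coefficients in $\mathbb F_{q^{\gcd(e,m)}}$, which does not by itself give a codeword of rank weight at most $r$. And $D_h\ge n$ is the wrong inequality to combine with Corollary~\ref{cor:no-base-factor-before-vanishing}, which requires $D_j<n$.

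The repair is the paper's Step~1, which it carries out before any counting. Since $D_{h-1}=n-1<n$, Corollary~\ref{cor:no-base-factor-before-vanishing} shows that $L_{h-1}(g)$ has no nonconstant factor in $\mathbb F_q[X]$. Suppose a nonempty $S_{\mathcal O,u}$ had all gaps of length at most $h-1$, in particular all of length $e-1$. Then $S_{\mathcal O,1}+\{0,\dots,h-1\}$ would be the whole orbit, and the $\sigma$-fixed product $\prod_iP_i$ would divide $L_{h-1}(g)$, a contradiction. Hence every nonempty superlevel set has a gap of length at least $h$, and $\sum_{\ell\ge h-1}\nu_\ell=1$ then leaves a single superlevel set, with $e_{\mathcal O}=1$. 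Alternatively, for a full coset $S$, Proposition~\ref{prop:orbit-completion-obstruction} gives $e_{\mathcal O}(e-1)|S|\ge k$, which forces $e_{\mathcal O}|S|\ge k/(e-1)>r$. With this step inserted, the rest of your argument works, and parts of it differ legitimately from the paper. Your chain-decomposition derivation of singleton components replaces the paper's gap inversion $\nu_\ell=\delta_\ell-\delta_{\ell+1}$ and its spacing count. Your bound $t\ge D_h\ge n$ replaces the paper's $t=(r-1)e+1+\ell_{\mathrm{ext}}\ge n$. Your uniqueness argument, $t\mid m$, and the converse computation are correct.
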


\begin{proof}
	Write $\sigma(x)=x^q$, acting coefficientwise on
	polynomials, and put
	\[
	L_j=\operatorname{lcm}
	(g,\sigma(g),\ldots,\sigma^j(g)),
	\qquad D_j=\deg L_j.
	\]
	For $0\le j<h$, the prescribed intersection
	dimensions are positive. Hence
	formula~\eqref{eq:global-lcm} gives
	$D_j=n-d_j$, and therefore
	\[
	D_j-D_{j-1}=\delta_j
	\qquad(1\le j<h).
	\]
	At $j=h$, the vanishing $d_h=0$ gives only
	$D_h\ge n$; the identities above deliberately
	stop before this truncation point.
	Since $a\ge2$, the last two positive losses
	are both $1$. In particular,
	\begin{equation}
		D_{h-1}=n-1,
		\qquad
		D_{h-1}-D_{h-2}=1.
		\label{eq:ap-profile-last-preterminal-degrees}
	\end{equation}
	We use these identities to determine the
	factorization of $g$.
	
	\emph{Step 1: Excluding complete factor orbits.}
	Since $D_{h-1}=n-1<n$,
	Corollary~\ref{cor:no-base-factor-before-vanishing}
	shows that $L_{h-1}$ has no nonconstant factor
	in $\mathbb F_q[X]$.
	
	Consider an occupied $\sigma$-orbit of monic
	irreducible factors over $E$,
	\[
	\mathcal O=(P_0,\ldots,P_{t-1}),
	\]
	and write
	$e_{\mathcal O}=\deg P_i$ and
	$b_i=v_{P_i}(g)$.
	At each nonempty multiplicity level $u$,
	the corresponding superlevel set
	\[
	S_{\mathcal O,u}
	=\{i\in\mathbb Z/t\mathbb Z:b_i\ge u\}
	\]
	is enlarged in $L_j$ to
	$S_{\mathcal O,u}+\{0,\ldots,j\}$.
	
	Suppose that this enlargement covered the
	entire orbit at $j=h-1$.
	Since
	$S_{\mathcal O,u}\subseteq S_{\mathcal O,1}$,
	the first-level enlargement would also
	cover the entire orbit. Consequently,
	\[
	\prod_{i=0}^{t-1}P_i
	\]
	would divide $L_{h-1}$.
	But this product is a nonconstant polynomial
	fixed by $\sigma$, hence belongs to
	$\mathbb F_q[X]$, a contradiction.
	
	Therefore no nonempty superlevel set can
	cover the entire orbit after the first
	$h-1$ shifts. By the cyclic gap formula,
	each such set must have at least one
	complementary gap of length at least $h$.
	
	\emph{Step 2: Forcing a single linear-factor orbit.}
	By Theorem~\ref{thm:orbit-gap-formula}
	and \eqref{eq:ap-profile-last-preterminal-degrees},
	the weighted gap counts satisfy
	\[
	\sum_{\ell\ge h-1}\nu_\ell
	=D_{h-1}-D_{h-2}=1.
	\]
	On the other hand, Step 1 shows that every
	nonempty superlevel set has at least one
	gap of length at least $h$, contributing
	the positive integer weight $e_{\mathcal O}$
	to this sum.
	
	Since the total weighted contribution is $1$,
	there can be exactly one nonempty superlevel set,
	and its factor degree must be $e_{\mathcal O}=1$.
	This set has exactly one gap of length at least
	$h-1$, which in fact has length at least $h$
	by Step 1.
	
	Consequently, $g$ has exactly one occupied
	factor orbit and one nonempty multiplicity
	level. Its irreducible factors are therefore
	distinct and linear, so $g$ is squarefree
	and splits over $E$, with all $r$ roots
	belonging to a single $q$-Frobenius orbit.
	
	\emph{Step 3: Recovering the arithmetic progression.}
	The unique nonempty superlevel set now
	consists of $r$ occupied positions in
	one cyclic orbit, with a unique gap of
	length at least $h-1$.
	
	For $1\le\ell\le h-2$, the gap inversion
	formula gives
	\[
	\nu_\ell=\delta_\ell-\delta_{\ell+1}.
	\]
	Since $a\ge2$, we have $e-1\le h-2$.
	In particular, $\delta_{e-1}=r$ and
	$\delta_e=1$. The prescribed loss sequence
	therefore yields
	\[
	\nu_{e-1}=r-1,
	\qquad
	\nu_\ell=0
	\quad
	(1\le\ell\le h-2,\ \ell\ne e-1).
	\]
	
	All gaps other than the unique long gap
	have length at most $h-2$.
	Hence there are exactly $r-1$ such gaps,
	each of length $e-1$.
	
	There are exactly $r$ inter-occupied spacings
	around the orbit, allowing zero-length spacings
	between adjacent occupied positions.
	The unique long gap and the $r-1$ gaps
	of length $e-1>0$ already account for all
	these spacings. Hence no zero-length spacing
	remains.
	
	Cutting the orbit at the long gap, we find
	that every two successive occupied positions
	are separated by exactly $e-1$ missing
	positions.
	
	Let $\beta$ be the root at the first occupied
	position following the long gap.
	The remaining roots are obtained by advancing
	successively by $e$ Frobenius positions.
	Their exponent positions are therefore
	\[
	T=\{0,e,2e,\ldots,(r-1)e\},
	\]
	which proves
	\eqref{eq:ap-profile-seed-representation}.
	
	Let $t$ denote the length of this Frobenius
	orbit, and let $\ell_{\mathrm{ext}}$ be the
	length of its unique long gap.
	Since $\ell_{\mathrm{ext}}\ge h$,
	\[
	\begin{aligned}
		t
		&=(r-1)e+1+\ell_{\mathrm{ext}}\\
		&\ge(r-1)e+1+h=n.
	\end{aligned}
	\]
	Because the orbit consists of the
	$q$-Frobenius conjugates of $\beta$, we have
	\[
	t=[\mathbb F_q(\beta):\mathbb F_q].
	\]
	As $\beta\in E=\mathbb F_{q^m}$,
	it follows that $t\mid m$.
	
	\emph{Uniqueness of the seed.}
	The long gap is uniquely distinguished,
	since its length is at least $h$, whereas
	all other gaps have length $e-1<h$.
	If another seed $\beta'$ gave the same
	polynomial, then $\beta'=\beta^{q^c}$
	for some $c$, and translation by $c$
	would preserve $T$ modulo $t$.
	Such a translation must preserve the
	unique long gap and hence fix its first
	succeeding occupied position.
	Therefore $c\equiv0\pmod t$ and
	$\beta'=\beta$, proving uniqueness.
	
	\emph{Conversely,}
	let $\beta\in E$ have degree $t\ge n$
	and define $g$ by
	\eqref{eq:ap-profile-seed-representation}.
	All exponents in $T+\{0,\ldots,h\}$
	lie between $0$ and
	\[
	(r-1)e+h=n-1<t.
	\]
	Thus distinct exponents correspond to
	distinct Frobenius conjugates.
	
	For $0\le j\le h$, direct counting gives
	\[
	|T+\{0,\ldots,j\}|
	=
	\begin{cases}
		r(j+1),
		&0\le j\le e-1,\\
		(r-1)e+j+1,
		&e\le j\le h.
	\end{cases}
	\]
	These cardinalities equal $D_j(g)$.
	Consequently,
	\[
	D_j-D_{j-1}
	=
	\begin{cases}
		r,&1\le j\le e-1,\\
		1,&e\le j\le h.
	\end{cases}
	\]
	Moreover, $D_h=n$.
	Formula~\eqref{eq:global-lcm} therefore
	gives precisely the loss sequence
	\eqref{eq:ap-profile-obstructed-losses},
	and the intersections remain zero for
	$j>h$.
	This completes the proof.
\end{proof}

Crucially, Lemma~\ref{lem:ap-profile-all-points} makes no
assumption about the factorization of $g$. The obstruction
below therefore excludes every polynomial-multiple MRD
realization of the chosen profile, not just the root-pattern
construction of Section~\ref{sec:profile-realization}.

\begin{theorem}
	\label{thm:profile-uniform-bound-near-sharp}
	Let $r,k\ge2$, put $n=r+k$, and let $e$ be an
	integer satisfying
	\[
	2\le e\le r,
	\qquad n\ge re+2.
	\]
	Then
	\begin{equation}
		M_{\mathrm{univ}}(q,n,r)\ge e(n-1)+1.
		\label{eq:profile-uniform-lower-bound}
	\end{equation}
	In particular, if $r\ge2$ and $k\ge r+2$,
	then
	\[
	M_{\mathrm{univ}}(q,n,r)\ge2n-1.
	\]
	
	More precisely, for every integer $s\ge2$, take
	\[
	r=s,\qquad n=s^2+2,\qquad k=s^2-s+2.
	\]
	Then
	\begin{align}
		s^3+s+1
		&\le M_{\mathrm{univ}}(q,s^2+2,s)
		\le s^3+s+2+
		\left\lceil
		\log_q\!\left(\frac{8(s^2+1)}{q-1}\right)
		\right\rceil.
		\label{eq:profile-uniform-near-sharp-sequence}
	\end{align}
	For fixed $q$, the upper and lower bounds
	differ by only $O_q(\log n)$.
	In particular, no uniform bound of order
	$O_q(n)$ can guarantee realization of every
	admissible profile over every larger extension
	within the polynomial-multiple family.
\end{theorem}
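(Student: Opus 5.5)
The plan is to exhibit one extension degree $m\ge e(n-1)$ at which the profile \eqref{eq:ap-profile-obstructed-losses} has no MRD polynomial-multiple realization. Since $M_{\mathrm{univ}}$ requires realization for \emph{every} $m\ge M$, a single failure at $m_0$ forces $M_{\mathrm{univ}}\ge m_0+1$. I take $m_0=e(n-1)$, and note that $m_0\ge n$ because $e\ge2$. The hypotheses $2\le e\le r$ and $n\ge re+2$ give $k=n-r\ge r(e-1)+2$, so $a=k-r(e-1)\ge2$. Hence Lemma~\ref{lem:ap-profile-all-points} applies, and the profile \eqref{eq:ap-profile-obstructed-losses} is admissible: its parts are at most $r$ and sum to $r(e-1)+a=k$.

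\textbf{Step 1 (the generator is forced).} Suppose $g$ realizes this profile with $C_g(n)$ MRD over $E=\mathbb F_{q^{m_0}}$. By Lemma~\ref{lem:ap-profile-all-points},
\[
g=\prod_{i=0}^{r-1}\bigl(X-\beta^{q^{ie}}\bigr),
\]
where $\beta$ has degree $t\mid m_0$ and $t\ge n$.

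\textbf{Step 2 (a low-rank multiple of $g$).} Put $c=\gcd(e,t)$. The map $\tau=\sigma^e$ restricted to $\mathbb F_{q^t}$ generates $\operatorname{Gal}(\mathbb F_{q^t}/\mathbb F_{q^c})$, so the $\tau$-orbit of $\beta$ has length $t/c$. Define
\[
G(X)=\prod_{i=0}^{t/c-1}\bigl(X-\beta^{q^{ie}}\bigr).
\]
This $G$ is the minimal polynomial of $\beta$ over $\mathbb F_{q^c}$, so its coefficients lie in $\mathbb F_{q^c}$.

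\begin{itemize}
\item \emph{$g$ divides $G$.} The roots of $g$ are the first $r$ distinct terms of the $\tau$-orbit. These are distinct because $(r-1)e<n\le t$.
\item \emph{$\deg G<n$.} From $t\mid e(n-1)$ we get $t/c\mid (e/c)(n-1)$. Since $\gcd(t/c,e/c)=1$, this gives $t/c\mid n-1$, so $\deg G=t/c\le n-1$.
\end{itemize}

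Therefore $\coeff_n(G)$ is a nonzero codeword of $C_g(n)$. Its coefficients span an $\mathbb F_q$-subspace of $\mathbb F_{q^c}$, so
\[
\operatorname{wt}_q\bigl(\coeff_n(G)\bigr)\le c\le e\le r<r+1.
\]
This contradicts the MRD property, so the profile is not realized at $m_0$, and \eqref{eq:profile-uniform-lower-bound} follows. Taking $e=2$ yields the case $k\ge r+2$ with bound $2n-1$.

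\textbf{Step 3 (the explicit sequence).} Take $r=s$, $n=s^2+2$ and $e=s$. Then $n=re+2$, and the lower bound is $s(s^2+1)+1=s^3+s+1$. For the upper bound, apply \eqref{eq:main-upper-bound} from Theorem~\ref{thm:main-realization} (equivalently Corollary~\ref{cor:polynomial-profile-bound}) with $k=s^2-s+2$:
\begin{itemize}
\item $rk=s^3-s^2+2s$;
\item $\lceil k/r\rceil=s-1+\lceil 2/s\rceil=s$ for $s\ge2$;
\item $rk+n-\lceil k/r\rceil=s^3+s+2$;
\item $8(n-1)=8(s^2+1)$ in the logarithmic term.
\end{itemize}
This gives \eqref{eq:profile-uniform-near-sharp-sequence}. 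The two bounds differ by $1+\lceil\log_q(8(s^2+1)/(q-1))\rceil=O_q(\log n)$. Since the lower bound is of order $n^{3/2}$ in $n$, no bound of order $O_q(n)$ can hold.

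\textbf{Main obstacle.} The key step is Step 2, namely finding a low-rank codeword valid for \emph{every} admissible seed degree $t\mid m_0$, not just for $t=m_0$. The divisibility argument $t/c\mid n-1$ is exactly what makes the choice $m_0=e(n-1)$ work uniformly in $t$. I would double-check there that $G$ is a genuine multiple of $g$ of degree below $n$, which is what places $\coeff_n(G)$ in $C_g(n)$.
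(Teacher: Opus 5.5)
Your proposal is correct and follows the paper's proof essentially step for step. You fix $m_0=e(n-1)$, use Lemma~\ref{lem:ap-profile-all-points} to force $g=\prod_{i=0}^{r-1}(X-\beta^{q^{ie}})$, and take the minimal polynomial of $\beta$ over $\mathbb F_{q^{\gcd(e,t)}}$, whose degree divides $n-1$, as a nonzero codeword of rank weight at most $e\le r$; you then read off the explicit sequence from Corollary~\ref{cor:polynomial-profile-bound}, and you also make explicit the check $m_0\ge n$ required by the lemma, which the paper leaves implicit.
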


\begin{proof}
	We use the loss sequence
	\eqref{eq:ap-profile-obstructed-losses}
	from Lemma~\ref{lem:ap-profile-all-points}.
	It consists of $e-1$ entries equal to $r$,
	followed by
	\[
	a=k-r(e-1)\ge2
	\]
	entries equal to $1$.
	The losses are positive and nonincreasing,
	their largest value is $r$, and their sum is
	$r(e-1)+a=k$. Thus the sequence is admissible.
	
	Set
	\[
	m_0=e(n-1).
	\]
	We show that this profile cannot be realized
	by any polynomial-multiple MRD code over
	$E=\mathbb F_{q^{m_0}}$.
	
	Suppose, to the contrary, that such a code
	$C_g(n)$ exists. By
	Lemma~\ref{lem:ap-profile-all-points},
	its generator must have the form
	\[
	g(X)=\prod_{i=0}^{r-1}
	(X-\beta^{q^{ie}})
	\]
	for some $\beta\in E$ whose degree
	\[
	t=[\mathbb F_q(\beta):\mathbb F_q]
	\]
	satisfies $t\mid m_0$ and $t\ge n$.
	
	Put
	\[
	d=\gcd(e,t),
	\qquad
	K=\mathbb F_{q^d},
	\qquad
	\ell=\frac{t}{d}.
	\]
	Since $t\mid e(n-1)$, we have
	\[
	\frac{t}{d}\mid
	\frac{e}{d}(n-1).
	\]
	But
	\[
	\gcd\left(\frac{t}{d},
	\frac{e}{d}\right)=1,
	\]
	so
	\begin{equation}
		\ell\mid n-1,
		\qquad
		\ell\le n-1.
		\label{eq:ap-profile-proper-seed-obstruction}
	\end{equation}
	
	We now consider the minimal polynomial of
	$\beta$ over the intermediate field $K$.
	Since $d\mid t$, we have
	$K\subseteq\mathbb F_{q^t}$.
	The $q^e$-Frobenius on $\mathbb F_{q^t}$
	has order $t/d=\ell$ and fixed field
	$\mathbb F_{q^d}=K$. It therefore generates
	$\operatorname{Gal}(\mathbb F_{q^t}/K)$.
	
	Moreover, since $\beta$ has degree $t$
	over $\mathbb F_q$, its $q^e$-Frobenius
	orbit has exactly $\ell$ distinct elements.
	Consequently, its minimal polynomial over
	$K$ is
	\[
	M_\beta(X)=
	\prod_{j=0}^{\ell-1}
	(X-\beta^{q^{je}})
	\in K[X],
	\]
	with
	\[
	\deg M_\beta=\ell\le n-1.
	\]
	
	Every root of $g$ belongs to this
	$q^e$-Frobenius orbit. Since the roots
	of $g$ are distinct, we have
	\[
	g\mid M_\beta
	\qquad\text{in }E[X].
	\]
	Furthermore,
	\[
	\deg(M_\beta/g)
	=\ell-r<n-r=k.
	\]
	Thus $\operatorname{coeff}_n(M_\beta)$
	is a nonzero codeword of $C_g(n)$.
	
	The crucial point is that all coefficients
	of $M_\beta$ lie in
	$K=\mathbb F_{q^d}$, which has dimension
	$d$ over $\mathbb F_q$. Therefore,
	\[
	\operatorname{wt}_q
	\bigl(\operatorname{coeff}_n(M_\beta)\bigr)
	\le d\le e\le r.
	\]
	This contradicts the MRD minimum distance
	$r+1$.
	
	The argument applies to every possible
	seed degree $t\mid m_0$, including seeds
	lying in proper subfields of $E$.
	Hence the chosen profile cannot be realized
	by any polynomial-multiple MRD code over
	$\mathbb F_{q^{m_0}}$.
	
	By the definition of the every-extension
	threshold,
	\[
	M_{\mathrm{univ}}(q,n,r)
	>m_0=e(n-1),
	\]
	which proves
	\eqref{eq:profile-uniform-lower-bound}.
	Taking $e=2$, permitted when $r\ge2$
	and $k\ge r+2$, gives the bound $2n-1$.
	
	It remains to compare the lower and upper
	bounds along the specified parameter sequence.
	For every integer $s\ge2$, take
	\[
	e=r=s,\qquad n=s^2+2.
	\]
	The obstructed loss sequence consists of
	$s-1$ copies of $s$, followed by two ones.
	Since
	\[
	m_0=s(n-1)=s^3+s,
	\]
	the lower bound becomes
	\[
	M_{\mathrm{univ}}(q,s^2+2,s)
	\ge s^3+s+1.
	\]
	
	For the upper bound, we have
	\[
	\left\lceil\frac{k}{r}\right\rceil=s,
	\qquad
	rk+n-s=s^3+s+2,
	\qquad
	n-1=s^2+1.
	\]
	Applying
	Corollary~\ref{cor:polynomial-profile-bound}
	therefore gives
	\eqref{eq:profile-uniform-near-sharp-sequence}.
	
	The resulting upper and lower bounds
	differ by
	\[
	1+\left\lceil
	\log_q\left(\frac{8(s^2+1)}{q-1}\right)
	\right\rceil
	=O_q(\log n).
	\]
	Finally, since $n=s^2+2$, the lower
	bound grows as $n^{3/2}$ along this
	sequence. Hence no uniform
	$O_q(n)$ every-extension threshold
	is possible within the
	polynomial-multiple family.
\end{proof}

\begin{example}
	\label{ex:forbidden-extension}
	Take $r=2$, $n=6$, and $e=2$, so that
	$k=4$.
	The obstructed loss sequence is
	$(2,1,1)$, corresponding to the
	intersection profile $(4,2,1,0)$.
	
	We claim that this profile cannot be
	realized by a polynomial-multiple MRD
	code over $\mathbb F_{q^{10}}$.
	
	Indeed, Lemma~\ref{lem:ap-profile-all-points}
	would force any such generator to have
	the form
	\[
	g(X)=(X-\beta)(X-\beta^{q^2}),
	\]
	where the degree $t$ of $\beta$ over
	$\mathbb F_q$ satisfies $t\mid10$ and
	$t\ge6$. Hence $t=10$.
	
	The minimal polynomial of $\beta$
	over $\mathbb F_{q^2}$ is
	\[
	M_\beta(X)=
	\prod_{j=0}^{4}(X-\beta^{q^{2j}})
	\in\mathbb F_{q^2}[X].
	\]
	It has degree $5<6$ and is divisible
	by $g$. Therefore its coefficient
	vector is a nonzero codeword of
	$C_g(6)$ with rank weight at most $2$,
	contradicting the required MRD
	distance $3$.
	
	Thus
	\[
	M_{\mathrm{univ}}(q,6,2)\ge11.
	\]
\end{example}

This obstruction concerns realization
over every extension degree beyond a
uniform threshold. It does not rule out
realizing a prescribed profile over some
suitably chosen extension of degree
$O_q(n)$.

\section{Gabidulin recognition and non-Gabidulin realizations}
\label{sec:classification}

We now compare polynomial-multiple MRD codes with
generalized Gabidulin codes.
Let $1\le s<m$ satisfy $\gcd(s,m)=1$,
so that $\sigma_s(x)=x^{q^s}$ generates
$\operatorname{Gal}(E/\mathbb F_q)$.
For $a=(a_0,\ldots,a_{n-1})\in E^n$ with
$\mathbb F_q$-linearly independent coordinates,
the generalized Gabidulin code of dimension $k$ is
\begin{equation}
	\operatorname{Gab}_{k,s}(a)
	=
	\langle a,\sigma_s(a),\ldots,
	\sigma_s^{k-1}(a)\rangle_E.
	\label{eq:gab-definition}
\end{equation}
By the generalized Moore determinant criterion,
this is an MRD code with minimum rank distance
$n-k+1$
\cite{Gabidulin1985,KshevetskiyGabidulin2005,HTM}.

The class of generalized Gabidulin codes with a
fixed step $s$ is preserved under the semilinear
rank equivalences of
Section~\ref{sec:rank-equivalence}.
Indeed, for any field automorphism $\tau$ of $E$
and any $B\in\GL_n(q)$,
\[
\tau(\operatorname{Gab}_{k,s}(a))B
=
\operatorname{Gab}_{k,s}(\tau(a)B),
\]
since $\tau$ commutes with $\sigma_s$ and
$\sigma_s(B)=B$. Multiplication of an
$E$-linear code by a nonzero scalar does not
change the code.
Consequently, to exclude equivalence to
generalized Gabidulin codes, it suffices to
exclude all steps $s$ coprime to $m$.

We recall the following intersection criterion of
Horlemann-Trautmann and Marshall
\cite[Theorem~4.8]{HTM}.
Its nontrivial implication is taken from their work;
the converse follows from the generalized Moore property.

\begin{theorem}[Horlemann-Trautmann--Marshall criterion]
	\label{thm:HTM}
	Let $C\le E^n$ be an MRD code of dimension
	$2\le k<n\le m$, and let $1\le s<m$ satisfy $\gcd(s,m)=1$. Then $C$ is a generalized
	Gabidulin code with step $s$ if and only if
	\begin{equation}
		\dim_E(C\cap\sigma_s(C))=k-1.
		\label{eq:HTM}
	\end{equation}
\end{theorem}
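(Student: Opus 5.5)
The plan is to treat the two implications separately. For the direction from Gabidulin to intersection dimension, we compute directly from the generator set in \eqref{eq:gab-definition}. For the converse, we use the Frobenius intersection machinery of Proposition~\ref{prop:necessary-profile} applied to the generator $\sigma_s$, which is legitimate because $\gcd(s,m)=1$ makes $\sigma_s$ a generator of $\operatorname{Gal}(E/\mathbb F_q)$ with fixed field $\mathbb F_q$. This reproves the nontrivial implication rather than quoting \cite{HTM}.

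\emph{Gabidulin $\Rightarrow$ \eqref{eq:HTM}.} Let $C=\operatorname{Gab}_{k,s}(a)$. Then $\sigma_s(C)=\langle\sigma_s(a),\ldots,\sigma_s^{k}(a)\rangle_E$, so $C\cap\sigma_s(C)$ contains $\langle\sigma_s(a),\ldots,\sigma_s^{k-1}(a)\rangle_E$. Since $k+1\le n$ and the coordinates of $a$ are $\mathbb F_q$-independent, the generalized Moore determinant criterion shows that $a,\sigma_s(a),\ldots,\sigma_s^{k}(a)$ are $E$-independent. Hence that span has dimension exactly $k-1$, and $C\ne\sigma_s(C)$. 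This gives $\dim_E(C\cap\sigma_s(C))=k-1$.

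\emph{\eqref{eq:HTM} $\Rightarrow$ Gabidulin.} The steps are as follows.
\begin{enumerate}[label=(\roman*)]
\item Apply Proposition~\ref{prop:necessary-profile} with $\sigma_s$. Its proof only uses that the fixed field of the chosen generator is $\mathbb F_q$. From $\delta_1=1$ and the fact that the losses are nonincreasing and positive until they vanish, we get $\delta_j=1$ for all $j$. Thus $h=k$ and $I_{k-1}(C,\sigma_s)$ is one-dimensional, spanned by some $b\ne0$.
\item Since $b\in\sigma_s^t(C)$ for $0\le t\le k-1$, put $a=\sigma_s^{-(k-1)}(b)$. Then $\sigma_s^j(a)=\sigma_s^{-(k-1-j)}(b)\in C$ for $0\le j\le k-1$.
\item Show that these $k$ vectors are $E$-independent. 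Let $V_j=\langle a,\ldots,\sigma_s^j(a)\rangle_E\subseteq C$. If $\dim V_j=\dim V_{j-1}$ for some $j\le k-1$, then $V_{j-1}$ is $\sigma_s$-invariant. The reduced-row-echelon argument from Proposition~\ref{prop:necessary-profile} then shows that $V_{j-1}$ has a basis in $\mathbb F_q^n$. This produces a rank-one codeword, contradicting minimum distance $r+1\ge2$. Therefore the dimensions of the $V_j$ increase strictly, and $C=\langle a,\ldots,\sigma_s^{k-1}(a)\rangle_E$.
\item Show that $a$ has $\mathbb F_q$-independent coordinates. Suppose instead that the coordinates span a space of dimension $w<n$. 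Choose $B\in\GL_n(q)$ so that $aB$ vanishes outside the first $w$ coordinates. Since $\sigma_s$ fixes $B$, every $\sigma_s^j(a)B$ also vanishes there, so $CB$ is supported on $w$ coordinates. The code $CB$ is still MRD of dimension $k$, hence $k\le w$. By the Singleton bound applied to the punctured length-$w$ code, its distance is at most $w-k+1<n-k+1$, a contradiction. Hence $C=\operatorname{Gab}_{k,s}(a)$.
\end{enumerate}

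The main obstacle is step (iii), which must rule out a stall in the Frobenius span before the full dimension $k$ is reached. Everything hinges on transporting the invariant-subspace/rational-basis argument from Proposition~\ref{prop:necessary-profile} to $\sigma_s$. This transport needs $\gcd(s,m)=1$, so that $\sigma_s$ has fixed field $\mathbb F_q$. I would state that generalization explicitly before using it.
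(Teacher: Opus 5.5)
Your proof is correct, and for the nontrivial implication it takes a different route from the paper. The forward direction matches the paper's argument: $a,\sigma_s(a),\ldots,\sigma_s^{k}(a)$ are $E$-independent by the generalized Moore criterion, since $k+1\le n$.

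For the converse, the paper gives no argument and simply cites \cite[Theorem~4.8]{HTM]}. You instead derive it from tools already in the paper:
\begin{enumerate}
\item Proposition~\ref{prop:necessary-profile}, applied to $\sigma_s$ with $\delta_1=1$, forces every loss to equal $1$. Hence $I_{k-1}(C,\sigma_s)\neq0$.
\item A nonzero $b$ in $I_{k-1}(C,\sigma_s)$ gives $a=\sigma_s^{-(k-1)}(b)$ with $a,\ldots,\sigma_s^{k-1}(a)\in C$.
\item The invariant-subspace (RREF) argument rules out a stall in the spans $V_j$, since a stall would put a rank-one vector in $C$. Hence $C=V_{k-1}$.
\item A $\GL_n(q)$ change of coordinates, followed by puncturing and the Singleton bound, forces the coordinates of $a$ to be $\F_q$-independent.
\end{enumerate}
Each step checks out. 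In particular, $\sigma_s(V_{j-1})\subseteq V_j=V_{j-1}$ gives $\sigma_s$-invariance, and $\sigma_s^j(a)B=\sigma_s^j(aB)$ because $B$ has entries in $\F_q$.

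What your route buys is self-containment. It makes explicit exactly where $\gcd(s,m)=1$ enters (the fixed field of $\sigma_s$ is $\F_q$) and where the MRD hypothesis enters (no rank-one codeword, and the Singleton bound). The paper's citation is shorter but opaque on these points. It also actually needs only the weaker fact that $d_1=k-1$ propagates to $d_{k-1}=1$; the full profile is not used.

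One small simplification: you do not need to state a separate generalization of Proposition~\ref{prop:necessary-profile}. The paper already formulates $I_j(C,\sigma)$ and that proposition for an arbitrary generator $\sigma$ of $\operatorname{Gal}(E/\F_q)$. The remark following Theorem~\ref{thm:universal-profile-realization} says so explicitly.
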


\begin{proof}
The implication from the intersection condition to the generalized
Gabidulin form is \cite[Theorem~4.8]{HTM}.
Conversely, if $C=\operatorname{Gab}_{k,s}(a)$, then
\[
C+\sigma_s(C)
=\langle a,\sigma_s(a),\ldots,\sigma_s^k(a)\rangle_E
\]
has dimension $k+1$ by the generalized Moore criterion, since
$k+1\le n$. Hence
$\dim_E(C\cap\sigma_s(C))=k-1$.
\end{proof}

We now apply the Gabidulin intersection criterion
to polynomial-multiple codes. The factor-orbit
decomposition in Section~\ref{sec:global-hierarchy}
allows us to characterize their Gabidulin
members directly in terms of the roots of $g$,
without assuming that $g$ is squarefree or
splits over $E$.

\begin{theorem}
	\label{thm:root-classification}
	Let $g\in E[X]$ be monic of degree $r$, where
	$1\le r<n\le m$ and $k=n-r\ge2$.
	For every integer $s$ with $1\le s<m$ and $\gcd(s,m)=1$,
	the following are equivalent:
	\begin{enumerate}
		\item $C_g(n)$ is a generalized Gabidulin code
		with step $s$.
		
		\item $C_g(n)$ is MRD and
		\[
		\deg\gcd(g,\sigma_s(g))=r-1.
		\]
		
		\item There exists $\alpha\in E$ with
		$[\mathbb F_q(\alpha):\mathbb F_q]\ge n$
		such that
		\begin{equation}
			g(X)=\prod_{i=0}^{r-1}
			(X-\sigma_s^i(\alpha)).
			\label{eq:root-chain}
		\end{equation}
	\end{enumerate}
\end{theorem}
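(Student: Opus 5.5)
The plan is to prove the cycle (1)$\Rightarrow$(2)$\Rightarrow$(3)$\Rightarrow$(1). Throughout, Lemma~\ref{lem:intersection} is used in the form
\[
\dim_E\bigl(C_g(n)\cap\sigma_s(C_g(n))\bigr)=\max\{0,\,k-r+\deg\gcd(g,\sigma_s(g))\},
\]
which follows because $\sigma_s(C_g(n))=C_{\sigma_s(g)}(n)$. Since $k\ge2$, the quantity $k-1$ is positive. Hence the condition $\dim_E(C\cap\sigma_s C)=k-1$ is equivalent to $\deg\gcd(g,\sigma_s(g))=r-1$.

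For (1)$\Rightarrow$(2), a generalized Gabidulin code is MRD, and Theorem~\ref{thm:HTM} gives the intersection dimension $k-1$, hence the gcd degree $r-1$. For (3)$\Rightarrow$(1), apply Lemma~\ref{lem:fixed-pattern-mrd} with $T=\{0,1,\ldots,r-1\}$ and Frobenius $\sigma_s$, as in the remark after Theorem~\ref{thm:universal-profile-realization}. Then $Q_{T,U}=1$, exactly as in Corollary~\ref{cor:boundary-threshold}.

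More directly, the parity-check rows of $C_g(n)$ evaluate $Z^{q^{si}}$ at $1,\alpha,\ldots,\alpha^{n-1}$. These elements are $\F_q$-independent because the degree of $\alpha$ is at least $n$. So the dual is a generalized Gabidulin code of dimension $r$ with step $s$, and the $s$-Moore determinant is nonzero on independent tuples. This makes $C_g(n)$ MRD. Since the roots $\sigma_s^i(\alpha)$ are distinct (the degree is at least $n>r$), $\gcd(g,\sigma_s(g))$ equals $\prod_{i=1}^{r-1}(X-\sigma_s^i(\alpha))$, which has degree $r-1$. Theorem~\ref{thm:HTM} then gives (1). Alternatively, the dual of a Gabidulin code is Gabidulin, but the HTM route avoids citing that.

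The main step is (2)$\Rightarrow$(3), which is where the factor-orbit machinery enters. Apply Corollary~\ref{cor:canonical-multichains} with $\sigma_s$ in place of $\sigma$; this is allowed, since Section~\ref{sec:global-hierarchy} permits any generator. The MRD property gives $g_{\mathrm{fix}}=1$, so
\[
g=\prod_\lambda P_\lambda\,\sigma_s(P_\lambda)\cdots\sigma_s^{b_\lambda-1}(P_\lambda)
\quad\text{with}\quad
\sum_\lambda\deg P_\lambda=r-(r-1)=1.
\]
Hence there is exactly one chain, and its initial factor $P$ is linear, say $P=X-\alpha$. Since the chain has total degree $r$, we get $b=r$, so $g=\prod_{i=0}^{r-1}(X-\sigma_s^i(\alpha))$. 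Each chain is shorter than the full orbit, so the $\sigma_s$-orbit of $\alpha$ has length $t>r$, and its roots are distinct. The orbit length equals $[\F_q(\alpha):\F_q]$ because $\sigma_s$ generates the Galois group.

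It remains to show $t\ge n$. Apply Proposition~\ref{prop:orbit-completion-obstruction} with $P$ the minimal polynomial of $\alpha$ over $\F_q$, which has degree $t$ and shares exactly $r$ linear factors with $g$. This gives $t-r\ge k$, that is, $t\ge n$.

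The expected obstacle is making sure the corollaries of Section~\ref{sec:global-hierarchy}, stated for a generic generator, legitimately apply with $\sigma_s$. I would check that their proofs use only the facts that $\sigma_s$ generates $\operatorname{Gal}(E/\F_q)$ and that the fixed field is $\F_q$, and say so explicitly.
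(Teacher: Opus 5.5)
Your proof is correct and is essentially the paper's. You get (1)$\Leftrightarrow$(2) from Lemma~\ref{lem:intersection} and Theorem~\ref{thm:HTM}. You get (2)$\Rightarrow$(3) from Corollary~\ref{cor:canonical-multichains} applied to $\sigma_s$; your appeal to Proposition~\ref{prop:orbit-completion-obstruction} with $P=M_\alpha$ is just a packaged form of the paper's inline argument that $\coeff_n(M_\alpha)$ would be a rank-one codeword if $t<n$. You get (3)$\Rightarrow$(2)$\Rightarrow$(1) from the nonvanishing of $s$-Moore determinants on the independent elements $P_j(\alpha)$.

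Do discard your first suggested route for (3)$\Rightarrow$(1). For $s\ge2$ and $r\ge2$, the quotient of $\det(P_j^{q^{s(i-1)}})$ by the ordinary Moore determinant is a nonconstant polynomial of degree $\sum_i d_i(q^{s(i-1)}-q^{i-1})$, not $1$. Moreover, Lemma~\ref{lem:fixed-pattern-mrd} requires $\alpha$ of degree exactly $m$ with $m>B_{T,s}$, which hypothesis (3) does not supply. Only your ``more direct'' generalized Moore argument, which uses $\gcd(s,m)=1$, actually proves that step.
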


\begin{proof}
	Write $\sigma=\sigma_s$.
	
	\emph{Equivalence of (1) and (2).}
	Put $d=\deg\gcd(g,\sigma(g))$.
	Lemma~\ref{lem:intersection} gives
	\[
	\dim_E(C_g(n)\cap\sigma(C_g(n)))
	=\max\{0,k-r+d\}.
	\]
	Since $k-1>0$, this dimension equals $k-1$
	if and only if $d=r-1$.
	Theorem~\ref{thm:HTM} therefore proves
	the equivalence.
	
	\emph{From (2) to (3).}
	By Corollary~\ref{cor:canonical-multichains},
	the MRD property excludes the Frobenius-fixed
	part of $g$, and its canonical chains satisfy
	\[
	\sum_\lambda\deg P_\lambda
	=r-\deg\gcd(g,\sigma(g))=1.
	\]
	Since every summand is a positive integer,
	there is exactly one chain, starting with
	a linear factor $X-\alpha$. Comparing degrees,
	we obtain
	\[
	g(X)=\prod_{i=0}^{r-1}
	(X-\sigma^i(\alpha)).
	\]
	The factors are distinct because the chain
	is shorter than the full $\sigma$-orbit.
	
	Since $\sigma$ generates
	$\operatorname{Gal}(E/\mathbb F_q)$,
	all these roots are $\mathbb F_q$-conjugates
	of $\alpha$. Hence $g$ divides the minimal
	polynomial $M_\alpha\in\mathbb F_q[X]$.
	If
	\[
	t=[\mathbb F_q(\alpha):\mathbb F_q]<n,
	\]
	then
	\[
	\deg(M_\alpha/g)=t-r<k.
	\]
	Thus $\coeff_n(M_\alpha)$ is a nonzero
	codeword of $C_g(n)$. Since
	$M_\alpha\in\mathbb F_q[X]$, this codeword
	has rank weight $1$, contradicting the
	MRD minimum distance $r+1$.
	Therefore $t\ge n$.
	
	\emph{From (3) to (2).}
	Let
	\[
	t=[\mathbb F_q(\alpha):\mathbb F_q]\ge n,
	\qquad
	v=(1,\alpha,\ldots,\alpha^{n-1}).
	\]
	The coordinates of $v$ are linearly
	independent over $\mathbb F_q$.
	Since $t\mid m$ and $\gcd(s,m)=1$,
	the $\sigma$-orbit of $\alpha$ also has
	length $t$. Hence the roots in
	\eqref{eq:root-chain} are distinct.
	
	Their evaluation matrix
	\[
	H=
	\begin{pmatrix}
		v\\
		\sigma(v)\\
		\vdots\\
		\sigma^{r-1}(v)
	\end{pmatrix}
	\]
	has rank $r$, since its first $r$
	columns form a nonsingular Vandermonde
	matrix. Thus $H$ is a parity-check
	matrix for $C_g(n)$.
	
	For every rank-$r$ matrix
	$A\in\mathbb F_q^{n\times r}$,
	the coordinates of $vA$ remain
	$\mathbb F_q$-linearly independent.
	Moreover, since $\sigma(A)=A$,
	\[
	HA=
	\begin{pmatrix}
		vA\\
		\sigma(vA)\\
		\vdots\\
		\sigma^{r-1}(vA)
	\end{pmatrix}.
	\]
	This is a nonsingular generalized Moore
	matrix. Proposition~\ref{prop:mrd-criterion}
	therefore proves that $C_g(n)$ is MRD.
	
	Finally, the root sets of $g$ and
	$\sigma(g)$ have exactly $r-1$ common
	elements, since their $\sigma$-orbit
	has length $t\ge n>r$.
	Consequently,
	\[
	\deg\gcd(g,\sigma(g))=r-1,
	\]
	which proves (2).
\end{proof}

\begin{remark}
	\label{cor:recognition}
	For $k\ge2$, Theorem~\ref{thm:root-classification}
	shows that an MRD polynomial-multiple code
	$C_g(n)$ is generalized Gabidulin if and only if
	\[
	\deg\gcd(g,\sigma_s(g))=r-1
	\]
	for some integer $s$ with $1\le s<m$ and $\gcd(s,m)=1$.
	This is a recognition criterion within the
	MRD family, not an independent test for the
	MRD property.
	
	The restriction $k\ge2$ is essential.
	Every one-dimensional MRD code is generalized
	Gabidulin, although its polynomial generator
	need not split over $E$.
	At the other boundary, $r=1$, we have
	$g=X-\alpha$, and $C_g(n)$ is MRD precisely when
	\[
	[\mathbb F_q(\alpha):\mathbb F_q]\ge n.
	\]
	See also~\cite[Theorem~5.1]{HTM}.
\end{remark}

We use the following elementary observation to
lift arithmetic progressions of Frobenius exponents
modulo $m$ to ordinary arithmetic progressions.

\begin{lemma}
	\label{lem:lift-modular-progression}
	Let $T\subseteq\{0,\ldots,L\}$ contain at least
	two elements, and suppose that $m>2L$.
	If the elements of $T$ can be arranged as an
	arithmetic progression modulo $m$, then $T$
	is an ordinary arithmetic progression in
	the integers.
	
	Moreover, if $e>0$ is its ordinary common
	difference, then the modular step is congruent
	to $e$ or $-e$ modulo $m$.
\end{lemma}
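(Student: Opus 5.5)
We need to prove: $T\subseteq\{0,\dots,L\}$, $|T|\ge2$, $m>2L$. If the elements of $T$ form an arithmetic progression modulo $m$, then $T$ is an ordinary arithmetic progression, and the modular step is $\pm e$.

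Write $|T|=r\ge2$ and suppose the elements are $x_0,x_1,\ldots,x_{r-1}$ with $x_i\equiv x_0+ic\pmod m$ for some step $c$. The elements are distinct integers in $[0,L]$, hence distinct modulo $m$ because $m>2L\ge L$; so $c\not\equiv0$. Choose the representative $c'\in(-m/2,m/2]$ of $c$. Each consecutive difference $x_{i+1}-x_i$ lies in $[-L,L]$ and is congruent to $c'$. Since $|x_{i+1}-x_i|\le L<m/2$, it is the unique representative of $c'$ in $(-m/2,m/2)$. Hence $x_{i+1}-x_i=c'$ exactly, for every $i$.

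The one point needing care is the endpoint $c'=m/2$. This case cannot occur, because $|x_{i+1}-x_i|\le L<m/2$ while that difference must be congruent to $m/2$. So $c'$ lies strictly inside the interval and the difference is forced.

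Therefore $x_i=x_0+ic'$ as integers. If $c'>0$, this ordering is increasing; if $c'<0$, reversing it is increasing with common difference $-c'$. In either case $T$ is an ordinary arithmetic progression with common difference $e=|c'|>0$, and the modular step satisfies $c\equiv c'=\pm e\pmod m$.

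The main obstacle is only this uniqueness-of-representative step, which is exactly where the hypothesis $m>2L$ enters.
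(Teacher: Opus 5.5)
Your proof is correct and follows essentially the same route as the paper. Both arguments use the fact that the consecutive differences lie in $[-L,L]$, and that $m>2L$ forces congruent integers in this range to coincide, so every difference equals one nonzero integer. The only cosmetic difference is that you identify that integer as the centered representative $c'\in(-m/2,m/2]$ of the modular step, whereas the paper compares the differences directly with one another.
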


\begin{proof}
	Let $t_1,\ldots,t_v$ be an ordering of $T$
	such that
	\[
	t_{i+1}-t_i\equiv s\pmod m
	\qquad(1\le i<v).
	\]
	Since $0\le t_i\le L$, all consecutive
	differences belong to $[-L,L]$.
	As $m>2L$, two integers in this interval
	that are congruent modulo $m$ must be equal.
	Hence
	\[
	t_{i+1}-t_i=d
	\qquad(1\le i<v)
	\]
	for some integer $d\ne0$.
	
	Thus $t_i=t_1+(i-1)d$, so the ordering is
	strictly increasing or decreasing.
	Consequently, $T$ is an ordinary arithmetic
	progression with positive step $e=|d|$.
	Since $d\equiv s\pmod m$, we also have
	$s\equiv\pm e\pmod m$, as claimed.
\end{proof}

\begin{corollary}
	\label{cor:non-gab-universal-profiles}
	Every admissible profile with $\delta_1\ge2$
	is realized by MRD polynomial-multiple codes
	that are not semilinearly rank equivalent
	to any generalized Gabidulin code over
	infinitely many extensions of $\mathbb F_q$.
\end{corollary}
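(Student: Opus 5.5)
Fix an admissible loss sequence with $\delta_1\ge2$, let $T=\{0=e_1<\cdots<e_r=L\}$ with $L=n-h-1$ be the exponent set from Theorem~\ref{thm:universal-profile-realization}, and take extension degrees $m$ that are prime and satisfy $m>\max\{B_T,2L\}$. There are infinitely many such primes. For each of them, Lemma~\ref{lem:fixed-pattern-mrd} and Theorem~\ref{thm:universal-profile-realization} show that, for any $\alpha$ of degree $m$, the code $C_{g_T}(n)$ is MRD with the prescribed profile. Since generalized Gabidulin codes with a fixed step are closed under semilinear rank equivalence, it remains to show that $C_{g_T}(n)$ is not generalized Gabidulin with any step $s$, $1\le s<m$, $\gcd(s,m)=1$. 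For prime $m$ every such $s$ is admissible. We may assume $k\ge2$: the assumption $\delta_1\ge2$ forces $k\ge2$ whenever $h\ge1$, since $k=\sum\delta_j\ge\delta_1$.

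Suppose, for contradiction, that $C_{g_T}(n)$ is generalized Gabidulin with step $s$. By Theorem~\ref{thm:root-classification}, (1)$\Rightarrow$(3), we have $g_T=\prod_{i=0}^{r-1}(X-\sigma_s^i(\beta))$ for some $\beta$. The roots $\alpha^{q^{e}}$, $e\in T$, are distinct points of the single Frobenius orbit of $\alpha$, which has length $m$. Hence $\beta=\alpha^{q^c}$ for some $c$, and the exponent set $T$ modulo $m$ coincides with $\{c+is:0\le i<r\}$. That is, $T$ can be ordered as an arithmetic progression modulo $m$.

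\emph{Case $r\ge2$.} Since $m>2L$, Lemma~\ref{lem:lift-modular-progression} shows that $T$ is an ordinary arithmetic progression with common difference $e$, where $s\equiv\pm e\pmod m$. Next we examine the run structure of $T$. The construction has $\delta_1$ occupied runs, the first of length $r-\delta_1+1$ and the rest singletons, separated by internal gaps. If $\delta_1\ge2$, some internal gap exists, so the consecutive differences of $T$ include at least one value exceeding $1$. For $T$ to be an arithmetic progression, all differences must then equal a common $e\ge2$. This forces the first run to have length $1$, so $\delta_1=r$, and all internal gaps have length $e-1$.

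This is the main obstacle: the root-run construction alone does not rule out such a progression. For instance, the profile $(r,\dots,r,1,\dots)$ of Lemma~\ref{lem:ap-profile-all-points} yields exactly an arithmetic progression. In that case the generator $\prod(X-\alpha^{q^{ie}})$ is a root chain for $\sigma_{s}$ with $s\equiv e$. When $\gcd(e,m)=1$, which holds for prime $m>e$, Theorem~\ref{thm:root-classification} then says the code is Gabidulin. So these profiles genuinely need a different exponent set.

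The fix is to replace $T$ whenever it is an arithmetic progression. Using the freedom noted in Step~3 of Theorem~\ref{thm:universal-profile-realization}, I would choose a non-progression set with the same multiset of internal gap lengths. If the gaps are unequal, the chosen $T$ is already not a progression. If all $\delta_1-1=r-1$ internal gaps have a common length $\ell$ and $r\ge3$, I would move one occupied point into the external region. This changes only gap lengths at least $h$, which are irrelevant to the profile because the increments $D_j-D_{j-1}$ for $j\le h$ only detect gaps of length $\ge j$, so the profile is preserved while $L$ grows slightly.

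\emph{Case $r=2$.} Any two-point set is a progression, so here I would instead argue directly through Theorem~\ref{thm:root-classification}(2). With $T=\{0,e\}$, $e\ge2$, we need $\deg\gcd(g,\sigma_s(g))\ne1$ for every $s$ coprime to $m$, but $s=e$ gives exactly $1$. So for $r=2$, $\delta_1=2$ I would instead use non-split or non-prime extension degrees: choose $m$ with $e\mid m$, so $s=e$ is not coprime to $m$, and check that only $s\equiv\pm e$ could work. Primality is then dropped in favour of $m$ a large multiple of $e$, still infinitely many.

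Once a non-progression $T$ (or suitable $m$) is secured, Lemma~\ref{lem:lift-modular-progression} yields the contradiction. I expect the main difficulty to be the bookkeeping of these degenerate equal-gap profiles.
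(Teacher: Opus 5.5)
Your argument is fine when $T$ is not an arithmetic progression, and your $r=2$ case is correct. The treatment of equal-gap progressions for $r\ge3$ fails, however. The claim that moving one occupied point into the external region ``changes only gap lengths at least $h$'' is false whenever the common internal gap length $\ell$ satisfies $\ell\le h-2$. Moving an endpoint lengthens an internal gap of length $\ell$, and moving an interior point merges two such gaps. Either way the number of gaps of length $\ge\ell+1$ goes from $1$ to $2$, so $\delta_{\ell+1}$ changes. For example, with $r=3$, $n=8$ and losses $(3,1,1)$, the construction gives $T=\{0,2,4\}$, while $\{0,2,5\}$ has losses $(3,2)$.

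More seriously, no replacement exponent set can exist. These profiles are exactly $(r^{(e-1)},1^{(a)})$ with $e=\ell+1$ and $a=h-\ell\ge2$. For them, Lemma~\ref{lem:ap-profile-all-points} shows that every MRD generator over any $\mathbb F_{q^m}$ with $m\ge n$ is $\prod_{i=0}^{r-1}(X-\beta^{q^{ie}})$, where $t=[\mathbb F_q(\beta):\mathbb F_q]$ satisfies $t\ge n$ and $t\mid m$. For prime $m$ this forces $t=m$ and $\gcd(e,m)=1$. Theorem~\ref{thm:root-classification}, (3)$\Rightarrow$(1) with $s=e$, then makes every polynomial-multiple MRD realization of such a profile over a prime-degree extension generalized Gabidulin. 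So you cite this lemma as showing these profiles need a different exponent set, but it shows the opposite.

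The missing idea is to vary the extension degree rather than the exponent set. You already do this for $r=2$, and that argument works verbatim for every $r$. Keep the constructed $T$; when it is an ordinary progression, its step is $e\ge2$ because $\delta_1\ge2$ forces an internal gap. Let $m$ run over multiples of $e$ exceeding $\max\{B_T,2L\}$. Lemma~\ref{lem:lift-modular-progression} then forces $s\equiv\pm e\pmod m$, so $\gcd(s,m)=e\ge2$ and no admissible step exists. This uniform case split (non-progression: any large $m$; progression: large multiples of $e$) is exactly the paper's proof. It makes both the prime-degree choice and the point-moving step unnecessary.
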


\begin{proof}
	The assumption $\delta_1\ge2$ implies $r,k\ge2$.
	Fix the exponent set
	$T=\{0=e_1<\cdots<e_r=L\}$ constructed in
	Theorem~\ref{thm:universal-profile-realization}.
	For every $m>\max\{B_T,2L\}$ and every
	$\alpha\in\mathbb F_{q^m}$ of degree $m$,
	the polynomial $g_{T,\alpha}$ generates an
	MRD code with the prescribed profile.
	
	Suppose that this code is generalized Gabidulin
	with some step $s$ satisfying $\gcd(s,m)=1$.
	By Theorem~\ref{thm:root-classification},
	its roots form a consecutive $q^s$-Frobenius
	chain. Since all roots belong to the
	$q$-Frobenius orbit of $\alpha$, the first
	root is $\alpha^{q^c}$ for some integer $c$.
	As $\alpha$ has degree $m$, equality of the
	root sets implies
	\[
	T=\{c,c+s,\ldots,c+(r-1)s\}
	\quad\text{in }\mathbb Z/m\mathbb Z.
	\]
	
	We distinguish two cases.
	
	If $T$ is not an ordinary arithmetic
	progression, Lemma~\ref{lem:lift-modular-progression}
	excludes such a modular progression whenever
	$m>2L$. Thus the resulting codes are not
	generalized Gabidulin for all sufficiently
	large $m$.
	
	Now suppose that $T$ is an ordinary arithmetic
	progression with positive step $e$.
	The construction in
	Theorem~\ref{thm:universal-profile-realization}
	has $\delta_1-1\ge1$ internal gaps, so
	$e\ge2$. Choose arbitrarily large multiples
	$m$ of $e$ satisfying $m>\max\{B_T,2L\}$.
	By Lemma~\ref{lem:lift-modular-progression},
	any possible modular step must satisfy
	$s\equiv\pm e\pmod m$. Hence
	\[
	\gcd(s,m)=\gcd(e,m)=e\ge2,
	\]
	contrary to the requirement $\gcd(s,m)=1$.
	
	Therefore, in either case, the prescribed
	profile has non-generalized-Gabidulin MRD
	realizations over infinitely many extension
	degrees. Since the class of generalized
	Gabidulin codes is preserved under semilinear
	rank equivalence, these codes are not
	equivalent to any generalized Gabidulin code.
\end{proof}

For the pattern $T=\{0,2,5\}$ of Example~\ref{ex:profile-construction},
every successful parameter of degree $m>10$ therefore gives a
non-generalized-Gabidulin code. In particular, the successful degree-$28$
parameters in Example~\ref{ex:binary-profile-budget} yield such
$[9,6,4]$ MRD codes over $\mathbb F_{2^{28}}$.

\paragraph{An extension-degree contrast.}
Take $q=2$, $n=6$, $r=2$, $k=4$, and $T=\{0,2\}$.
For $\alpha\in\mathbb F_{2^m}$ of degree $m\ge6$,
the polynomial $g_{T,\alpha}=(X-\alpha)(X-\alpha^4)$ has
intersection profile $(4,2,1,0)$, independently of the MRD
property. Here $\Delta_T=(k+1)(2^2-2)=10$, giving three
contrasting extension degrees:
\begin{itemize}[leftmargin=*,topsep=2pt,itemsep=1pt,parsep=0pt]
\item \emph{$m=10$:} No polynomial-multiple MRD code realizes
this profile, by Example~\ref{ex:forbidden-extension}.
\item \emph{$m=11$:} Every full-degree $\alpha$ gives an MRD code
by Theorem~\ref{thm:pattern-proportion}; it is generalized
Gabidulin with step $2$ by Theorem~\ref{thm:root-classification}.
\item \emph{$m=12$:} Every full-degree $\alpha$ again gives an
MRD code, but none is equivalent to a generalized Gabidulin
code. Indeed, Lemma~\ref{lem:lift-modular-progression} forces
any possible modular step to be $\pm2\pmod{12}$, neither of
which is coprime to $12$.
\end{itemize}
Thus the same intersection profile is shared by generalized
Gabidulin and non-generalized-Gabidulin MRD codes.

\begin{corollary}
	\label{cor:profile-count}
	Let $p_{\le r}(k)$ denote the number of
	partitions of $k$ whose largest part
	is at most $r=n-k$.
	
	Over every extension covered by
	Theorem~\ref{thm:main-realization},
	exactly $p_{\le r}(k)$ Frobenius
	intersection profiles occur among
	$[n,k]$ MRD polynomial-multiple codes.
	Consequently, there are at least
	$p_{\le r}(k)$ distinct semilinear
	rank-equivalence classes.
	
	If $r,k\ge2$, then over infinitely
	many extension degrees, all
	$p_{\le r}(k)-1$ profiles with
	$\delta_1\ge2$ can be realized
	simultaneously by pairwise inequivalent
	non-generalized-Gabidulin MRD codes.
\end{corollary}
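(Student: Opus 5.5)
The plan has three parts, taken in the order of the statement: first count the attainable profiles, then turn distinct profiles into distinct equivalence classes, and finally produce simultaneous non-generalized-Gabidulin realizations.

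\emph{Counting profiles.} By Proposition~\ref{prop:necessary-profile}, the positive losses of any $[n,k]$ MRD code form a partition of $k$ with largest part at most $r$. Conversely, a partition $\delta_1\ge\cdots\ge\delta_h$ of $k$ with $\delta_1\le r$ determines the full sequence $d_j=k-\sum_{i\le j}\delta_i$. Profiles therefore correspond bijectively to such partitions, so at most $p_{\le r}(k)$ of them can occur. For $m\ge M_{\mathrm{univ}}(q,n,r)$, Theorem~\ref{thm:main-realization} says that every admissible profile is realized by some MRD code $C_g(n)$ over $\F_{q^m}$. This is exactly the quantifier order in the definition of $M_{\mathrm{univ}}$, and it gives equality. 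Since each $d_j$ is a semilinear rank-equivalence invariant (Section~\ref{sec:rank-equivalence}), codes with different profiles lie in different equivalence classes. This yields at least $p_{\le r}(k)$ classes.

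\emph{The non-Gabidulin part.} Assume $r,k\ge2$. Only the partition $(1,\ldots,1)$ has $\delta_1=1$, and it is admissible, so exactly $p_{\le r}(k)-1$ partitions have $\delta_1\ge2$. For each such profile $\boldsymbol\delta$, take the exponent set $T_{\boldsymbol\delta}$ from Theorem~\ref{thm:universal-profile-realization}, with largest element $L_{\boldsymbol\delta}$. The goal is infinitely many $m$ that work for all these profiles at once. Let $N$ be the lcm of the common differences $e_{\boldsymbol\delta}\ge2$ of those $T_{\boldsymbol\delta}$ that happen to be arithmetic progressions; take $N=1$ if there are none. Choose
\[
m\equiv0\pmod{N},\qquad m>\max_{\boldsymbol\delta}\max\{B_{T_{\boldsymbol\delta}},2L_{\boldsymbol\delta}\},
\]
and note that there are infinitely many such $m$. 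Fix one $\alpha\in\F_{q^m}$ of degree $m$; it can also be chosen separately for each profile. Then Lemma~\ref{lem:fixed-pattern-mrd} and Theorem~\ref{thm:universal-profile-realization} make each $C_{g_{T_{\boldsymbol\delta}}}(n)$ an MRD code with profile $\boldsymbol\delta$.

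\emph{Excluding Gabidulin codes.} I would rerun the two-case argument of Corollary~\ref{cor:non-gab-universal-profiles}. If the code were generalized Gabidulin with some step $s$, $\gcd(s,m)=1$, then Theorem~\ref{thm:root-classification} would force $T_{\boldsymbol\delta}$ to be an arithmetic progression modulo $m$. Since $m>2L_{\boldsymbol\delta}$, Lemma~\ref{lem:lift-modular-progression} then applies. If $T_{\boldsymbol\delta}$ is not an ordinary progression, this is already a contradiction. If it is, the lemma gives $s\equiv\pm e_{\boldsymbol\delta}\pmod m$, and $e_{\boldsymbol\delta}\mid N\mid m$ forces $\gcd(s,m)\ge2$, again a contradiction.

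Because generalized Gabidulin classes are closed under semilinear equivalence, none of these codes is equivalent to a generalized Gabidulin code. They are pairwise inequivalent because their profiles differ. The main point needing care is this simultaneity step: the single proof of Corollary~\ref{cor:non-gab-universal-profiles} chooses $m$ per profile, and the lcm choice above is what makes one $m$ work for all profiles simultaneously.
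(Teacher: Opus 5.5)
Your proposal is correct and follows the paper's proof essentially step for step. It counts profiles via Proposition~\ref{prop:necessary-profile}, Theorem~\ref{thm:main-realization} and invariance of the $d_j$, then takes one $m$ as a large multiple of the lcm of the progression steps, with Theorem~\ref{thm:root-classification} and Lemma~\ref{lem:lift-modular-progression} excluding every coprime step. The one point you state without comment, that any arithmetic-progression $T_{\boldsymbol\delta}$ has step $e_{\boldsymbol\delta}\ge2$, follows as in the paper from the $\delta_1-1\ge1$ internal gaps in the construction.
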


\begin{proof}
	By Proposition~\ref{prop:necessary-profile},
	the positive loss sequences of MRD codes
	are partitions of $k$ whose largest part
	is at most $r$.
	Theorem~\ref{thm:main-realization}
	realizes every such partition over each
	extension covered by its bound.
	Thus exactly $p_{\le r}(k)$ profiles occur.
	Since Frobenius intersection profiles are
	invariant under semilinear rank equivalence
	(Section~\ref{sec:rank-equivalence}),
	distinct profiles yield inequivalent codes.
	
	Now suppose that $r,k\ge2$.
	Exactly one admissible loss sequence has
	$\delta_1=1$, namely $(1,\ldots,1)$.
	For each of the remaining
	$p_{\le r}(k)-1$ profiles, fix an exponent
	set $T$ constructed in
	Theorem~\ref{thm:universal-profile-realization}.
	Since there are only finitely many such
	profiles, only finitely many exponent sets
	need to be considered.
	
	Let $M$ be the least common multiple of
	the positive steps $e\ge2$ of those
	sets $T$ that are ordinary arithmetic
	progressions, with $M=1$ if there are none.
	The construction ensures that
	$\delta_1\ge2$ gives at least one internal
	gap, so every such progression has step
	$e\ge2$.
	
	Choose any sufficiently large multiple
	$m$ of $M$ such that
	\[
	m>\max_T\{B_T,2\max T\}.
	\]
	By Lemma~\ref{lem:fixed-pattern-mrd},
	every degree-$m$ parameter
	$\alpha\in\mathbb F_{q^m}$ yields an MRD
	code with the prescribed profile for
	each chosen exponent set $T$.
	
	We show that none of these codes is
	generalized Gabidulin.
	If $T$ is not an ordinary arithmetic
	progression, this follows from
	Lemma~\ref{lem:lift-modular-progression},
	since $m>2\max T$.
	Otherwise, $T$ has an ordinary step
	$e\ge2$ dividing $m$.
	The same lemma shows that any possible
	modular progression step $s$ satisfies
	$s\equiv\pm e\pmod m$, and hence
	\[
	\gcd(s,m)=e\ge2.
	\]
	Thus no admissible generalized Gabidulin
	step exists.
	
	Since the generalized Gabidulin class is preserved under
	semilinear rank equivalence, the resulting codes are not
	semilinearly equivalent to generalized Gabidulin codes.
	Moreover, they are pairwise inequivalent because their
	Frobenius intersection profiles are distinct.
	
	Arbitrarily large multiples of $M$ satisfy
	the required bound, proving the simultaneous
	realization over infinitely many extensions.
\end{proof}

\begin{example}
\label{ex:same-profile-inequivalent}
Let $n=12$, $r=4$, and $k=8$, and take
\[
T_1=\{0,2,5,9\},\qquad T_2=\{0,2,6,9\}.
\]
For every integer $m>\max\{B_{T_1},B_{T_2},18\}$, choose
$\alpha\in\mathbb{F}_{q^m}$ of degree $m$ over
$\mathbb{F}_q$ and set
\[
g_i(X)=\prod_{a\in T_i}(X-\alpha^{q^a}),
\qquad C_i=C_{g_i}(12)\quad(i=1,2).
\]
Both codes are MRD by Lemma~\ref{lem:fixed-pattern-mrd}.
The internal gaps of $T_1$ and $T_2$ have lengths
$(1,2,3)$ and $(1,3,2)$, respectively, and both external
gaps have length $m-10$. Hence Theorem~\ref{thm:orbit-gap-formula}
gives, for every $j\ge0$,
\[
\begin{aligned}
D_j(g_1)=D_j(g_2)
={}&4+\min\{j,1\}+\min\{j,2\}\\
 &+\min\{j,3\}+\min\{j,m-10\}.
\end{aligned}
\]
In particular, both codes have the consecutive Frobenius
intersection profile $(8,4,1,0,\ldots)$.
Here $D_3(g_1)=D_3(g_2)=13>n=12$, so
$d_3(C_1)=d_3(C_2)=\max\{0,12-13\}=0$.
Thus the zero at $j=3$ is obtained by truncation.

Nevertheless, $T_1\cap(T_1+5)=\{5\}$ whereas
$T_2\cap(T_2+5)=\varnothing$, with shifts taken modulo $m$.
Since $\sigma^5(g_i)$ has root-exponent set $T_i+5$,
Lemma~\ref{lem:intersection} yields
\[
\begin{aligned}
\dim_E(C_1\cap\sigma^5 C_1)&=4+1=5,\\
\dim_E(C_2\cap\sigma^5 C_2)&=4+0=4.
\end{aligned}
\]
These dimensions are invariant under semilinear rank
equivalence: if $C'=\tau(C)A$, where $\tau$ is a field
automorphism and $A\in\mathrm{GL}_n(q)$, then
\[
C'\cap\sigma^5(C')=\tau(C\cap\sigma^5(C))A.
\]
Thus $C_1$ and $C_2$ are not semilinearly rank equivalent.
Moreover, neither exponent set is an ordinary arithmetic
progression. By Lemma~\ref{lem:lift-modular-progression}
and Theorem~\ref{thm:root-classification}, both codes are
non-generalized-Gabidulin under the stated bound on $m$.

Thus, even within the polynomial-multiple family,
the complete degree hierarchy $(D_j)$ does not determine
the semilinear rank-equivalence class. Since
$p_{\le4}(8)=15$, combining this pair with the
other admissible profiles yields at least $16$ distinct
semilinear rank-equivalence classes of $[12,8]$
polynomial-multiple MRD codes over every sufficiently
large extension.
\end{example}

\begin{remark}
	The number $p_{\le r}(k)$ counts
	Frobenius intersection profiles exactly,
	but gives only a lower bound on the
	number of semilinear rank-equivalence
	classes. The preceding example shows that this
	lower bound can be strict.
	
	For arbitrary admissible profiles with $\delta_1\ge2$,
	the non-generalized-Gabidulin realizations above are
	guaranteed over infinitely many extension degrees. The
	argument does not claim realization over every sufficiently
	large extension degree.
	
	Finally, the condition $\delta_1\ge2$
	for ordinary $q$-Frobenius alone does
	not rule out generalized Gabidulin
	codes with other admissible steps.
	The argument above excludes all steps
	$s$ coprime to $m$.
\end{remark}

\section{Conclusion}
\label{sec:conclusion}

We established that every numerically admissible Frobenius intersection profile is realized by a polynomial-multiple MRD code over every sufficiently large finite extension. We derived a factor-orbit gap formula and obtained effective upper and lower bounds for the every-extension threshold, which differ by only $O_q(\log n)$ along an infinite family of parameters. We also characterized the generalized Gabidulin members of this
family and established non-generalized-Gabidulin realizations of
every profile with initial loss at least two over infinitely many
extension degrees. Finally, we exhibited semilinearly rank-inequivalent MRD codes
whose polynomial generators have identical complete degree
hierarchies $(D_j)_{j\ge0}$.

\section*{Funding}
This work was supported by the National Natural Science Foundation of China (grant numbers 12571003 and 12501006) and the Basic and Applied Basic Research Foundation of Guangdong Province (grant number 2024A1515010589).

\section*{Data Availability}
No datasets were generated or analysed during the current study.

\section*{Competing Interests}
The authors declare no competing interests.

\section*{AI Use Declaration}
During the preparation and revision of this manuscript, the authors used
generative AI tools for language editing and polishing, literature
searches, comparison with related work for assessing the novelty and
positioning of the manuscript, and the construction of examples presented
in the paper. The research questions, mathematical motivation, proofs of
the results, and the overall structure and organization of the manuscript
were developed by the authors. All AI-assisted references, comparisons,
and examples were independently checked and verified by the authors. The
authors take full responsibility for the accuracy and content of the
manuscript.


\begingroup
\small
\begin{thebibliography}{99}

\bibitem{NPH2020}A. Neri, S. Puchinger and A.-L. Horlemann-Trautmann,
Equivalence and characterizations of linear rank-metric codes based on invariants,
Linear Algebra and its Applications \textbf{603} (2020), 418--469.
\href{https://doi.org/10.1016/j.laa.2020.06.014}{doi:10.1016/j.laa.2020.06.014}.

\bibitem{Delsarte1978}P. Delsarte,
Bilinear forms over a finite field, with applications to coding theory,
Journal of Combinatorial Theory, Series A \textbf{25} (1978), 226--241.
\href{https://doi.org/10.1016/0097-3165(78)90015-8}{doi:10.1016/0097-3165(78)90015-8}.

\bibitem{Gabidulin1985}E.~M. Gabidulin,
Theory of codes with maximum rank distance,
Problems of Information Transmission \textbf{21} (1985), 1--12.

\bibitem{KshevetskiyGabidulin2005}A. Kshevetskiy and E. Gabidulin,
The new construction of rank codes,
in: Proceedings of the 2005 IEEE International Symposium on Information
Theory, IEEE, 2005, pp.~2105--2108.
\href{https://doi.org/10.1109/ISIT.2005.1523717}{doi:10.1109/ISIT.2005.1523717}.

\bibitem{BartoliZhou2020}D. Bartoli and Y. Zhou,
Asymptotics of Moore exponent sets,
Journal of Combinatorial Theory, Series A \textbf{175} (2020), 105281.
\href{https://doi.org/10.1016/j.jcta.2020.105281}{doi:10.1016/j.jcta.2020.105281}.

\bibitem{NeriEtAl2018}A. Neri, A.-L. Horlemann-Trautmann,
T. Randrianarisoa and J. Rosenthal,
On the genericity of maximum rank distance and Gabidulin codes,
Designs, Codes and Cryptography \textbf{86} (2018), 341--363.
\href{https://doi.org/10.1007/s10623-017-0354-4}{doi:10.1007/s10623-017-0354-4}.

\bibitem{Sheekey2016}J. Sheekey,
A new family of linear maximum rank distance codes,
Advances in Mathematics of Communications \textbf{10} (2016), 475--488.
\href{https://doi.org/10.3934/amc.2016019}{doi:10.3934/amc.2016019}.

\bibitem{HTM}A.-L. Horlemann-Trautmann and K. Marshall,
New criteria for MRD and Gabidulin codes and some rank-metric code
constructions,
Advances in Mathematics of Communications \textbf{11} (2017), 533--548.
\href{https://doi.org/10.3934/amc.2017042}{doi:10.3934/amc.2017042}.

\end{thebibliography}
\endgroup
\end{document}